\newtheorem{theorem}{Theorem}
\newtheorem{lemma}[theorem]{Lemma}
\newtheorem{corollary}[theorem]{Corollary}
\newtheorem*{thma}{Theorem A}
\newtheorem*{thmb}{Theorem B}
\newtheorem*{thmc}{Theorem C}
\newtheorem*{thmd}{Theorem D}
\newtheorem*{thme}{Theorem E}
\newcommand{\R}{{\mathbb R}}
\newcommand{\C}{{\mathbb C}}
\newcommand{\T}{{\mathbb T}}
\newcommand{\D}{{\mathbb D}}
\newcommand{\inr}{\int_{\R}}
\newcommand{\inc}{\int_{\C}}
\newcommand{\F}{\mathcal{F}}
\DeclareMathOperator{\re}{{\rm Re}}
\DeclareMathOperator{\im}{{\rm Im}}
\begin{document}

\title{Canonical integral operators on the Fock space}

\author{Xingtang Dong}
\address{School of Mathematics, Tianjin University, Tianjin 300350, China.}
\email{dongxingtang@163.com; dongxingtang@tju.edu.cn}

\author{Kehe Zhu}
\address{Department of Mathematics and Statistics, SUNY, Albany, NY 12222, USA.}
\email{kzhu@albany.edu}

\subjclass[2010]{Primary 30H20; Secondary 47G10.}

\keywords{Fock space, linear canonical transforms, Bargmann transform, general linear group,
special linear group.}

\thanks{\noindent Research of Dong was supported in part by the Natural Science Foundation of
Tianjin City of China (Grant No. 19JCQNJC14700). Research of Zhu was supported by NNSF of
China (Grant No. 11720101003).}

\begin{abstract}
In this paper we introduce and study a two-parameter family of integral operators on the Fock space
$F^2(\C)$. We determine exactly when these operators are bounded and when they are unitary.
We show that, under the Bargmann transform, these operators include the classical linear canonical
transforms as special cases. As an application, we obtain a new unitary projective representation
for the special linear group $SL(2,\R)$ on the Fock space.
\end{abstract}

\maketitle

\section{Introduction}

Let $\R$ denote the real line and $\C$ denote the complex plane. We are going to study operator theory on
three particular Hilbert spaces: the Lebesgue space $L^2(\R, dx)$, the weighted Lebesgue space
$L^2(\C, d\lambda)$, and the Fock space
$$F^2=L^2(\C, d\lambda)\cap H(\C),$$
where $H(\C)$ is the space of all entire functions on $\C$ and
$$d\lambda(z)=\frac{1}{\pi}e^{-|z|^2}\,dA(z)$$
is the Gaussian measure on $\C$. Here $dA$ is ordinary area measure. The inner product on $F^2$ is
inherited from $L^2(\C,d\lambda)$.

The Fock space has its origin in mathematical physics. The mathematical theory of Fock spaces has also
experienced a rapid development over the past few decades. In particular, starting from the 1980s, Berger
and Coburn systematically studied Toeplitz and Hankel operators on the Fock space; see \cite{BC1, BC2, BC3}
for example. On the other hand, motivated by problems from engineering, Seip and Wallsten completely
characterized the so-called interpolating and sampling sequences for Fock spaces in \cite{Seip, SW}.
See \cite{Zhu1} for a recent survey of the mathematical theory of Fock spaces.

Although all separable infinite-dimensional Hilbert spaces are isomorphic, there is a particular unitary
transformation $B$ from the Lebesgue space $L^2(\R, dx)$ onto the Fock space $F^2$ that is very
natural and extremely useful. This is the Bargmann transform, which enables us to translate operators
on $L^2(\R, dx)$ to operators on $F^2$, and vice versa. A prominent example is the Fourier transform
$\F$ as a bounded operator on $L^2(\R, dx)$, which, under the Bargmann transform, becomes the
operator $f(z)\mapsto f(iz)$ on $F^2$. It is clear that the latter form of the Fourier transform on $F^2$
has a much better structure (at least on surface) than the original form on $L^2(\R, dx)$. This example
illustrates the enormous potential of the Bargmann transform in operator theory.

Another important example is the Hilbert transform $H$ as a bounded operator on $L^2(\R, dx)$. Under
the Bargmann transform, it becomes the following integral operator on $F^2$:
$$Hf(z)=\inc e^{z\overline w}\varphi(z-\overline w)f(w)\,d\lambda(w),$$
where $\varphi$ is a particular function in $F^2$ which can be written down explicitly. Motivated by this
example, the second author here raised the following question in \cite{Zhu2}: characterize all functions
$\varphi\in F^2$ such that the integral operator above is bounded on $F^2$. A beautiful answer to this
question was given in \cite{CLS}, and more related work was done in \cite{WW}. See \cite{Zhu3} for
many other examples of operators on $L^2(\R, dx)$ and their counterparts, under the Bargmann transform,
on $F^2$.

In this paper, we introduce and study a two-parameter family of integral operators on
$F^2$. More specifically, for any $(s,t)\in\C^2$ with $s\not=0$, we define
$$T^{(s,t)}f(z)=\inc K^{(s,t)}(z,w)f(w)\,d\lambda(w),$$
where
$$K^{(s,t)}(z,w)=\frac1{\sqrt s}\,\exp\left[\frac{tz^2-\overline{tw^2}+2z\overline w}{2s}\right].$$
Here and throughout the paper, the complex square root is defined as follows:
$$\sqrt z=\sqrt{|z|}\,e^{i\theta/2},\qquad z=|z|e^{i\theta},\qquad \theta\in(-\pi, \pi].$$
It is clear that $T^{(s,t)}f$, whenever well-defined, is an entire function. Therefore, when $T^{(s,t)}$
is a bounded operator on $L^2(\C, d\lambda)$, it will also be a bounded operator on $F^2$.
This simple observation will be used many times later without being explicitly mentioned again.

If $f$ is continuous on $\C$ with compact support, it is easy to see that $T^{(s,t)}f$ is a well-defined
entire function for any $s\not=0$. Therefore, $T^{(s,t)}: L^2(\C, d\lambda)\to H(\C)$ is densely defined for
any $s\not=0$. However, when $|t|\ge2|s|$, it is not clear if there is any nonzero function $f\in F^2$
such that the integral defining $T^{(s,t)}f$ is convergent. In particular, for $|t|\ge2|s|$, the integral
for $T^{(s,t)}f$ is divergent for all nonzero polynomials $f$ and for all finite linear combinations of 
(ordinary) kernel functions. Therefore, when considering $T^{(s,t)}$ as operators on $F^2$, we will make 
the natural assumption that $|t|<2|s|$.

Our main results are Theorems A, B, C, D, and E below.

\begin{thma} 
The operators $T^{(s,t)}$ on $F^2$, where $|t|<2|s|$, have the following properties.
\begin{itemize}
\item[(i)] $T^{(s,t)}$ is bounded if and only if $|s|^2\ge|t|^2+1$.
\item[(ii)] $T^{(s,t)}$ is unitary if and only if $|s|^2=|t|^2+1$.
\item[(iii)] When $|s|^2>|t|^2+1$, the operator $T^{(s,t)}$ belongs to the Hilbert-Schmidt class $S_2$ with
$$\|T^{(s,t)}\|^2_{S_2}=\frac{|s|}{\sqrt{|s|^2-|t|^2-1}}.$$
\end{itemize}
\end{thma}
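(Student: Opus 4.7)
The central tool is a closed-form expression for $T^{(s,t)}$ acting on the reproducing kernels $K_a(z)=e^{z\bar a}$. Since $K^{(s,t)}(z,w)$ is holomorphic in $z$ and antiholomorphic in $w$, the reproducing identity (equivalently, a direct $2$-D Gaussian integration in $w$) yields
$$T^{(s,t)}K_a(z)=K^{(s,t)}(z,a)=\frac{1}{\sqrt s}\exp\!\left[\frac{tz^2+2z\bar a-\bar t\bar a^2}{2s}\right].$$
Because $\{K_a:a\in\C\}$ is total in $F^2$ with $\|K_a\|^2=e^{|a|^2}$, every assertion in Theorem~A reduces via this formula to a Gaussian integral in two or four real dimensions.

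For (ii), I would compute $\langle T^{(s,t)}K_a,T^{(s,t)}K_b\rangle$ as a Gaussian integral in $z$. The real part of the associated quadratic form in $(\re z,\im z)$ has determinant $(|s|^2-|t|^2)/|s|^2$, so the integral converges iff $|t|<|s|$; completion of squares then produces an explicit exponential in $a,\bar a,b,\bar b$. On the locus $|s|^2=|t|^2+1$ the coefficients simplify to exactly match $\langle K_a,K_b\rangle=e^{b\bar a}$, so $T^{(s,t)}$ preserves inner products on the dense span of $\{K_a\}$ and extends to an isometry on $F^2$. To upgrade isometry to unitarity, conjugating the kernel shows $\overline{K^{(s,t)}(z,w)}=K^{(\bar s,-t)}(w,z)$ (up to a unimodular sign from the branch of $\sqrt{\,\cdot\,}$ when $s$ lies on the negative real axis), hence $T^{(s,t)*}=T^{(\bar s,-t)}$ up to that factor. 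Since $(\bar s,-t)$ also satisfies $|\bar s|^2=|-t|^2+1$, the same argument makes $T^{(s,t)*}$ isometric, so $T^{(s,t)}T^{(s,t)*}=I$ and $T^{(s,t)}$ is unitary.

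Necessity in (i) is read off the same kernel formula. If $|t|\ge|s|$, then $T^{(s,t)}K_a$ has Gaussian growth reaching or exceeding $e^{|z|^2/2}$ and fails to lie in $F^2$; if $|t|<|s|<\sqrt{|t|^2+1}$, then the coefficient of $|a|^2$ in the exponent of $\|T^{(s,t)}K_a\|^2/\|K_a\|^2$ is strictly positive in every direction of $a$, forcing $\|T^{(s,t)}K_a\|/\|K_a\|\to\infty$. In either case $T^{(s,t)}$ is unbounded. Sufficiency follows from (ii) at the boundary $|s|^2=|t|^2+1$ and from (iii) in the interior $|s|^2>|t|^2+1$, since a Hilbert--Schmidt operator is bounded.

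For (iii), the kernel $K^{(s,t)}(z,w)$ lies in the Hilbert tensor product $F^2\otimes\overline{F^2}$ whenever the underlying integral converges, so
$$\|T^{(s,t)}\|_{S_2}^2=\inc\inc|K^{(s,t)}(z,w)|^2\,d\lambda(z)\,d\lambda(w).$$
This is a four-dimensional Gaussian integral in the real coordinates of $z$ and $w$; after grouping the cross terms $z^2$, $\bar w^2$, and $z\bar w$ into the associated real symmetric matrix, one checks that the matrix is positive definite precisely when $|s|^2>|t|^2+1$, and its determinant produces the stated closed-form value. The main technical obstacle throughout is the bookkeeping in these Gaussian integrals --- tracking complex coefficients of the cross terms and the square-root branch in $1/\sqrt s$ --- but no conceptual difficulty arises once the formula for $T^{(s,t)}K_a$ is in hand.
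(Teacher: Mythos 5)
Your proposal is correct in substance and, for parts (i) and (iii), follows essentially the same path as the paper: unboundedness for $|s|\le|t|<2|s|$ is detected on a single function (the paper uses $f=1$, i.e.\ your $a=0$), unboundedness for $|t|<|s|<\sqrt{|t|^2+1}$ comes from the growth of $\|T^{(s,t)}K_a\|/\|K_a\|$ (your claim that the coefficient is positive in every direction reduces to the inequality $|t|<|s|$ and is correct; it renders the paper's special choice of direction unnecessary), and the Hilbert--Schmidt claim is the same computation of $\inc\inc|K^{(s,t)}(z,w)|^2\,d\lambda(z)\,d\lambda(w)$. Where you genuinely diverge is part (ii): you verify $\langle T^{(s,t)}K_a,T^{(s,t)}K_b\rangle=\langle K_a,K_b\rangle$ directly on the locus $|s|^2=|t|^2+1$, obtaining an isometry on the span of kernels, and then run the same argument for the adjoint $cT^{(\bar s,-t)}$; the paper instead first proves boundedness on all of $L^2(\C,d\lambda)$ by Schur's test with $h(z)=e^{|z|^2/4}$ and then exhibits $cT^{(\bar s,-t)}$ as a two-sided inverse on kernel functions. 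Your route is shorter and avoids Schur's test, but three small items should be made explicit. First, the isometric extension from the span of kernels must be identified with the integral operator on all of $F^2$; this follows from the pointwise bound $|T^{(s,t)}f(z)|\le\|f\|\,\|K_z^{(\bar s,-t)}\|$ (so norm convergence forces pointwise convergence of the images), a step the paper gets for free from Schur's test. Second, the ``only if'' half of (ii) is not addressed; it follows from (i) together with (iii), since for $|s|^2>|t|^2+1$ the operator is compact and hence cannot be unitary on the infinite-dimensional space $F^2$. Third, if you actually carry out the Gaussian integral in (iii) you will find $\|T^{(s,t)}\|_{S_2}^2=|s|/(|s|^2-|t|^2-1)$, not the displayed $|s|/\sqrt{|s|^2-|t|^2-1}$; the formula in the statement of Theorem~A is a misprint, and the value consistent with the paper's own restatement in Theorem~\ref{thmTB} is the one without the square root.
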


Our next result concerns the general and special real linear groups of order $2$. Let us write
$$GL(\C\times\C)=\{(s,t)\in\C^2: |s|\not=|t|\}$$
and
$$SL(\C\times\C)=\{(s,t)\in\C^2: |s|^2=|t|^2+1\}.$$
We show that these are the complex versions of the general and special real linear groups, respectively.
We also show that the special linear group of order $2$ can be represented as unitary operators on the
Fock space.

\begin{thmb}
Let $GL(2,\R)$ denote the general real linear group and $SL(2, \R)$ the special real linear group of
order $2$.
\begin{itemize}
\item[(i)] The set $GL(\C\times\C)$ is a group with the operation
$$(s_1, t_1)\cdot(s_2,t_2)=(s_1s_2+\overline t_1t_2, t_1s_2+\overline s_1t_2).$$
Moreover, the set $SL(\C\times\C)$ is a subgroup of $GL(\C\times\C)$.
\item[(ii)] The mapping $\varphi: GL(2, \R)\to GL(\C\times\C)$ defined by
$$\varphi\begin{bmatrix} a & b\\ c & d\end{bmatrix}=\left(\frac{a+ib+d-ic}2,\,\frac{a+ib-d+ic}2\right)$$
is a group isomorphism. Moreover, $\varphi$ maps $SL(2, \R)$ onto $SL(\C\times\C)$.
\item[(iii)] The mapping $(s,t)\mapsto T^{(s,t)}$ is a unitary projective representation of the group
$SL(\C\times\C)$ on $F^2$.
\end{itemize}
\end{thmb}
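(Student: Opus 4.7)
Plan. The key observation that organizes everything is that the operation on $\C^2$ is realized by ordinary $2\times 2$ matrix multiplication through the correspondence
$$(s,t)\longleftrightarrow M(s,t):=\begin{bmatrix} s & \overline{t}\\ t & \overline{s}\end{bmatrix},$$
since $M(s_1,t_1)M(s_2,t_2)=M(s_1s_2+\overline{t_1}t_2,\,t_1s_2+\overline{s_1}t_2)$ and $\det M(s,t)=|s|^2-|t|^2$. Part (i) then follows quickly: associativity is inherited from matrix multiplication, $(1,0)$ is the identity since $M(1,0)=I$, the inverse of $(s,t)\in GL(\C\times\C)$ is $(\overline{s}/\Delta,\,-t/\Delta)$ with $\Delta=|s|^2-|t|^2$ (read off from $M(s,t)^{-1}$), and the multiplicativity of the determinant gives the Brahmagupta--Fibonacci-type identity
$$(|s_1|^2-|t_1|^2)(|s_2|^2-|t_2|^2)=|s_1s_2+\overline{t_1}t_2|^2-|t_1s_2+\overline{s_1}t_2|^2,$$
proving closure of both $GL(\C\times\C)$ and $SL(\C\times\C)$.

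For (ii), let $U=\tfrac1{\sqrt2}\begin{bmatrix}1 & -i\\ 1 & i\end{bmatrix}$, a unitary matrix. A direct computation shows that for any real $A=\begin{bmatrix} a & b\\ c & d\end{bmatrix}$,
$$UAU^*=M\Bigl(\tfrac{a+ib+d-ic}2,\,\tfrac{a+ib-d+ic}2\Bigr),$$
which is exactly $M(\varphi(A))$. Hence $\varphi$ is the composition of the ring homomorphism $A\mapsto UAU^*$ with the identification $M(s,t)\leftrightarrow(s,t)$, so it is a group homomorphism. For surjectivity I would verify that
$$U^*M(s,t)U=\begin{bmatrix}\re(s+t) & \im(s+t)\\ -\im(s-t) & \re(s-t)\end{bmatrix}$$
is always a real matrix, providing the inverse map. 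Finally, $\det A=\det(UAU^*)=|s|^2-|t|^2$, so $\varphi$ sends $SL(2,\R)$ bijectively onto $SL(\C\times\C)$.

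For (iii), each $T^{(s,t)}$ with $(s,t)\in SL(\C\times\C)$ is unitary by Theorem A(ii), so the task is to exhibit a unimodular cocycle $\epsilon$ satisfying $T^{(s_1,t_1)}T^{(s_2,t_2)}=\epsilon\,T^{(s_1,t_1)\cdot(s_2,t_2)}$. By Fubini the composition has integral kernel
$$\frac1{\sqrt{s_1s_2}}\int_{\C}\exp\Bigl[\tfrac{t_1z^2}{2s_1}-\tfrac{\overline{t_2}\overline w^2}{2s_2}+\tfrac{t_2}{2s_2}u^2-\tfrac{\overline{t_1}}{2s_1}\overline u^2+\tfrac{\overline w}{s_2}u+\tfrac{z}{s_1}\overline u\Bigr]\,d\lambda(u),$$
which I would evaluate using the standard complex Gaussian identity
$$\int_\C e^{au+b\overline u+cu^2+d\overline u^2}\,d\lambda(u)=\frac{1}{\sqrt{1-4cd}}\,\exp\Bigl[\frac{ab+da^2+cb^2}{1-4cd}\Bigr]$$
with $a=\overline w/s_2$, $b=z/s_1$, $c=t_2/(2s_2)$, $d=-\overline{t_1}/(2s_1)$; this yields $1-4cd=(s_1s_2+\overline{t_1}t_2)/(s_1s_2)$, and absolute convergence is ensured by $|s_j|^2=|t_j|^2+1$, which forces $|4cd|=|t_1t_2|/|s_1s_2|<1$. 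The main obstacle is then the algebraic collapse: pooling all exponential contributions and substituting $|s_j|^2-|t_j|^2=1$, the coefficients of $z^2$, $\overline w^2$, and $z\overline w$ in the resulting exponent simplify respectively to $t/(2s)$, $-\overline t/(2s)$, and $1/s$, where $(s,t)=(s_1,t_1)\cdot(s_2,t_2)$. The integrated kernel therefore equals $K^{(s,t)}(z,w)$ times the scalar $\sqrt{s_1s_2}/(\sqrt{s_1}\sqrt{s_2})\in\{\pm1\}$, which is the desired cocycle $\epsilon$.
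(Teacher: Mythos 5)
Your proof is correct, and for parts (i) and (ii) it takes a genuinely different and more conceptual route than the paper. The paper disposes of (i) as ``routine'' and proves (ii) by expanding $\varphi(A_1A_2)$ entry by entry; you instead realize the operation as honest matrix multiplication via $(s,t)\mapsto M(s,t)=\begin{bmatrix} s & \overline t\\ t & \overline s\end{bmatrix}$ and exhibit $\varphi$ as the conjugation $A\mapsto UAU^*$ with $U=\tfrac1{\sqrt2}\begin{bmatrix}1 & -i\\ 1 & i\end{bmatrix}$ --- I verified that $UAU^*=M(\varphi(A))$ does hold. This buys associativity, the identity, the inverse formula, closure, the determinant identity $|s|^2-|t|^2=ad-bc$, and the homomorphism property all in one stroke, and it makes visible the classical isomorphism $SL(2,\R)\cong SU(1,1)$ that is implicit but hidden in the paper's computation; the paper's argument is more pedestrian but self-contained at the level of $2\times2$ arithmetic. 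For part (iii) your route is essentially the paper's: Lemma~\ref{lem2} evaluates the same Gaussian integral for general $(s_k,t_k)$ with $|s_k|>|t_k|$ and records the extra exponential factor carrying $|t_k|^2+1-|s_k|^2$, which your ``algebraic collapse'' kills precisely on $SL(\C\times\C)$, and Theorem~\ref{thmT} then concludes exactly as you do. One small inaccuracy to fix: absolute convergence of the Gaussian integral $\inc e^{au+b\overline u+cu^2+d\overline u^2}\,d\lambda(u)$ is governed by $|c+\overline d|<1$ (the paper's hypothesis $|\gamma+\delta|^2<4$ in \eqref{eqIC}), not by $|4cd|<1$; the needed bound does hold here since $|c|+|d|=\frac{|t_2|}{2|s_2|}+\frac{|t_1|}{2|s_1|}<1$, so your conclusion stands, but the criterion you cite is not the right one. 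You should also say a word justifying the interchange of integrals defining the composed kernel (the paper does this via Lemma~\ref{lem1} and H\"older), though this is at the same level of detail the paper itself adopts.
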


Recall from mathematical physics that, for every matrix $A=\begin{bmatrix} a & b\\ c & d\end{bmatrix}$
in $SL(2, \R)$, the linear canonical transform
$$\F^A: L^2(\R, dx)\to L^2(\R, dx)$$
is defined by
$$\F^Af(x)=\frac1{\sqrt{i\pi b}}\,e^{idx^2/b}\inr e^{-i(2xt-at^2)/b}f(t)\,dt$$
when $b\not=0$. If $b=0$, then $\F^A$ is defined by
$$\F^Af(x)=\sqrt d\,e^{icdx^2}\,f(dx).$$
Note that $d$ here is a real number but not the differential. Also, the parameter $c$ does not appear in
the definition of $\F^Af$ for $b\not=0$ above because the four parameters are subject to the
condition $ad-bc=1$, so $c=(ad-1)/b$ in this case.

The linear canonical transforms are widely used in science, engineering, and mathematical physics.
Special forms of them include fractional Fourier transforms, scaling (or dilation), the Fresnel transform,
and chirp multiplication. See \cite{BM2, HKOS, OZK, W} for more information about linear canonical
transforms.

\begin{thmc}
Under the Bargmann transform, the linear canonical transforms become the operators $T^{(s,t)}$ on
$F^2$, where $(s,t)\in SL(\C\times\C)$.
\end{thmc}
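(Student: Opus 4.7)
The goal is to verify the operator identity $B\F^A B^{-1} = T^{(s,t)}$ on $F^2$ for every $A = \begin{bmatrix} a & b \\ c & d \end{bmatrix} \in SL(2,\R)$, where $(s,t) = \varphi(A)$ is the pair produced in Theorem B. The proof will proceed by direct computation, handling the two cases $b \neq 0$ and $b = 0$ separately.

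First I would treat the generic case $b \neq 0$. Recall that the Bargmann transform $B$ and its inverse $B^{-1}$ are integral operators with explicit Gaussian kernels. Writing $B\F^A B^{-1} f(z)$ as an iterated integral (one over $\C$ coming from $B^{-1}$, and two over $\R$ coming from $\F^A$ followed by $B$), and restricting $f$ to a dense subspace such as polynomials or reproducing kernels, Fubini lets me pull the $\C$-integration against $f$ to the outside. This exhibits the composition as an integral operator on $F^2$ whose kernel is a double Gaussian integral in $(x,t) \in \R^2$ with parameters $z$ and $\overline w$.

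Evaluating that double Gaussian integral is the heart of the argument. The exponent is a quadratic form in $(x,t)$ with linear terms in $z$ and $\overline w$; completing the square first in $x$ and then in $t$, and using the relation $ad - bc = 1$, should collapse the exponent to
\[
\frac{tz^2 - \overline{tw^2} + 2z\overline w}{2s}
\]
with $s = (a+ib+d-ic)/2$ and $t = (a+ib-d+ic)/2$, exactly the exponent of $K^{(s,t)}$. The prefactor combines the $(i\pi b)^{-1/2}$ from the definition of $\F^A$, the Bargmann normalization constants, and the $\sqrt{\pi}$ produced by the Gaussian integration; all of these together should collapse to $s^{-1/2}$ in the branch prescribed in the introduction.

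The degenerate case $b = 0$ can be handled either by a direct reproducing-kernel calculation applied to the chirp-dilation $\F^A g(x) = \sqrt d\, e^{icdx^2} g(dx)$, or more economically by continuity: the stratum $b \neq 0$ is dense in $SL(2,\R)$, and both sides depend continuously on $A$ in the strong operator topology, so the identity extends.

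The main obstacle I anticipate is keeping the branches of the complex square roots consistent throughout. The factor $1/\sqrt{i\pi b}$ in the definition of $\F^A$, the normalization arising from Gaussian integration, and the prescribed branch $\sqrt z = \sqrt{|z|}\,e^{i\theta/2}$, $\theta \in (-\pi,\pi]$, used to define $T^{(s,t)}$, must all be reconciled. Since both $A \mapsto \F^A$ and $(s,t) \mapsto T^{(s,t)}$ are only projective representations (Theorem B(iii)), one should expect the identification to hold up to a unimodular scalar; this scalar can be pinned down by checking at the identity $A = I$, where $(s,t) = (1,0)$ and both sides must act as the identity on $F^2$, and then extending to the connected group $SL(2,\R)$ by continuity.
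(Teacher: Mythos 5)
Your main computation---writing $B\F^A B^{-1}$ as an iterated Gaussian integral on a dense subspace, pulling the $\C$-integration outside by Fubini, completing the square twice and invoking $ad-bc=1$ to collapse the exponent to $(tz^2-\overline{tw^2}+2z\overline w)/(2s)$---is exactly the route the paper takes for $b\neq 0$, and it works. The genuine gap is in your last paragraph, where you propose to dispose of the leftover unimodular constant by checking $A=I$ and ``extending to the connected group $SL(2,\R)$ by continuity.'' This step fails, and its conclusion is false. The paper's precise result is $B\F^AB^{-1}=C_A T^{(s,t)}$ with an explicitly computed $C_A=\pm 1$, and $C_A$ is \emph{not} identically $1$: for instance, the proof of the corollary on inverses exhibits $C=-1$ when $a<0$ and $b=0$. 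This is exactly the metaplectic sign obstruction the paper points to (\cite[Sect.\ 9.1.4]{W}); with the prescribed principal branch $\sqrt z=\sqrt{|z|}\,e^{i\theta/2}$, $\theta\in(-\pi,\pi]$, neither $A\mapsto 1/\sqrt{i\pi b}$ nor $(s,t)\mapsto 1/\sqrt s$ is continuous across the branch cut, so the scalar is a discontinuous $\{\pm1\}$-valued function of $A$ and the connectedness argument has no traction. The honest statement of the theorem therefore carries the factor $C_A=\pm1$, which you must compute rather than argue away; the paper does this by tracking every square root through the two Gaussian integrations.

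A secondary, related weakness is your treatment of $b=0$ by density of the stratum $b\neq0$. Continuity of $A\mapsto\F^A$ in the strong operator topology as $b\to0$ is precisely where the sign ambiguity lives (the one-sided limits $b\to0^{\pm}$ of $1/\sqrt{i\pi b}$ differ in argument), so this shortcut begs the same question. The paper instead runs the direct computation for $\F^Af(x)=\sqrt d\,e^{icdx^2}f(dx)$, which is short, and records a separate formula for $C_A$ in that case. I would recommend following suit: do the $b=0$ case by hand and state the result as $T^A=C_AT^{(s,t)}$ with the sign made explicit.
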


Recall that the Fourier transform $\F: L^2(\R, dx)\to L^2(\R, dx)$ becomes the operator $\F: F^2\to F^2$,
where $\F(f)(z)=f(iz)$. Using this form and with the help of the canonical monomial orthonormal basis
for $F^2$, it is easy to show that the spectrum of the unitary operator $F$ on $L^2(\R, dx)$ or on $F^2$
consists of four points on the unit circle: $i^k$, $k=0,1,2,3$. Each point is an eigenvalue and the
corresponding eigenspace can be described in terms of the Hermite polynomials in the case of
$L^2(\R, dx)$ and in terms of Taylor coefficients in the case of $F^2$. See \cite{Zhu3} again. The following
result shows that much of this can be generalized to linear canonical transforms.

\begin{thmd}
Let $(s,t)\in SL(\C\times\C)$ with $|\re s|<1$. Then there exists a number $\gamma$ in the open unit disk
such that for each $n=0,1,2,\cdots$ the number
$$\lambda_n=\frac1{\sqrt s}\,\sqrt{\frac s{s+\overline t\gamma}}\,\frac1{(s+\overline t\gamma)^n}$$
is an eigenvalue of $T^{(s,t)}$, and a corresponding eigenvector is the function
$$f_n(z)=e^{\gamma z^2/2}\inr H_n\left(\frac x\rho\right)\exp\left[-\frac2{1+\gamma}
\left(x-\frac{1+\gamma}2\,z\right)^2\,\right]\,dx,$$
where $H_n$ is the classical Hermite polynomial of degree $n$ and $\rho$ is a certain positive number.
\end{thmd}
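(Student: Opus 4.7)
The plan is to translate the eigenvalue problem to $L^2(\R,dx)$ via Theorem C, solve it there using the classical spectral theory of elliptic linear canonical transforms, and then write the eigenvectors back in $F^2$ as Bargmann transforms of the $L^2(\R)$ eigenfunctions; the explicit value of $\gamma$ is most cleanly pinned down by a parallel direct computation on $F^2$.

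By Theorems B(ii) and C, the operator $T^{(s,t)}$ is unitarily equivalent, via the Bargmann transform $B$, to the linear canonical transform $\F^A$ with $A=\varphi^{-1}(s,t)\in SL(2,\R)$. A direct computation from the formula for $\varphi$ gives $\re s=(a+d)/2=\tfrac12\operatorname{tr} A$, so the hypothesis $|\re s|<1$ is equivalent to $|\operatorname{tr} A|<2$, i.e., to $A$ being \emph{elliptic}. In this regime $A$ is $SL(2,\R)$-conjugate to a rotation $R_\theta$, and by the projective representation property $\F^A=c\,\F^P\F^{R_\theta}\F^{P^{-1}}$ for some unit scalar $c$. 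Since $\F^{R_\theta}$ is diagonalized by the classical Hermite functions with eigenvalues $e^{-in\theta}$, the operator $\F^A$ is diagonalized by Gaussian-modulated, dilated Hermite functions $\psi_n(x)=c_n H_n(x/\rho)e^{-x^2/\rho^2}$ (with a possible chirp $e^{i\alpha x^2}$ depending on the $SL(2,\R)$ data), with eigenvalues $c\cdot e^{-in\theta}$; the parameters $\alpha$ and $\rho$ are determined by $P$, hence by $(s,t)$.

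To obtain the explicit constants in the statement, I would compute on $F^2$ directly. Take the Gaussian ansatz $f_0(z)=e^{\gamma z^2/2}$ with $|\gamma|<1$ and evaluate $T^{(s,t)}f_0$ using the standard evaluation of $\int_\C \exp(aw^2+b\overline w^2+c\overline w-|w|^2)\,d\lambda(w)$. The result is a scalar multiple of another Gaussian $e^{\gamma' z^2/2}$, where $\gamma'$ is a M\"obius-type expression in $\gamma$ with coefficients in $s,\overline s,t,\overline t$. The fixed-point equation $\gamma'=\gamma$ is quadratic, and its discriminant, after using $|s|^2=|t|^2+1$, is a positive multiple of $1-(\re s)^2$; hence $|\re s|<1$ is precisely what yields a unique root $\gamma$ in the open unit disk. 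The resulting Gaussian prefactor matches $\lambda_0=\tfrac{1}{\sqrt s}\sqrt{s/(s+\overline t\gamma)}$.

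The eigenvector $f_n$ for $n\ge 1$ is then recovered as $f_n=B\psi_n$: writing out the Bargmann kernel and the explicit form of $\psi_n$, completing the square in $x$ inside the integral, and pulling $e^{\gamma z^2/2}$ outside produces exactly the displayed integral representation, with $\rho$ identified as the width of the Gaussian in $\psi_n$. The multiplicative pattern $\lambda_n=\lambda_0(s+\overline t\gamma)^{-n}$ can be read off either from the fractional Fourier eigenvalue $e^{-in\theta}$ after the conjugation by $\F^P$, or, more directly on $F^2$, by applying $T^{(s,t)}$ to the extended ansatz $e^{\gamma z^2/2+\eta z}$ and tracking how the linear parameter $\eta$ transforms. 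The main technical obstacle is the planar Gaussian computation that identifies $\gamma$ and $\rho$ in terms of $s$ and $t$ and verifies $|\gamma|<1$ under the hypothesis; the Hermite side then follows routinely from the generating identity $\sum_n H_n(\xi)\zeta^n/n!=e^{2\xi\zeta-\zeta^2}$.
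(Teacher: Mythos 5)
Your route is genuinely different from the paper's, and it is viable in outline, but as written it defers exactly the steps that carry the content of the theorem. The paper works entirely on $F^2$: it finds $\gamma$ by the Gaussian fixed-point computation (its equation for $\gamma$ and Lemma 5), and then verifies the eigenvector equation for $Q_n(z)e^{\gamma z^2/2}$ by reducing it to a two-sided Gaussian integral equation that it proves (Theorem E / Theorem 7, by induction on the recursion $P_k=2xP_{k-1}-\delta^{-1}P_{k-1}'$) is solved precisely by Hermite polynomials; the constraint $\frac{(b^2-a^2)\mu\nu}{\nu b^2-\mu a^2}=1$ is what pins down $\rho$. You instead transport the problem to $L^2(\R)$, observe that $|\re s|<1$ is equivalent to $|a+d|<2$ (correct, since $\re s=(a+d)/2$), conjugate the elliptic matrix to a rotation, and invoke the Hermite diagonalization of the fractional Fourier transform. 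This is the classical Pei--Ding picture (the paper cites it as related work), and it would let you bypass Theorem E entirely; it also makes the positivity of $\rho$ automatic, since $\rho$ arises as a dilation parameter in an Iwasawa-type factorization of the conjugating matrix. What each approach buys: yours explains \emph{why} the eigenfunctions are chirped, dilated Hermite functions; the paper's yields the explicit constants with no conjugation bookkeeping and produces Theorem E as a by-product.

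That said, there are concrete gaps. First, the statement asserts the explicit eigenvalue $\lambda_n=\lambda_0(s+\overline t\gamma)^{-n}$ and an explicit $\rho$; in your scheme these require (i) computing the conjugating matrix $P$ with $A=PR_\theta P^{-1}$ and its Iwasawa factors explicitly in terms of $(s,t)$, and (ii) identifying $e^{i\theta}$ with $s+\overline t\gamma$ (true, since $s+\overline t\gamma=\re s\pm i\sqrt{1-(\re s)^2}$ has modulus one and real part $\frac12(a+d)=\cos\theta$, but this must be proved, not read off). Neither computation is carried out, and they are where the difficulty lives; your fallback generating-function argument on $F^2$ (applying $T^{(s,t)}$ to $e^{\gamma z^2/2+\eta z}$ and renormalizing the $\eta^2$ term via $\sum_nH_n(\xi)\zeta^n/n!=e^{2\xi\zeta-\zeta^2}$, which needs $(s+\overline t\gamma)^2\ne1$) would work and is closer to being a proof, but it is only named, not executed. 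Second, your justification that $|\re s|<1$ forces a root $\gamma$ in $\D$ is wrong as stated: using $|s|^2=|t|^2+1$ the discriminant of $s\overline t\gamma^2+(s^2-1-|t|^2)\gamma-st=0$ is $4s^2\bigl((\re s)^2-1\bigr)$, a complex number, and in any case the sign of a discriminant does not locate the roots relative to the unit circle. One must actually estimate the two roots, as the paper's Lemma 5 does, showing $|\gamma_1|<1$ and $|\gamma_2|\ge1$ exactly when $|\re s|<1$ (when $|\re s|\ge1$ both roots lie on $\T$ and the construction fails).
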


In order to prove Theorem D, we will first obtain some properties of Hermite polynomials, which appear to
be new (to the best of our knowledge). In particular, we find out that each Hermite polynomial is a
solution of a certain integral equation, which enables us to obtain the classical closed-form formula
for Hermite polynomials. More specifically, we have the following.

\begin{thme}
Let $\mu,\nu, a, b\in\C$ with $a\not=0$, $b\neq 0$, $\re \mu>0$, $\re \nu>0$, and
$\nu b^2\neq\mu a^2$. Suppose
$$P_n(x)=c_0x^n+c_1x^{n-1}+\cdots+c_n,\qquad c_0\not=0.$$
If $a^{k}\neq b^{k}$ for $1\le k\leq n$, then the following conditions are equivalent:
\begin{itemize}
  \item[(a)] $P_n(x)$ is a constant multiple of the Hermite polynomial $H_n(x)$.
  \item[(b)] $P_n(x)$ is a solution of the integral equation
$$\inr P_n(x)e^{-\mu \left(x-az\right)^2}dx=C_n \inr P_n(x)e^{-\nu \left(x-bz\right)^2}dx,$$
where
\begin{equation}\label{eqHermieeqN}
C_n=\frac{\sqrt{\nu}}{\sqrt{\mu}}\left(\frac{a}{b}\right)^n,\qquad
\frac{(b^2-a^2)\mu\nu}{\nu b^2-\mu a^2}=1.
\end{equation}
  \item[(c)] $c_{2k+1}=0$ whenever $2k+1\leq n$, and
$$(n-2k)!\,c_{2k}=-2(2k+2)(n-2k-2)!\,c_{2k+2}$$
whenever $2k+2\leq n$. Or equivalently,
$$P_n(x)=C\sum_{k=0}^{[\frac{n}{2}]}\frac{(-1)^k2^nn!}{4^kk!(n-2k)!}x^{n-2k}$$
for some nonzero constant $C$. Here $[\cdot]$ denotes the greatest integer function.
\end{itemize}
\end{thme}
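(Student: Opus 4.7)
The plan is to convert the integral equation in (b) into a polynomial identity in $z$ and then exploit the resulting triangular linear system on the coefficients $c_k$. The basic tool is the standard identity, valid for any polynomial $P$ and any $\alpha\in\C$ with $\re\alpha>0$:
$$\int_{\R} P(x)\,e^{-\alpha(x-\beta)^2}\,dx \;=\; \sqrt{\pi/\alpha}\,\bigl(e^{D^2/(4\alpha)}P\bigr)(\beta),\qquad D=d/dx,$$
obtained by expanding $P(x)=\sum_k P^{(k)}(\beta)(x-\beta)^k/k!$ and using the Gaussian moments $\int_{\R}u^{2j}e^{-\alpha u^2}du = \sqrt{\pi/\alpha}\,(2j)!/(4^j j!\,\alpha^j)$. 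Applied to both sides of (b), and after substituting the given value of $C_n$, the integral equation becomes the polynomial identity
$$(e^{D^2/(4\mu)}P_n)(az) \;=\; \left(\frac{a}{b}\right)^n\bigl(e^{D^2/(4\nu)}P_n\bigr)(bz), \qquad (\star)$$
and the rest of the argument is a purely algebraic analysis of $(\star)$.

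To establish (a)$\Rightarrow$(b), I would verify $(\star)$ for $P_n=H_n$ via generating functions. Since $\sum_n H_n(x)t^n/n!=e^{2xt-t^2}$ and $e^{sD^2}e^{2xt}=e^{4st^2+2xt}$, one computes
$$\sum_{n\ge 0}(e^{D^2/(4\mu)}H_n)(az)\,\frac{t^n}{n!}\;=\;e^{2azt+(1/\mu-1)t^2},$$
and, after replacing $t$ by $at/b$ and $\mu$ by $\nu$ and evaluating at $bz$,
$$\sum_{n\ge 0}\left(\frac{a}{b}\right)^n(e^{D^2/(4\nu)}H_n)(bz)\,\frac{t^n}{n!}\;=\;e^{2azt+(a^2/b^2)(1/\nu-1)t^2}.$$
These two generating functions coincide iff $(1/\mu-1)=(a^2/b^2)(1/\nu-1)$, which clears to $(b^2-a^2)\mu\nu=\nu b^2-\mu a^2$, exactly the condition in \eqref{eqHermieeqN}. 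Hence $(\star)$ holds for $H_n$ and for every scalar multiple.

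For (b)$\Rightarrow$(c), I would write $P_n(x)=\sum_{k=0}^n c_k x^{n-k}$, expand both sides of $(\star)$, and match coefficients of $z^m$. Setting $r=n-m$, this yields for $r=1,\dots,n$ the relation
$$\sum_{j=0}^{[r/2]}c_{r-2j}\,\frac{(n-r+2j)!}{j!}\left[\frac{1}{(4\mu)^j}-\left(\frac{a}{b}\right)^r\frac{1}{(4\nu)^j}\right]=0. \qquad (\dagger)$$
The $j=0$ ``diagonal'' coefficient of $c_r$ is $(n-r)!\,[1-(a/b)^r]$, which is nonzero by the hypothesis $a^k\neq b^k$ for $1\le k\le n$; hence $(\dagger)$ triangularly determines $c_r$ from $c_{r-2},c_{r-4},\ldots$. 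For odd $r$, induction forces $c_1=c_3=\cdots=0$. Since the scalar multiples of $H_n$ already form a nontrivial family of solutions of $(\dagger)$ by the preceding paragraph, and the triangular system fixes the solution uniquely up to the value of $c_0$, any $P_n$ satisfying (b) must be a constant multiple of $H_n$. This yields (a), and then the explicit closed form and two-term recurrence in (c) follow from the classical formula $H_n(x)=e^{-D^2/4}[(2x)^n]$ (obtained by applying $e^{D^2/4}$ to the generating function).

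The main obstacle is the careful bookkeeping in passing from $(\star)$ to $(\dagger)$ and verifying that the compatibility identity in \eqref{eqHermieeqN} is precisely the condition that makes $H_n$ a solution of $(\star)$. The strategic payoff of framing the argument through $(\star)$ is that it reduces the equivalence (a)$\Leftrightarrow$(b) to a uniqueness statement for a triangular linear system, sidestepping any direct manipulation of the multi-term recurrence $(\dagger)$ into the clean two-term Hermite recurrence in (c).
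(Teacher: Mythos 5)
Your proof is correct, but it takes a genuinely different route from the paper's. The paper proves a more general statement (its Theorem~\ref{thmHit}) with a free parameter $\delta=\frac{(b^2-a^2)\mu\nu}{\nu b^2-\mu a^2}$, replacing (a) by the $\delta$-deformed recursion $P_k=2xP_{k-1}-\delta^{-1}P_{k-1}'$; it proves (a)$\Rightarrow$(b) by induction on $n$ using differentiation under the integral sign, and (b)$\Rightarrow$(c) by differentiating the integral equation $n-j$ times and comparing constant terms, which leads to a fairly intricate induction on $k$ involving the binomial collapse of $\bigl(1-\delta/\mu\bigr)^{k+1}$. You instead rewrite the Gaussian convolution as the heat semigroup, $\inr P(x)e^{-\alpha(x-\beta)^2}dx=\sqrt{\pi/\alpha}\,(e^{D^2/(4\alpha)}P)(\beta)$, which turns (b) into the polynomial identity $(\star)$; then (a)$\Rightarrow$(b) is a two-line generating-function computation (which also exposes $\delta=1$ as exactly the compatibility condition), and (b)$\Rightarrow$(a) follows from the triangularity of the system $(\dagger)$, whose diagonal entries $(n-r)!\,[1-(a/b)^r]$ are nonzero precisely by the hypothesis $a^k\neq b^k$ --- this sidesteps the paper's coefficient induction entirely and delivers (c) for free from $H_n=e^{-D^2/4}[(2x)^n]$. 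What the paper's longer route buys is the general-$\delta$ version and its by-products (the explicit evaluation of the integrals in its subsequent corollary, and the remarks after Theorem~\ref{thmev} that reuse the coefficient analysis); your route buys brevity and a clean uniqueness statement, and would also recover the value of $C_n$ from the top coefficient if it were not prescribed. The only point you should make explicit is the justification of the moment formula for complex center $\beta=az$ (a contour shift using $\re\alpha>0$, or the entire-function argument of the paper's Lemma~\ref{lemcal}); as written you pass from $\inr(x-\beta)^ke^{-\alpha(x-\beta)^2}dx$ to the real-axis moments without comment, but this is standard and not a genuine gap.
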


We will actually prove a more general result about the solution of such an integral equation without
the assumptions in \eqref{eqHermieeqN}. We will also evaluate the integrals in part (b) above.

\section{A two-parameter family of integral operators}

In this section we study the integral operators $T^{(s,t)}$, $(s,t)\in\C^2$ with $s\not=0$, on the Fock space
$F^2$. We will determine exactly when these operators are bounded and when they are unitary. Recall that
$T^{(s,t)}$ is the integral operator (with respect to the Gaussian measure) whose integral kernel is given by
$$K_w^{(s,t)}(z)=K^{(s,t)}(z,w)=\frac1{\sqrt s}\,\exp\left[\frac{tz^2-\overline{tw^2}+2z\overline w}{2s}
\right].$$
We begin with some elementary properties of these kernel functions.

\begin{lemma}\label{lem1}
The two-parameter kernel functions $K^{(s,t)}(z,w)$ have the following properties.
\begin{enumerate}
\item[(a)] $\overline{K^{(s,t)}(z,w)}=cK^{(\,\overline{s},-{t})}(w,z)$, where $c=1$ when $s+|s|\not=0$
and $c=-1$ when $s<0$. 
\item[(b)] $K_w^{(s,t)}\in F^2$ if and only if $|s|>|t|$.
\end{enumerate}
\end{lemma}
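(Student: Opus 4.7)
The plan is to verify both parts by direct computation. For part (a), I conjugate the kernel formula term by term to get
\begin{equation*}
\overline{K^{(s,t)}(z,w)} = \frac{1}{\overline{\sqrt s}}\exp\left[\frac{\overline{t}\,\overline{z}^{\,2} - t w^2 + 2\overline{z}\, w}{2\overline s}\right],
\end{equation*}
and unfold the definition of $K^{(\overline s,-t)}(w,z)$ to obtain
\begin{equation*}
K^{(\overline s,-t)}(w,z) = \frac{1}{\sqrt{\overline s}}\exp\left[\frac{-t w^2 + \overline{t}\,\overline{z}^{\,2} + 2 w\overline z}{2\overline s}\right].
\end{equation*}
The exponentials agree termwise, so the whole claim reduces to comparing the prefactors $\overline{\sqrt s}$ and $\sqrt{\overline s}$. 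Writing $s = |s|e^{i\theta}$ with $\theta \in (-\pi,\pi]$, the branch convention fixed in the paper gives $\overline{\sqrt s} = \sqrt{|s|}\,e^{-i\theta/2}$. For $\theta \in (-\pi,\pi)$ the principal argument of $\overline s$ equals $-\theta$, so $\sqrt{\overline s} = \overline{\sqrt s}$ and $c = 1$; this is precisely the case $s+|s|\neq 0$. For $\theta = \pi$ (i.e.\ $s<0$), the branch cut forces $\overline s = s$ and $\sqrt{\overline s} = i\sqrt{|s|} = -\overline{\sqrt s}$, yielding $c = -1$.

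For part (b), I fix $w \in \C$ and regard $K_w^{(s,t)}$ as a function of $z$. Pulling out the $z$-independent prefactor, the question reduces to whether the function $g(z) := \exp[\alpha z^2 + \beta z]$ lies in $F^2$, where $\alpha = t/(2s)$ and $\beta = \overline w/s$. Setting $z = x+iy$, I expand $|g(z)|^2\,e^{-|z|^2} = \exp[Q(x,y) + L(x,y)]$, where $L$ is linear in $(x,y)$ and
\begin{equation*}
Q(x,y) = (2\re\alpha - 1)x^2 - (2\re\alpha + 1)y^2 - 4(\im\alpha)\,xy.
\end{equation*}
The symmetric matrix of $Q$ has trace $-2$ and determinant $1-4|\alpha|^2$, so $Q$ is negative definite if and only if $|\alpha|<1/2$, equivalently $|t|<|s|$. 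In that case the Gaussian integral converges after completing the square in $(x,y)$. Conversely, when $|t| \geq |s|$ the form $Q$ acquires a nonnegative eigenvalue; decomposing the integral along the corresponding eigendirection (using orthogonal variables that diagonalize $Q$) shows the one-dimensional factor along that direction diverges regardless of $L$.

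The only real subtlety is the branch-cut bookkeeping in part (a), where the global definition of $\sqrt{\,\cdot\,}$ is discontinuous exactly on the negative real axis, forcing a sign correction when $s<0$. Everything else is a routine two-variable Gaussian analysis, the only nontrivial input being the standard fact that $\int_{\R^2} e^{Q+L}\,dx\,dy$ is finite precisely when the quadratic form $Q$ is negative definite.
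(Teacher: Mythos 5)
Your proposal is correct. Part (a) coincides with the paper's treatment: the authors simply declare it ``obvious'' and record $c=\sqrt{\overline s}/\overline{\sqrt s}$, which is exactly the prefactor ratio you compute, with the same case analysis on the branch cut (and note that, since $s\neq 0$ throughout, $s+|s|=0$ is indeed equivalent to $s<0$). Part (b) is where you genuinely diverge from the paper: the authors observe that for $t\neq 0$ the entire function $K_w^{(s,t)}$ has order $2$ and type $|t|/(2|s|)$ and then invoke Theorem 2.12 of Zhu's \emph{Analysis on Fock Spaces}, which characterizes membership in $F^2$ in terms of order and type; you instead carry out the two-variable Gaussian integral directly, reducing everything to the signature of the quadratic form $Q$ with trace $-2$ and determinant $1-4|\alpha|^2$. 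Both arguments are valid. The paper's is one line once the external theorem is accepted; yours is self-contained and has the added virtue of handling the borderline case $|t|=|s|$ (type exactly $1/2$, where order-and-type criteria are least decisive) cleanly, since the degenerate eigendirection of $Q$ forces divergence of the corresponding one-dimensional factor no matter what the linear term is. Your divergence argument for $|t|\geq|s|$ is sound because after an orthogonal diagonalization the integrand factors as a product of two one-variable integrals, one of which is $\int_\R e^{\lambda u^2+au}\,du=\infty$ whenever $\lambda\geq 0$.
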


\begin{proof}
Part (a) is obvious with
$$c=\frac{\sqrt{\overline s}}{\overline{\sqrt s}}=\begin{cases} 1, &s+|s|\not=0,\\ -1, & s<0.\end{cases}$$

If $t=0$, then it is clear that both $|s|>|t|$ and $K_w^{(s,t)}\in F^2$ are true. If $t\neq0$, the entire
function $K_w^{(s,t)}$ is of order $2$ and type $|t|/(2|s|)$, so by Theorem 2.12 of \cite{Zhu1},
$K_w^{(s,t)}\in F^2$ if and only if the type $|t|/(2|s|)$ is less than $1/2$, or $|t|<|s|$. This
proves part (b).
\end{proof}

\begin{lemma}\label{lem2}
For $k=1,2$ let $(s_k, t_k)\in\C^2$ with $|s_k|>|t_k|$. Then
\begin{multline*}
\inc K^{(s_1,t_1)}(z,w)K^{(s_2,t_2)}(w,u)\,d\lambda(w)=\\
cK^{(s,t)}(z,u)\exp\left[\frac{t_2(|t_1|^2+1-|s_1|^2)}{2s_1(s_1s_2+\bar t_1t_2)}\,z^2-
\frac{\bar t_1(|t_2|^2+1-|s_2|^2)}{2s_2(s_1s_2+\bar t_1t_2)}\,\overline u^2\right],
\end{multline*}
where $(s, t)=(s_1s_2+\bar t_1t_2,\, t_1s_2+\bar s_1t_2)$, $|s|>|t|$, and
$$c=\frac{\sqrt{s_1s_2+{\overline{t_1}t_2}}}{\sqrt{s_{1}}\sqrt{s_{2}}}
\sqrt{\frac{s_{1}s_{2}}{s_1s_2+{\overline{t_1}t_2}}}=\pm1.$$
\end{lemma}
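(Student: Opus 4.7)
The plan is to evaluate the integral as a complex Gaussian in $w$. After multiplying the two kernels and absorbing the factor $e^{-|w|^2}$ from $d\lambda(w)$, the integrand becomes $1/\sqrt{s_1 s_2}$ times the exponential of
\[
A w^2 + B \bar w^2 - |w|^2 + D w + E \bar w + F,
\]
where $A = t_2/(2s_2)$, $B = -\bar t_1/(2 s_1)$, $D = \bar u/s_2$, $E = z/s_1$, and $F = t_1 z^2/(2s_1) - \overline{t_2 u^2}/(2 s_2)$ is independent of $w$. Writing $w=x+iy$, the quadratic part becomes a $2\times 2$ complex form whose determinant is $1-4AB$; the real part is positive definite (hence the integral converges absolutely) exactly under our hypothesis $|s_k|>|t_k|$.

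The next step is the standard identity
\[
\int_\C e^{A w^2 + B \bar w^2 - |w|^2 + D w + E \bar w}\,d\lambda(w) = \frac{1}{\sqrt{1-4AB}}\,\exp\!\left(\frac{A E^2 + B D^2 + D E}{1-4AB}\right),
\]
obtained by diagonalizing the quadratic form and completing the square. With our values, $1 - 4AB = (s_1 s_2 + \bar t_1 t_2)/(s_1 s_2) = s/(s_1 s_2)$, so the total prefactor collapses to $\tfrac{1}{\sqrt{s_1 s_2}}\sqrt{s_1 s_2/s}$, which equals $c/\sqrt{s}$ for some $c\in\{\pm 1\}$. The $w$-independent exponent $F + (AE^2 + BD^2 + DE)/(1-4AB)$ is a quadratic form in $(z,\bar u)$ whose $z\bar u$-coefficient is $1/s$, matching the $2z\bar u/(2s)$ in $K^{(s,t)}(z,u)$. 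For the $z^2$-coefficient, the required identity is
\[
\frac{t_1}{2s_1}+\frac{t_2}{2s_1 s}=\frac{t_1 s_2 + \bar s_1 t_2}{2s}+\frac{t_2(|t_1|^2+1-|s_1|^2)}{2 s_1 s},
\]
which, after multiplying by $2 s_1 s$, reduces to the elementary identity $t_1 s + t_2 = s_1(t_1 s_2+\bar s_1 t_2) + t_2(|t_1|^2+1-|s_1|^2)$; the $\bar u^2$-coefficient is checked symmetrically. The claim $|s|>|t|$ then follows from the direct expansion identity $|s|^2 - |t|^2 = (|s_1|^2-|t_1|^2)(|s_2|^2-|t_2|^2)$, in which the cross terms involving $s_1 s_2 t_1\bar t_2$ cancel.

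The main obstacle is the bookkeeping of the complex square roots that produces the sign $c$. Under the principal-branch convention of the excerpt, the products $\sqrt{s_1}\sqrt{s_2}$, $\sqrt{s_1 s_2}$, $\sqrt{s}$, and $\sqrt{s_1 s_2/s}$ need not be simultaneously compatible, and the formula stated for $c$ in the lemma records exactly this branch discrepancy. Once the algebraic identities above are in place, the remaining work is a careful comparison of the arguments of $s_1$, $s_2$, $s_1 s_2$, and $s$ to confirm that $c$ is indeed $\pm 1$ and coincides with the displayed expression.
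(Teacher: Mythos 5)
Your argument is correct and is essentially the paper's own proof: the authors simply cite the same complex Gaussian identity (Bargmann's (1.18), which is your formula with $\gamma=2A$, $\overline{\delta}=2B$, $a=D$, $\overline{b}=E$) and leave to the reader exactly the work you carry out — matching the $z^2$, $\overline{u}^2$, and $z\overline{u}$ coefficients, the identity $|s|^2-|t|^2=(|s_1|^2-|t_1|^2)(|s_2|^2-|t_2|^2)$, and the branch-of-square-root bookkeeping that produces $c=\pm1$. Two cosmetic points only: in your displayed identity the factor $e^{-|w|^2}$ appears both in the exponent and inside $d\lambda(w)$, and convergence of the Gaussian is implied by (not equivalent to) $|s_k|>|t_k|$ (the precise condition is $|t_2/s_2 - t_1/\overline{s_1}|<2$); neither affects the proof.
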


\begin{proof}
It follows from Lemma~\ref{lem1} and H\"older's inequality that the integral in question exists for all $z$
and $u$ in $\C$. The desired integral formula then follows from the elementary identity
(see (1.18) of \cite{B1} for example),
\begin{align}\label{eqIC}
&\inc\exp\left[\frac{\gamma}{2}\,w^{2}+aw+\frac{\overline\delta}{2}\,\overline{w}^{2}
+\overline{b}\,\overline{w}\right]\,d\lambda(w)\nonumber\\
&\qquad\qquad\qquad\qquad\qquad=\frac{1}{\sqrt{1-\gamma\overline{\delta}}}
\exp\left[{\frac{\overline{\delta}a^{2}+\gamma \overline{b}^{2}+2a\overline{b}}
{2(1-\gamma\overline{\delta})}}\right]
\end{align}
under the assumption $|\gamma+\delta|^{2}<4$, which implies that
$$\re(1-\gamma\overline\delta)=1-\frac14|\gamma+\delta|^2+|\gamma-\delta|^2>0.$$
It is easy to check that
$$|s|^2-|t|^2=(|s_1|^2-|t_1|^2)(|s_2|^2-|t_2|^2)>0.$$
This proves the lemma.
\end{proof}

\begin{corollary}\label{cor1}
If $(s,t)\in\C^2$ with $|s|>|t|$, then
$$\left\|K^{(s,t)}_w\right\|=\frac{1}{\sqrt[4]{|s|^2-|t|^2}}\exp\left[\frac{|{w}|^2}{2(|s|^2-|t|^2)}\right]
\left|\exp\left[\frac{t(|t|^2+1-|s|^2)}{2\overline{s}(|s|^2-|t|^2)}{w^2}\right]\right|.$$
\end{corollary}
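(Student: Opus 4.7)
The plan is to compute $\|K^{(s,t)}_w\|^2=\inc K^{(s,t)}(z,w)\overline{K^{(s,t)}(z,w)}\,d\lambda(z)$ directly using the Gaussian integral formula~\eqref{eqIC}, exactly the tool already exploited in the proof of Lemma~\ref{lem2}. Trying to assemble the result by black-box applications of Lemmas~\ref{lem1} and~\ref{lem2} is awkward: after using Lemma~\ref{lem1}(a) to convert the complex conjugate, the integration variable $z$ ends up in the first slot of both kernels, which is not the slot pattern that Lemma~\ref{lem2} accommodates.

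First I would expand the integrand $K^{(s,t)}(z,w)\overline{K^{(s,t)}(z,w)}$ as $1/|s|$ times the exponential of a quadratic polynomial in $z,\overline z$. Reading off the coefficients, the $z^2$ coefficient is $t/(2s)$, the $\overline z^2$ coefficient is $\overline t/(2\overline s)$, the linear parts are $(\overline w/s)\,z$ and $(w/\overline s)\,\overline z$, and the $z$-independent remainder is $-\overline t\overline w^2/(2s)-tw^2/(2\overline s)$. Applying~\eqref{eqIC} with $\gamma=\delta=t/s$, $a=\overline w/s$, $\overline b=w/\overline s$, the admissibility condition $|\gamma+\delta|^2<4$ reduces to $|t|<|s|$, which is Lemma~\ref{lem1}(b), and the prefactor $1/\sqrt{1-\gamma\overline\delta}=|s|/\sqrt{|s|^2-|t|^2}$ combines with the outside $1/|s|$ to give $1/\sqrt{|s|^2-|t|^2}$.

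The remaining step is to add the $z$-independent remainder to the exponent produced by~\eqref{eqIC}. Factoring out $1/(2(|s|^2-|t|^2))$ and using the identity $1-(|s|^2-|t|^2)=|t|^2+1-|s|^2$ brings the $|w|^2$ coefficient to $1/(|s|^2-|t|^2)$ and produces conjugate $w^2$ and $\overline w^2$ terms that collapse into $2\,\re v$ with $v=t(|t|^2+1-|s|^2)w^2/(2\overline s(|s|^2-|t|^2))$. Taking a square root and using $\exp[2\,\re v]=|\exp v|^2$ then yields the stated formula. The only real obstacle is the bookkeeping in this final simplification; conceptually the identity is forced by~\eqref{eqIC}. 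A sanity check at $t=0$, where the formula collapses to $\|K^{(s,0)}_w\|=|s|^{-1/2}\exp[|w|^2/(2|s|^2)]$, agrees with a direct one-line calculation using the reproducing kernel of $F^2$.
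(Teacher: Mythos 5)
Your computation is correct, and it is in substance exactly the ``elementary details'' the paper leaves to the reader: expand $|K^{(s,t)}(z,w)|^2$, apply \eqref{eqIC} with $\gamma=\delta=t/s$, $a=\overline w/s$, $\overline b=w/\overline s$, and recombine the $w$-dependent terms using $1-(|s|^2-|t|^2)=|t|^2+1-|s|^2$. All the coefficients you read off, the prefactor $|s|/\sqrt{|s|^2-|t|^2}$, and the final collapse of the conjugate pair into $|\exp v|^2$ check out, and the $t=0$ sanity check is a nice touch. The one point I would correct is your stated reason for bypassing Lemma~\ref{lem2}: the slot pattern \emph{does} match. After Lemma~\ref{lem1}(a) the integrand is $c\,K^{(\overline s,-t)}(w,z)\,K^{(s,t)}(z,w)$, i.e.\ the integration variable $z$ sits in the second slot of the first kernel and the first slot of the second, which is precisely the configuration $K^{(s_1,t_1)}(\cdot,z)K^{(s_2,t_2)}(z,\cdot)$ of Lemma~\ref{lem2} with $(s_1,t_1)=(\overline s,-t)$, $(s_2,t_2)=(s,t)$, and both outer arguments equal to $w$; one then reads off $(s,t)=(|s|^2-|t|^2,0)$ and the exponential correction, and the unimodular constants must multiply to $1$ since the norm is positive. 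So the paper's cited route works verbatim; your direct route redoes the same Gaussian integral from scratch. Either is fine --- yours is more self-contained, the paper's is shorter --- but the claimed obstruction to using Lemma~\ref{lem2} is not real.
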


\begin{proof}
This is a direct consequence of Lemmas \ref{lem1} and \ref{lem2}. We leave the elementary details to
the interested reader.
\end{proof}

We now begin to study the two-parameter family of operators $T^{(s,t)}$. First note that for $|s|>|t|$
the function $T^{(s, t)}f$ is well-defined and entire for every $f\in L^2(\C,d\lambda)$. In fact,
by H\"{o}lder's inequality and part (b) of Lemma~\ref{lem1}, we have
\begin{align*}
    \left|T^{(s,t)}f(z)\right|&\leq \frac{1}{\sqrt{|s|}}\|f\|\left[\inc \left|e^{\frac{t}{2s}z^2-\frac{\overline{t}}{2s}\overline{w}^2+\frac{ z\overline{w}}{s}}\right|^2d\lambda(w)\right]^\frac{1}{2}\\
    &=\|f\|\ \|K_z^{(\overline{s},-{t})}\|<\infty.
\end{align*}
For any $w\in\C$ let us write
$$K_w(z)=K(z, w)=K^{(1,0)}(z, w)=e^{z\overline{w}},\qquad z\in\C,$$ which is the well known reproducing
kernel of $F^2$ at $w$.

\begin{lemma}\label{lem3}
Let $(s,t)\in\C^2$ with $|s|>|t|$ and $u\in\C$. Then the operator $T^{(s,t)}$ has the following properties.
\begin{enumerate}
\item[(a)] $T^{(s,t)}K_u=K_u^{({s},{t})}$.
\item[(b)] If $|s|^2=|t|^2+1$, then $T^{(s,t)}K_u^{(\overline{s},-{t})}=cK_u$, where
$c=\pm1$ as in Lemma~\ref{lem1}.
\end{enumerate}
\end{lemma}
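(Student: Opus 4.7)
The plan is to derive both parts as immediate consequences of Lemma~\ref{lem2}, after recognizing the reproducing kernel $K_u$ as a member of the same two-parameter family, namely $K_u(w)=K^{(1,0)}(w,u)$.

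For part (a), I would apply Lemma~\ref{lem2} with $(s_1,t_1)=(s,t)$ and $(s_2,t_2)=(1,0)$. The hypothesis $|s_2|>|t_2|$ is trivial, and the ``group-law'' output computes to
$(s_1s_2+\overline{t_1}t_2,\, t_1s_2+\overline{s_1}t_2)=(s,t)$,
while the exponential correction factor vanishes because $|t_2|^2+1-|s_2|^2=0$ kills the $\overline u^2$-term and $t_2=0$ kills the $z^2$-term. The constant $c$ from Lemma~\ref{lem2} reduces to $\sqrt{s}/(\sqrt{s}\cdot 1)\cdot\sqrt{s/s}=1$. Putting this together yields
\[
T^{(s,t)}K_u(z)=\int_\C K^{(s,t)}(z,w)K^{(1,0)}(w,u)\,d\lambda(w)=K_u^{(s,t)}(z).
\]

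For part (b), I would apply Lemma~\ref{lem2} again, now with $(s_1,t_1)=(s,t)$ and $(s_2,t_2)=(\overline s,-t)$. Here one needs $|\overline s|>|{-t}|$, which follows from $|s|^2=|t|^2+1>|t|^2$. The group-law computation gives
\[
s_1s_2+\overline{t_1}t_2=|s|^2-|t|^2=1,\qquad t_1s_2+\overline{s_1}t_2=t\overline s-\overline s t=0,
\]
so the leading factor is $K^{(1,0)}(z,u)=K_u(z)$. Both coefficients in the exponential correction contain a factor of $|t_k|^2+1-|s_k|^2$, which equals $0$ for $k=1,2$ by the unitary constraint, so the correction reduces to $1$. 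Finally, $c=\sqrt{1}/(\sqrt{s}\sqrt{\overline s})\cdot\sqrt{s\overline s/1}=|s|/(\sqrt{s}\sqrt{\overline s})$, which is exactly $\pm 1$ according to the sign convention of Lemma~\ref{lem1}: it equals $1$ unless $s<0$, in which case $\sqrt{s}\sqrt{\overline s}=-|s|$ and we get $-1$.

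The only real obstacle is bookkeeping: one must carefully verify that the exponential correction in Lemma~\ref{lem2} genuinely collapses to $1$ in both applications (in (a) because one of the pairs is the trivial element $(1,0)$, and in (b) because the unitary constraint $|s|^2=|t|^2+1$ annihilates the relevant coefficient). Once this is observed, the lemma is essentially immediate, since Lemma~\ref{lem2} has already absorbed the Gaussian integral computation via formula~\eqref{eqIC}. No separate convergence argument is needed, because Lemma~\ref{lem1}(b) together with the hypothesis $|s|>|t|$ ensures that every kernel in sight lies in $F^2$.
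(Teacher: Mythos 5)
Your proof is correct and follows exactly the paper's own route: both parts are obtained by applying Lemma~\ref{lem2} with $(s_2,t_2)=(1,0)$ and $(s_2,t_2)=(\overline s,-t)$ respectively, checking that the exponential correction collapses and that $c=|s|/(\sqrt{s}\sqrt{\overline s})=\pm1$ as in Lemma~\ref{lem1}. Your bookkeeping of the group law, the vanishing coefficients, and the sign of $c$ all match the paper.
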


\begin{proof}
It follows from Lemma~\ref{lem2} that
$$T^{(s,t)}K_u(z)=\inc K^{(s,t)}(z,w) K^{(1,0)}(w, u)\,d\lambda(w)=K_u^{({s},{t})}(z).$$
If $|s|^2=|t|^2+1$, Lemma~\ref{lem2} again gives
\begin{eqnarray*}
T^{(s,t)}K_u^{(\overline{s},-{t})}(z)=\inc K^{(s,t)}(z,w) K^{(\overline{s},-{t})}(w, u)\,d\lambda(w)
=c K_u^{(1,0)}(z),
\end{eqnarray*}
where $c=|s|/(\sqrt{s}\sqrt{\overline{s}})$. This proves the desired result.
\end{proof}

We can now prove the first main result of the paper, Theorem A, which is stated in a slightly different way
as follows.

\begin{theorem}\label{thmTB}
The integral operators $T^{(s,t)}$ on $F^2$, where $|t|<2|s|$, have the following properties.
\begin{enumerate}
\item[(a)] If $|s|^2-|t|^2<1$, then $T^{(s,t)}$ is unbounded on $F^2$.
\item[(b)] If $|s|^2-|t|^2>1$, then $T^{(s,t)}$ is not only bounded on $F^2$ but also in the
Hilbert-Schmidt class $S_2$ with
$$\|T^{(s,t)}\|_{S_2}=\frac{\sqrt{|s|}}{\sqrt{|s|^2-|t|^2-1}}.$$
\item[(c)] If $|s|^2-|t|^2=1$, then $T^{(s, t)}$ is a unitary operator on $F^2$ with
$$\left[T^{(s, t)}\right]^{-1}=\left[T^{(s, t)}\right]^{*}=cT^{(\overline{s}, -t)},$$
where $c=\pm1$ as in Lemma~\ref{lem1}.
\end{enumerate}
\end{theorem}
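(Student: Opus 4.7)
Write $\alpha:=|s|^2-|t|^2$. The plan is to handle the three regimes $\alpha<1$, $\alpha>1$, and $\alpha=1$ separately, using the kernel calculus built up in Lemmas~\ref{lem1}--\ref{lem3} and Corollary~\ref{cor1}.

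For part (a), I would test $T^{(s,t)}$ on the reproducing kernels $K_u$. The identity $T^{(s,t)}K_u=K_u^{(s,t)}$ holds whenever $|t|<2|s|$: it is Lemma~\ref{lem3}(a) when $|s|>|t|$, and a direct application of formula~(1.18) in the remaining case. If $|s|\le|t|$, then Lemma~\ref{lem1}(b) shows $K_u^{(s,t)}\notin F^2$, so $T^{(s,t)}$ fails even to send $F^2$ into $F^2$. If $|t|<|s|$ but $\alpha<1$, then Corollary~\ref{cor1} combined with $\|K_u\|^2=e^{|u|^2}$ yields
$$\frac{\|K_u^{(s,t)}\|^2}{\|K_u\|^2}=\frac{1}{\sqrt\alpha}\exp\!\left[|u|^2\!\left(\tfrac{1}{\alpha}-1\right)+\re(\eta u^2)\right]$$
for an explicit $\eta\in\C$; since $1/\alpha-1>0$ and $\arg u$ can be chosen so that $\re(\eta u^2)\ge 0$, this ratio blows up, giving unboundedness on $F^2$.

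For part (b), the antiholomorphy of $K^{(s,t)}(z,w)$ in $w$ is the key. By Lemma~\ref{lem1}(a), $g_z(w):=\overline{K^{(s,t)}(z,w)}=cK_z^{(\overline s,-t)}(w)$ lies in $F^2$ (since $|s|>|t|$), and $T^{(s,t)}f(z)=\langle f,g_z\rangle_{F^2}$. Parseval applied to any orthonormal basis of $F^2$ then gives
$$\|T^{(s,t)}\|_{S_2}^2=\inc\|g_z\|^2\,d\lambda(z)=\inc\|K_z^{(\overline s,-t)}\|^2\,d\lambda(z).$$
Inserting Corollary~\ref{cor1} reduces this to a two-dimensional real Gaussian integral in $(\re z,\im z)$. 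The associated symmetric matrix turns out to have determinant $4(\alpha-1)^2/(\alpha|s|^2)$ and positive trace $4(\alpha-1)/\alpha$, so the integral evaluates to $\pi|s|\sqrt\alpha/(\alpha-1)$; combining with the $1/\sqrt\alpha$ prefactor from Corollary~\ref{cor1} and the $1/\pi$ from $d\lambda$ yields the stated value $|s|/(\alpha-1)$.

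For part (c), I would first prove $T^{(s,t)}|_D$ isometric on the dense span $D=\mathrm{span}\{K_u:u\in\C\}$. Lemma~\ref{lem3}(a) gives $T^{(s,t)}K_u=K_u^{(s,t)}$, and Lemmas~\ref{lem1}(a) and~\ref{lem2} evaluate $\langle K_u^{(s,t)},K_v^{(s,t)}\rangle$ by taking $(s_1,t_1)=(\overline s,-t)$, $(s_2,t_2)=(s,t)$: the composition parameters collapse to $(1,0)$ since $\alpha=1$, the two exponential correction terms in Lemma~\ref{lem2} vanish for the same reason, and a short check shows the scalar prefactors from Lemma~\ref{lem1}(a) and Lemma~\ref{lem2} satisfy $c\cdot c'=+1$ in both branches of $\arg s$. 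Hence $\langle K_u^{(s,t)},K_v^{(s,t)}\rangle=e^{v\overline u}=\langle K_u,K_v\rangle$, so $T^{(s,t)}|_D$ is isometric. The pointwise estimate $|T^{(s,t)}f(z)|\le\|f\|\,\|K_z^{(\overline s,-t)}\|$ recorded just before Lemma~\ref{lem3} shows that the bounded isometric extension of $T^{(s,t)}|_D$ agrees with the integral formula on all of $F^2$. The adjoint formula $(T^{(s,t)})^*=cT^{(\overline s,-t)}$ is a kernel-level check using Lemma~\ref{lem1}(a), and Lemma~\ref{lem3}(b) then gives $cT^{(s,t)}T^{(\overline s,-t)}K_v=K_v$ on $D$, forcing $T^{(s,t)}$ to be unitary with inverse $cT^{(\overline s,-t)}$. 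The main technical obstacle is the Gaussian integral in part (b), where one must decode $|\exp(\cdots z^2)|^2$ as a real quadratic form and verify that its determinant simplifies to the clean $(\alpha-1)^2/(\alpha|s|^2)$; the secondary delicate point is the $c\cdot c'=+1$ check in part (c) across the two branches of the complex square root.
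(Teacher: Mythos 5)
Your proposal is correct, and parts (a) and (b) track the paper's own proof closely: the paper also tests on kernel functions (it uses the constant $1=K_0$ for the regime $|s|\le|t|<2|s|$ and general $K_u$ via Lemma~\ref{lem3} and Corollary~\ref{cor1} for $0<|s|^2-|t|^2<1$), and its Hilbert--Schmidt computation is exactly your $\inc\|K_z^{(\overline s,-t)}\|^2\,d\lambda(z)$ written as the double integral $\iint|K^{(s,t)}(z,w)|^2\,d\lambda\,d\lambda$ and evaluated with Corollary~\ref{cor1} and the Gaussian identity \eqref{eqIC} rather than by diagonalizing a real quadratic form. The genuine divergence is in part (c). The paper first proves boundedness of $T^{(s,t)}$ on all of $L^2(\C,d\lambda)$ by Schur's test with the weight $h(z)=e^{|z|^2/4}$ (obtaining the cruder bound $2\sqrt{|s|}$), and only then uses Lemma~\ref{lem3} on the dense span of kernels to get the two-sided inverse $cT^{(\overline s,-t)}$ and the adjoint identification, concluding unitarity from $T^*T=TT^*=I$. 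You instead compute $\langle K_u^{(s,t)},K_v^{(s,t)}\rangle$ directly from Lemma~\ref{lem2} with $(s_1,t_1)=(\overline s,-t)$, $(s_2,t_2)=(s,t)$ (the composition collapses to $(1,0)$ and the correction exponentials vanish when $|s|^2=|t|^2+1$), getting an isometry on the dense span, and then use Lemma~\ref{lem3}(b) for dense range; your sign check $c\cdot c'=+1$, including the branch $s<0$, does work out. Your route is more economical and yields the sharp norm $\|T^{(s,t)}\|=1$ on $F^2$ immediately, but it says nothing about $L^2(\C,d\lambda)$; the paper's Schur-test detour buys the extra fact that $T^{(s,t)}$ is bounded on the full unweighted-by-analyticity space, which is of independent interest. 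Your care in checking that the continuous extension of the isometry agrees with the integral formula (via the pointwise bound $|T^{(s,t)}f(z)|\le\|f\|\,\|K_z^{(\overline s,-t)}\|$) is exactly the point that such an argument needs, so no gap remains.
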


\begin{proof}
If $|s|\le|t|<2|s|$, then for the constant function $f=1$ in $F^2$ we have
$$T^{(s,t)}f(z)=\frac1{\sqrt s}\,e^{tz^2/(2s)},$$
which does not belong to $F^2$, because it is a nonzero function of order $2$ and type greater than or
equal to $1/2$. See Theorem 2.12 of \cite{Zhu1}. This shows that $T^{(s,t)}$ is unbounded on $F^2$
when $|s|\le|t|<2|s|$.

Next we assume $0<|s|^2-|t|^2<1$. By Lemma~\ref{lem3} and Corollary~\ref{cor1}, we have
\begin{multline*}
    \|T^{(s,t)}K_u\|=\|K_u^{({s},{t})}\|=\\
    \frac{1}{\sqrt[4]{|s|^2-|t|^2}}
    \exp\left[\frac{|{u}|^2}{2(|s|^2-|t|^2)}\right]
    \left|\exp\left[\frac{t(|t|^2+1-|s|^2)}{2\overline{s}(|s|^2-|t|^2)}{u^2}\right]\right|.
\end{multline*}
If $T^{(s, t)}$ is bounded on $F^2$, then there would exist a constant $C>0$ such that
$$\|T^{(s,t)}K_u\|\leq C\|K_u\|=C e^{|u|^2/2},\qquad u\in\C.$$
With the change of variables $u=\sqrt{\frac{2(|s|^2-|t|^2)}{|t|^2+1-|s|^2}}\,z$, this would give
$$|e^{tz^2/\overline s}|\leq C\, \sqrt[4]{|s|^2-|t|^2}\, e^{-|z|^2},\qquad z\in\C,$$
which is clearly impossible. This together with the conclusion in the previous paragraph proves (a).

If $|s|^2-|t|^2>1$, it follows from Corollary~\ref{cor1} that
\begin{multline*}
\inc\inc |K^{(s,t)}(z,w)|^2\,d\lambda(z)\,d\lambda(w)\\
=\frac{1}{\sqrt{|s|^2-|t|^2}}\inc\left|\exp\left[\frac{t(|t|^2+1-|s|^2)}{2\overline{s}(|s|^2-|t|^2)}{w^2}
\right]\right|^2\exp\left[\frac{|{w}|^2}{|s|^2-|t|^2}\right]\,d\lambda(w).
\end{multline*}
With the change of variables $u=\sqrt{\frac{|s|^2-|t|^2-1}{|s|^2-|t|^2}}\,w$ and the help of \eqref{eqIC},
we obtain
\begin{align*}
\inc\inc |K^{(s,t)}(z,w)|^2\,d\lambda(z)\,d\lambda(w)
&=\frac{\sqrt{|s|^2-|t|^2}}{|s|^2-|t|^2-1}
\inc\left|e^{-tu^2/(2\overline s)}\right|^2\,d\lambda(u)\\
&=\frac{|s|}{|s|^2-|t|^2-1}<\infty.
\end{align*}
Therefore, $T^{(s,t)}$ belongs to the Hilbert-Schmidt class $S_2$ and part (b) is proved.

To prove (c) we assume $|s|^2-|t|^2=1$. We will actually prove the boundedness of $T^{(s, t)}$
on $L^2(\C,d\lambda)$. We do this with the help of Schur's test; see \cite[Lemma 2.14]{Zhu1}.

The boundedness of $T^{(s,t)}$ on $L^2(\C, d\lambda)$ will follow if we can show that the
following operator, whose kernel is positive (which is necessary for Schur's test), is bounded
on $L^2(\C, d\lambda)$:
$$Q^{(s, t)}f(z)=\inc\left|K^{(s,t)}(z,w)\right|f(w)\,d\lambda(w).$$
To this end, we consider the positive function $h(z)=e^{|z|^2/4}$.
Making a change of variables and using \eqref{eqIC} again, we get
\begin{align*}
    \inc&\left|K^{(s,t)}(z,w)\right|h(w)^2\,d\lambda(w)\\
    &\qquad=\frac{1}{\sqrt{|s|}}\inc \left|\exp\left[\frac{t}{4s}z^2-\frac{\overline{t}}{4s}\overline{w}^2
    +\frac{ z\overline{w}}{2s}\right]\right|^2e^{\frac{|w|^2}{2}}\,d\lambda(w)\\
    &\qquad=\frac{{2}}{\sqrt{|s|}}\left|\exp\left(\frac{t}{4s}z^2\right)\right|^2\inc \left|\exp\left[-\frac{\overline{t}}{2s}\overline{u}^2+\frac{ z\overline{u}}{\sqrt{2}s}\right]\right|^2\,d\lambda(u)\\
    &\qquad=2\sqrt{|s|}\left|\exp\left(\frac{t}{4s}z^2\right)\right|^2\exp\left[-\frac{\overline{t}}{4\overline{s}}\overline{z}^2-\frac{t}{4s}z^2+\frac{|z|^2}{2}\right]\\
    &\qquad=2\sqrt{|s|}\,h(z)^2.
\end{align*}
Similarly,
\begin{align*}
    \inc&\left|K^{(s,t)}(z,w)\right|h(z)^2\,d\lambda(z)\\
    &\qquad=\frac{1}{\sqrt{|s|}}\inc \left|\exp\left[\frac{t}{4s}z^2-\frac{\overline{t}}{4s}\overline{w}^2+\frac{ z\overline{w}}{2s}\right]\right|^2e^{\frac{|z|^2}{2}}\,d\lambda(z)\\
    &\qquad=\frac{{2}}{\sqrt{|s|}}\left|\exp\left(-\frac{\overline{t}}{4s}\overline{w}^2\right)\right|^2\inc
    \left|\exp\left[\frac{{t}}{2s}{u}^2+\frac{\overline{w }u}{\sqrt{2}s}\right]\right|^2d\,\lambda(u)\\
    &\qquad=2\sqrt{|s|}\,h(w)^2.
\end{align*}
Therefore, Schur's test tells us that $Q^{(s, t)}$ is bounded on $L^2(\C,d\lambda)$ with its norm not
exceeding $2\sqrt{|s|}$.

We next focus on the case of $F^2$. It follows from Lemma~\ref{lem3} that
$$T^{(s, t)}cT^{(\overline{s},-t)} K_u= cT^{(s, t)}K_u^{(\overline{s},-t)}=K_u$$
and
$$cT^{(\overline{s},-t)}T^{(s, t)} K_u= cT^{(\overline{s},-t)}K_u^{(s, t)}=K_u.$$
Since the set of finite linear combinations of kernel functions is dense in $F^2$, we conclude that
$$T^{(s, t)}cT^{(\overline{s},-t)}=cT^{(\overline{s},-t)}T^{(s, t)}=I.$$
By Lemma~\ref{lem1}, the adjoint operator of $T^{(s, t)}$ is given by
$$\left[T^{(s, t)}\right]^{*}f(z)=\inc \overline{K^{(s,t)}(w,z)}f(w)\,d\lambda(w)=cT^{(\overline{s}, -t)}f(z).$$
Thus $T^{(s, t)}$ is unitary on $F^2$. This completes the proof of the theorem.
\end{proof}

\section{Eigenvalues and eigenvectors of $T^{(s,t)}$}

Let $\D=\{z\in\C: |z|<1\}$ be the unit disk in the complex plane and let $\T=\partial\D$ be the unit circle.
For any fixed $(s, t)\in\C^2$ with $|s|^2=1+|t|^2$, the operator $T^{(s, t)}$ is unitary, so its spectrum
$\sigma(T^{(s,t)})$ is contained in $\T$. When $|\re s|<1$, we will find a sequence of eigenvalues for
$T^{(s,t)}$ together with certain corresponding eigenfunctions. In some special cases, this will imply
that $\sigma(T^{(s,t)})=\T$.

We first observe that for $f(z)=e^{\gamma z^2/2}$, where $\gamma\in\D$, we have
\begin{align*}
    T^{(s,t)}f(z)&=\frac{1}{\sqrt{s}}\inc\exp\left[\frac{\gamma}{2}w^2+
    \frac{t}{2s}z^2-\frac{\overline{t}}{2s}\overline{w}^2+\frac{ z\overline{w}}{s}\right]\,d\lambda(w)\\
    &=\frac{1}{\sqrt{s}}\,\sqrt{\frac{s}{s+\gamma{\overline{t}}}}\,\exp\left[\frac{t}{2s}z^2
    +\frac{ \gamma z^{2}}{2s(s+\gamma{\overline{t}})}\right].
\end{align*}
Therefore, if $\gamma$ is a solution of the equation
\begin{equation}
s\overline{t}\gamma^2+(s^2-1-|t|^2)\gamma-st=0,
\label{choiceofgamma}
\end{equation}
then the unitary operator $T^{(s, t)}$ on $F^2$ has an eigenvalue
$$\lambda_0=\frac{1}{\sqrt{s}}\,\sqrt{\frac{s}{s+\gamma{\overline{t}}}}$$
with the function $e^{\gamma z^2/2}$ as a corresponding eigenvector.

\begin{lemma}\label{existenceofgamma}
Let $(s,t)\in\C^2$ with $|s|^2=1+|t|^2$. Then the equation (\ref{choiceofgamma}) has a
unique solution $\gamma$ in $\D$ if and only if $|\re s|<1$.
\end{lemma}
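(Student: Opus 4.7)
\medskip

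The plan is to analyze the quadratic (\ref{choiceofgamma}) directly. Note that for fixed $(s,t)$ with $|s|^2 = 1 + |t|^2$ and $t\neq 0$, this is a genuine quadratic in $\gamma$, and by Vieta's formulas the product of its two roots equals $-st/(s\overline{t}) = -t/\overline{t}$, which has modulus $1$. Consequently the two roots either both lie on $\T$ (in which case $\D$ contains none of them) or one lies inside $\D$ and the other outside. Thus the claim ``unique root in $\D$'' is equivalent to ``the roots are not both on $\T$'', and I need to identify when this occurs in terms of $\re s$.

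First I would dispose of the degenerate case $t=0$. Here $|s|=1$ and the equation collapses to $(s^2-1)\gamma = 0$. This has the unique solution $\gamma=0\in\D$ precisely when $s^2\neq 1$, i.e., $s\neq\pm 1$, which under $|s|=1$ is exactly $|\re s|<1$.

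For $t\neq 0$, the crucial simplification uses the constraint $|t|^2=|s|^2-1$. The coefficient of $\gamma$ becomes
$$s^2 - 1 - |t|^2 = s^2 - |s|^2 = s(s-\overline{s}) = 2i(\im s)\,s,$$
so the discriminant of (\ref{choiceofgamma}) is
$$\Delta = (2i(\im s)s)^2 + 4s^2\overline{t}\cdot st = -4(\im s)^2 s^2 + 4s^2|t|^2 = 4s^2\bigl((\re s)^2-1\bigr),$$
after using $|t|^2 = |s|^2 - 1 = (\re s)^2 + (\im s)^2 - 1$. Dividing the quadratic formula through by $s$, the two roots take the compact form
$$\gamma_{\pm} = \frac{-i\im s \pm \sqrt{(\re s)^2-1}}{\overline{t}}.$$

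From this explicit formula everything follows quickly. A direct computation of $|\gamma_{\pm}|^2|t|^2$ in the two cases gives
$$|\gamma_{\pm}|^2|t|^2 = (\im s)^2 + (\re s)^2 - 1 = |t|^2 \quad\text{when } |\re s|\ge 1,$$
so both roots sit on $\T$; while for $|\re s|<1$, writing $\sqrt{(\re s)^2-1}=i\sqrt{1-(\re s)^2}$, one obtains
$$|\gamma_{\pm}|^2|t|^2 = \bigl(\im s \mp \sqrt{1-(\re s)^2}\bigr)^2,$$
which are strictly unequal (since $t\neq 0$ forces $(\im s)^2 > 1-(\re s)^2$, hence $\im s \neq 0$ and $\sqrt{1-(\re s)^2}\neq 0$ unless $\re s=\pm 1$). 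The product $|\gamma_+||\gamma_-|$ still equals $1$ by Vieta, so exactly one root lies in $\D$.

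The main obstacle, really the only substantive step, is spotting the clean factorization of the discriminant. The rest is bookkeeping once one recognizes that $s^2 - 1 - |t|^2 = 2i(\im s)\,s$ under the normalization $|s|^2=1+|t|^2$, which is what forces $\Delta$ to be the perfect square $4s^2((\re s)^2-1)$ and thereby cleanly separates the cases $|\re s|\gtreqless 1$.
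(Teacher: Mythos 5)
Your proof is correct and follows essentially the same route as the paper: dispose of $t=0$ separately, then solve the quadratic explicitly, using $s^2-1-|t|^2=2i(\im s)s$ to get the discriminant $4s^2((\re s)^2-1)$ and the roots $(-i\im s\pm\sqrt{(\re s)^2-1})/\overline{t}$, and split on $|\re s|\gtrless 1$. The one small refinement over the paper's argument is your use of Vieta's formula (product of roots has modulus $1$) to reduce the $|\re s|<1$ case to checking that the two roots have distinct moduli, whereas the paper verifies directly which explicit root lies in $\D$ and which does not.
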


\begin{proof}
If $t=0$, then $|s|=1$ and the equation (\ref{choiceofgamma}) becomes $(s^2-1)\gamma=0$, which
has a unique solution $\gamma=0\in\D$ if and only if $s^2-1\not=0$. It is clear that, under the
assumption $|s|=1$, the condition $s^2\not=1$ is equivalent to $|\re s|<1$.

If $t\neq0$, then by the quadratic formula, the solutions of (\ref{choiceofgamma}) are given by
$$\gamma=\frac{-(s-\overline{s})\pm\sqrt{(s+\overline{s})^2-4}}{2\overline{t}}.$$
Writing $s=x+iy$, we have
$$\gamma=\frac{-iy\pm\sqrt{x^2-1}}{\overline t},\qquad
|\gamma|^2=\frac{\left|-iy\pm\sqrt{x^2-1}\right|^2}{|t|^2},$$
and
$$s+\gamma\overline t=x\pm\sqrt{x^2-1}.$$
If $|\re s|\ge1$, then both solutions of (\ref{choiceofgamma}) satisfy
$$|\gamma|^2=\frac{y^2+x^2-1}{|t|^2}=\frac{|s|^2-1}{|t|^2}=1,$$
so (\ref{choiceofgamma}) does not have a solution in the unit disk.
We now consider the case when $|\re s|<1$. Since $|s|^2=x^2+y^2>1$,
it is easy to check that the solution
\begin{equation}\label{eqgamma}
    \gamma_1=\frac{-y+  {\rm sgn} (y)\sqrt{1-x^2}}{\overline{t}}\,i
\end{equation}
satisfies
$$|\gamma_1|^2=\frac{y^2+1-x^2-2|y|\sqrt{1-x^2}}{x^2+y^2-1}<1.$$
Also,
$$s+\gamma_1\overline t=x+i\,{\rm sgn}(y)\sqrt{1-x^2},\qquad
|s+\gamma_1\overline t|=1.$$
It is also easy to check that the other solution
$$\gamma_2=\frac{-y-{\rm sgn}(y)\sqrt{1-x^2}}{\overline t}\,i$$
satisfies $|\gamma_2|\ge1$. This completes the proof of the lemma.
\end{proof}

When $|s|^2=1+|t|^2$ with $|\re s|<1$, we will modify $\lambda_0$ above to obtain additional eigenvalues
for the unitary operator $T^{(s,t)}$ on $F^2$. We will also obtain a corresponding eigenvector for each such
eigenvalue. To this end, we will need a 
new characterization of the classical Hermite polynomials.

\begin{lemma}\label{lemcal}
Let $\mu\in\C$ with $\re \mu>0$ and let $n$ be a nonnegative integer. Then
$$\int_{\R} x^{n}e^{-\mu(x+z)^{2}}\,dx=c_{0}z^{n}+c_{1}z^{n-1}+\cdots+c_{n}$$
for all $z\in\C$, where $c_k=0$ when $k$ is odd and
$$c_k=(-1)^{n-k}\frac{n!\,\Gamma(\frac{k+1}{2})}{k!\,(n-k)!\,(\sqrt{\mu})^{k+1}}$$
when $k$ is even. 
\end{lemma}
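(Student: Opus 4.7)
The plan is direct: reduce the integral to standard one-dimensional Gaussian moments by a linear substitution, then expand binomially and read off coefficients.

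Both sides of the asserted equality are entire functions of $z\in\C$ (for the left side, one can differentiate under the integral sign thanks to the Gaussian decay $|e^{-\mu(x+z)^{2}}|=\exp(-\re\mu\,(x+\re z)^{2}+O(1))$, which holds since $\re\mu>0$), so it suffices to establish the identity for real $z$. For $z\in\R$ the substitution $u=x+z$ is innocuous and gives
$$\int_\R x^{n}e^{-\mu(x+z)^{2}}\,dx=\int_\R (u-z)^{n}e^{-\mu u^{2}}\,du.$$
Expanding $(u-z)^{n}$ by the binomial theorem and integrating term by term, the coefficient of $z^{n-j}$, that is $c_{j}$, comes out to
$$c_{j}=(-1)^{n-j}\binom{n}{j}\int_\R u^{j}e^{-\mu u^{2}}\,du.$$

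What remains is the evaluation of the moment $I_{j}:=\int_\R u^{j}e^{-\mu u^{2}}\,du$. Odd powers vanish by parity, which gives $c_{j}=0$ for $j$ odd. For even $j$ I will use the classical identity
$$I_{j}=\frac{\Gamma(\frac{j+1}{2})}{(\sqrt\mu)^{j+1}},$$
which is immediate for real $\mu>0$ by the substitution $v=\sqrt\mu\,u$ and a gamma-function computation, and extends to all $\mu$ with $\re\mu>0$ by analytic continuation (both sides being holomorphic in the right half-plane with the principal branch of $\sqrt\mu$). Substituting this back, together with $\binom{n}{j}=n!/(j!(n-j)!)$, produces exactly the claimed closed form
$$c_{j}=(-1)^{n-j}\frac{n!\,\Gamma(\frac{j+1}{2})}{j!\,(n-j)!\,(\sqrt\mu)^{j+1}}.$$

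The main obstacle is essentially clerical: keeping the indexing aligned with the convention $c_{0}z^{n}+\cdots+c_{n}$ (so that $c_{j}$ indexes the coefficient of $z^{n-j}$, not $z^{j}$) and handling the complex square root correctly via the principal branch. The substantive ingredients are only the binomial theorem, the Gaussian moment formula, and a brief analytic-continuation argument, none of which poses a genuine difficulty.
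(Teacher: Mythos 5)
Your proof is correct, and it reaches the coefficients by a genuinely different route from the paper's. You translate the variable of integration ($u=x+z$, harmless for real $z$, then extended to all $z\in\C$ since both sides are entire) and expand $(u-z)^n$ by the binomial theorem, so each $c_j$ appears directly as $(-1)^{n-j}\binom{n}{j}\int_\R u^je^{-\mu u^2}\,du$. The paper never shifts the contour: it differentiates $I(z)=\int_\R x^ne^{-\mu(x+z)^2}\,dx$ under the integral sign, converts $\partial/\partial z$ into $\partial/\partial x$, and integrates by parts to obtain $I^{(j)}(z)=(-1)^j\frac{n!}{(n-j)!}\int_\R x^{n-j}e^{-\mu(x+z)^2}\,dx$; since $I^{(n)}$ is constant, $I$ is a polynomial of degree $n$ and $c_k=I^{(n-k)}(0)/(n-k)!$ reduces to the same Gaussian moments. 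Your version is the more elementary of the two; the paper's has the side benefit that the differentiation-plus-integration-by-parts identity is precisely the device reused for general polynomials in the proof of Theorem~\ref{thmHit} (see \eqref{eqDP} and \eqref{eqDk}), so the lemma's proof doubles as a rehearsal for that argument. Both proofs ultimately rest on the moment formula $\int_\R u^{2m}e^{-\mu u^2}\,du=\Gamma(m+\tfrac12)/(\sqrt\mu)^{2m+1}$ for $\re\mu>0$ with the principal square root, which you correctly obtain by analytic continuation from real $\mu>0$. One small correction to your justification of analyticity in $z$: for fixed $z$ one has $|e^{-\mu(x+z)^2}|=\exp\bigl(-\re\mu\,(x+\re z)^2+O(1+|x|)\bigr)$ rather than $O(1)$, because the cross term $-2\,\im\mu\,(x+\re z)\,\im z$ is linear in $x$; the Gaussian factor still dominates such a term, so the dominated-convergence argument goes through unchanged.
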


\begin{proof}

Consider the integral
$$I(z)=\inr x^{n}e^{-u(x+z)^{2}}\,dx, \qquad z\in\C.$$
Since $\re \mu>0$, it is clear that $I(z)$ is an entire function. Moreover, differentiating under the integral
sign, we obtain
\begin{eqnarray*}
I'(z)&=&\inr x^{n}\frac{\partial}{\partial z}e^{-u(x+z)^2}\,dx\\
&=&\inr x^{n}\frac{\partial}{\partial x}e^{-u(x+z)^2}\,dx\\
&=&-n\inr x^{n-1}e^{-u(x+z)^{2}}\,dx.
\end{eqnarray*}
Repeat this process, we will then get
$$I^{(j)}(z)=n(n-1)\cdots(n-j+1)(-1)^{j}\inr x^{n-j} e^{-u(x+z)^{2}}\,dx$$
for $j=2,3,\cdots,n$. Combining this with Lemma 2 of \cite{DZ}, we obtain
$$I^{(n)}(z)=(-1)^{n}n!\inr e^{-u(x+z)^{2}}\,dx=(-1)^{n}n!\,\frac{\sqrt{\pi}}{\sqrt{u}}$$
for all $z\in\C$.
From this, we deduce that $$I(z)=c_{0}z^{n}+c_{1}z^{n-1}+\cdots+c_{n-1}z+c_{n}$$
with
$$c_k=\frac{I^{(n-k)}(0)}{(n-k)!}
=(-1)^{n-k}\frac{n!}{k!(n-k)!}\inr x^{k} e^{-ux^{2}}\,dx$$
for any $k=0,\cdots,n$. This proves the desired result.
\end{proof}

We are now ready to prove the following interesting result which is more general than Theorem E.

\begin{theorem}\label{thmHit}
Let $\mu,\nu, a, b\in\C$ with $a\not=0$, $b\neq 0$, $\re \mu>0$, $\re \nu>0$, and
$\nu b^2\neq\mu a^2$. Suppose
\begin{equation}\label{eqHermieeq}
\delta=\frac{(b^2-a^2)\mu\nu}{\nu b^2-\mu a^2}
\end{equation}
and
$$P_n(x)=c_0x^n+c_1x^{n-1}+\cdots+c_n,\qquad c_0\not=0.$$
If $a^{k}\neq b^{k}$ for $1\le k\leq n$, then the following conditions are equivalent:
\begin{itemize}
  \item[(a)] $P_n(x)$ is the result of the recursion
  $$P_k(x)=2xP_{k-1}(x)-\frac{1}{\delta} P^{\prime}_{k-1}(x),\qquad 1\le k\le n,$$
  where $P_0(x)=C$ is a nonzero constant. 
  \item[(b)] $P_n(x)$ is a solution of the integral equation
\begin{equation}\label{eqHermite}
    \inr P_n(x)e^{-\mu \left(x-az\right)^2}dx=C_n \inr P_n(x)e^{-\nu \left(x-bz\right)^2}dx
\end{equation}
for some constant $C_n$.
  \item[(c)] $c_{2k+1}=0$ whenever $2k+1\leq n$, and
$$(n-2k)!\,c_{2k}=-2\delta(2k+2)(n-2k-2)!\,c_{2k+2}$$
whenever $2k+2\leq n$.
\end{itemize}
Furthermore, in the cases above, we have
$$C_n=\frac{\sqrt{\nu}}{\sqrt{\mu}}\left(\frac{a}{b}\right)^n$$ and
\begin{equation}\label{eqDeHe}
    P_n(x)=C\sum_{k=0}^{[\frac{n}{2}]}\frac{(-1)^k2^nn!}{4^kk!\,(n-2k)!\,\delta^k} x^{n-2k}
\end{equation}
with $C =c_0 2^{-n}\neq 0$. 
\end{theorem}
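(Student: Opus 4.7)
The plan is to establish the equivalences (a) $\Leftrightarrow$ (c) $\Leftrightarrow$ (b), and to extract the formulas for $C_n$ and \eqref{eqDeHe} along the way. I would obtain (a) $\Leftrightarrow$ (c) by a direct coefficient computation from the recursion, (c) $\Rightarrow$ (b) by evaluating both integrals via Lemma~\ref{lemcal}, and (b) $\Rightarrow$ (c) by matching coefficients of the resulting polynomial identity in $z$. The hypothesis $a^k \neq b^k$ for $1 \leq k \leq n$ enters only in the last implication, where it guarantees that the triangular system produced by the matching has nonzero diagonal entries.

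For (a) $\Leftrightarrow$ (c), I would first derive a closed form for the polynomial $P_n$ generated by iterating the recursion from $P_0 = C$. The quickest route is to pass to the exponential generating function $f(x,t) = \sum_n P_n(x)\, t^n/n!$, which satisfies the first-order PDE $\partial_t f = (2x - \delta^{-1}\partial_x) f$ with $f(x,0) = 1$; this is solved by $f(x,t) = e^{2xt - t^2/\delta}$, and expanding the double series yields \eqref{eqDeHe}. Reading off coefficients from \eqref{eqDeHe} then shows $c_{2k+1} = 0$ and the two-term relation in (c). Conversely, given a choice of $c_0$, the relations in (c) determine all remaining coefficients uniquely, producing the same polynomial as (a).

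For (c) $\Rightarrow$ (b), I would apply Lemma~\ref{lemcal} (with the substitutions $z \mapsto -az$ and $z \mapsto -bz$) to express each $\inr x^{n-j}e^{-\mu(x-az)^2}\,dx$ as an explicit polynomial in $z$, and similarly on the $\nu$, $b$ side; summing against the $c_j$'s then writes both sides of \eqref{eqHermite} as polynomials of degree $n$ in $z$ in which only powers of matching parity survive. Comparing the coefficients of $z^n$ immediately gives $C_n = \sqrt{\nu/\mu}\,(a/b)^n$. For each lower order, matching the coefficient of $z^{n-2m}$ reduces, via the recursion in (c), to a single algebraic identity in which the combination $(b^2-a^2)\mu\nu/(\nu b^2-\mu a^2)$, namely $\delta$, appears as a prefactor; this is exactly what makes the two sides agree term by term. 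For the reverse direction, the same expansion turns \eqref{eqHermite} into a polynomial identity in $z$, and matching the coefficient of $z^{n-m}$ gives a linear equation in $c_0, c_1, \ldots, c_m$ whose diagonal entry (the coefficient of $c_m$) is a nonzero multiple of $b^m - a^m$. Under the hypothesis $a^m \neq b^m$, this determines $c_m$ in terms of smaller indices, forcing $c_m = 0$ for odd $m$ and the recursion of (c) for even $m$. The main technical obstacle is the careful bookkeeping needed to identify $(b^2-a^2)\mu\nu/(\nu b^2-\mu a^2)$ as the universal prefactor in both directions; once this is done, the verification is purely algebraic.
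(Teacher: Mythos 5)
Your proposal is correct in substance but organizes the proof differently from the paper, and in one place it glosses over the real computational core. The paper proves the cycle (a)$\Rightarrow$(b)$\Rightarrow$(c)$\Rightarrow$(a): it gets (a)$\Rightarrow$(b) by induction on $n$, using differentiation under the integral sign to show that if $P_n$ solves \eqref{eqHermite} with constant $C_n$ then $2xP_n-\delta^{-1}P_n'$ solves it with constant $(a/b)C_n$; it gets (c)$\Rightarrow$(a) by brute-force verification of the recursion on the closed form \eqref{eqDeHe}. Your generating-function derivation of \eqref{eqDeHe} from the recursion (solving $\partial_t f=(2x-\delta^{-1}\partial_x)f$ to get $f=Ce^{2xt-t^2/\delta}$) is a genuinely slicker route to (a)$\Leftrightarrow$(c) and is correct. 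Your (b)$\Rightarrow$(c) via a triangular system is also essentially what the paper does: differentiating both sides $n-m$ times and comparing constant terms, as the paper does, is literally comparing coefficients of $z^{n-m}$, and your identification of the diagonal entry as a nonzero multiple of $b^m-a^m$ is right.

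The place where you are too optimistic is the claim that the linear equation at even order $m$ "forces the recursion of (c)." That equation expresses $c_m$ in terms of \emph{all} of $c_0,c_2,\dots,c_{m-2}$, not just $c_{m-2}$; collapsing it to the two-term relation is exactly the hard part of the paper's proof, which requires recognizing the sum $\sum_{j}\binom{k+1}{j}(-\delta/\mu)^j$ as $(1-\delta/\mu)^{k+1}$ and then using $(1-\delta/\mu)b^2=(1-\delta/\nu)a^2$ (equivalent to \eqref{eqHermieeq}). The same binomial identity is what you are waving at in your (c)$\Rightarrow$(b) step when you say the coefficient match "reduces to a single algebraic identity" --- it is the content of the paper's corollary evaluating $\inr P_n(x)e^{-\mu(x-az)^2}dx$ in closed form. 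Within your framework there is a clean way to avoid redoing the telescoping in the (b)$\Rightarrow$(c) direction: once (c)$\Rightarrow$(b) is established with $C_n=\sqrt{\nu/\mu}\,(a/b)^n$, the triangular system with nonzero diagonal shows the solution of \eqref{eqHermite} is unique up to the scalar $c_0$, so it must coincide with the polynomial from (c). You should make that uniqueness argument explicit (or do the telescoping); as written, the step from the multi-term linear relation to the two-term recursion is asserted rather than proved.
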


\begin{proof}
We begin with some elementary calculations. Differentiating under the integral sign and using integration
by parts, we obtain
\begin{align}\label{eqDP}
    \frac{d}{dz}\left(\inr P_n(x)e^{-\mu \left(x-az\right)^2}dx\right)&=2\mu a\inr \left(x-az\right)P_n(x)e^{-\mu \left(x-az\right)^2}dx\nonumber\\
    &=a\inr P^{\prime}_n(x)e^{-\mu \left(x-az\right)^2}dx.
\end{align}
Continuing this process $k\leq n$ times, we obtain
\begin{equation}\label{eqDk}
    \frac{d^k}{dz^k}\left(\inr P_n(x)e^{-\mu \left(x-az\right)^2}dx\right)
    =a^k\inr P^{(k)}_n(x)e^{-\mu \left(x-az\right)^2}dx.
\end{equation}
It follows from \eqref{eqDP} that
\begin{multline*}
    \inr 2x P_n(x)e^{-\mu \left(x-az\right)^2}dx\\
    =\frac{1}{\mu}\inr P^{\prime}_n(x)e^{-\mu \left(x-az\right)^2}dx
    +2az\inr P_n(x)e^{-\mu \left(x-az\right)^2}dx,
\end{multline*}
which implies that
 \begin{multline}\label{eqDPP}
    \inr \left[2xP_n(x)-\frac{1}{\delta}P^{\prime}_n(x)\right]e^{-\mu \left(x-az\right)^2}dx\\
    =\frac{\delta-\mu}{\mu\delta}\inr P^{\prime}_n(x)e^{-\mu \left(x-az\right)^2}dx+2az\inr P_n(x)e^{-\mu \left(x-az\right)^2}dx
\end{multline}
whenever $\delta\neq 0$.

To show that (a) implies (b), we use induction on $n$. When $n=0$, it is clear from Lemma~\ref{lemcal}
that $P_0(x)=C$, where $C$ is any nonzero constant, is a solution of equation \eqref{eqHermite} with $C_0={\sqrt{\nu}}/{\sqrt{\mu}}$.
Similarly, $P_1(x)=2Cx$ is a solution of equation \eqref{eqHermite} with $$C_1=\frac{\sqrt{\nu}}{\sqrt{\mu}}\frac{a}{b}.$$
So fix $n\geq 1$ and assume that the $n$th degree polynomial
$P_n(x)$ is a solution of equation \eqref{eqHermite}. Then we consider the $(n+1)$st degree polynomial
$$P_{n+1}(x)=2xP_n(x)-\frac{1}{\delta}P^{\prime}_n(x).$$
Differentiate both sides of \eqref{eqHermite}
and use \eqref{eqDP} to obtain
$$\inr P^{\prime}_n(x)e^{-\mu \left(x-az\right)^2}dx=\frac{bC_n}{a} \inr P^{\prime}_n(x)e^{-\nu \left(x-bz\right)^2}dx.$$
Combining the above identity with \eqref{eqHermite} and \eqref{eqDPP}, we obtain
\begin{multline*}
    \inr P_{n+1}(x) e^{-\mu \left(x-az\right)^2}dx\\
    =\frac{b C_n(\delta-\mu)}{a\mu\delta}\inr P^{\prime}_n(x)e^{-\nu \left(x-bz\right)^2}dx+2aC_n z\inr P_n(x)e^{-\nu \left(x-bz\right)^2}dx.
\end{multline*}
By an argument similar to that given for \eqref{eqDPP} we get
\begin{multline*}
    \inr P_{n+1}(x) e^{-\nu \left(x-bz\right)^2}dx\\
    =\frac{\delta-\nu}{\nu\delta}\inr P^{\prime}_n(x)e^{-\nu \left(x-bz\right)^2}dx+2b z\inr P_n(x)e^{-\nu \left(x-bz\right)^2}dx.
\end{multline*}
Clearly, if we define $C_{n+1}$ by $aC_n=C_{n+1}b$, or equivalently,
$$C_{n+1}=\frac{\sqrt{\nu}}{\sqrt{\mu}}\left(\frac{a}{b}\right)^{n+1}$$ by induction, then it follows from \eqref{eqHermieeq} that
$$\frac{bC_n(\mu-\delta)}{a\mu}=\frac{C_{n+1}(\nu-\delta)}{\nu }.$$
Therefore,
$$\inr P_{n+1}(x)e^{-\mu \left(x-az\right)^2}dx=C_{n+1} \inr P_{n+1}(x)e^{-\nu \left(x-bz\right)^2}dx.$$
This completes the induction argument and proves that condition (a) implies (b).

To prove that (b) implies (c), we differentiate both sides of \eqref{eqHermite} $n$ times. It follows from \eqref{eqDk} and Lemma~\ref{lemcal} that
$$a^n n!\,c_0\,\frac{\sqrt{\pi}}{\sqrt{\mu}}=b^n n!\,c_0\,C_n\,\frac{\sqrt{\pi}}{\sqrt{\nu}},$$
which gives
\begin{equation}\label{eqCn}
    C_n=\frac{\sqrt{\nu}}{\sqrt{\mu}}\left(\frac{a}{b}\right)^n.
\end{equation}
Therefore, if $n=0$, there is nothing to prove. For $n\geq1$,
we differentiate both sides of \eqref{eqHermite} $n-1$ times and use \eqref{eqDk} to obtain
\begin{multline*}
    a^{n-1} \inr \left[n!\,c_0x+(n-1)!\,c_1\right] e^{-\mu \left(x-az\right)^2}dx\\
    =b^{n-1}C_n \inr \left[n!\,c_0x+(n-1)!\,c_1\right] e^{-\nu \left(x-bz\right)^2}dx.
\end{multline*}
Lemma~\ref{lemcal} shows that both sides of the above equation are first degree polynomials of $z$.
If we compare the constant terms of these two linear polynomials, it follows from the assumption $a\neq b$
and Lemma~\ref{lemcal} that
\begin{equation}\label{eqc1}
    c_1=0.
\end{equation}
Thus condition (c) holds for $n=1$. For $n\geq2$ we use \eqref{eqc1} to write
$$P_n(x)=c_0x^n+c_2x^{n-2}+\cdots+c_n.$$
Differentiating both sides of \eqref{eqHermite} $n-2$ times and using \eqref{eqDk}, we obtain
\begin{multline*}
    a^{n-2} \inr \left[\frac{n!\,c_0}{2}x^2+(n-2)!c_2\right] e^{-\mu \left(x-az\right)^2}dx\\
    =b^{n-2}C_n \inr \left[\frac{n!\,c_0}{2}x^2+(n-2)!c_2\right] e^{-\nu \left(x-bz\right)^2}dx.
\end{multline*}
Comparing the constant terms on both sides of the above equation and using Lemma~\ref{lemcal} and \eqref{eqCn}, we arrive at
$$\frac{n!\,c_0}{4\mu}+(n-2)!\,c_2=\frac{a^2}{b^2}\left(\frac{n!\,c_0}{4\nu}+(n-2)!\,c_2\right).$$
This together with \eqref{eqHermieeq} and the assumption $a^2\not=b^2$ yields
$$n!\,c_0=-4\delta(n-2)!\,c_2.$$
Therefore, condition (c) holds for $n=2$ as well.

When $n\geq3$, we will prove (c) by induction on $k$. Thus we assume
$$c_{2k-1}=0\qquad {\rm and} \qquad (n-2k+2)!\,c_{2k-2}=-4\delta k(n-2k)!\,c_{2k}$$
whenever $2k<n$. Then we have
$$P_n(x)=\sum_{j=0}^{k}c_{2j}x^{n-2j}+c_{2k+1}x^{n-2k-1}+\cdots+c_n.$$
Differentiating both sides of \eqref{eqHermite} $n-2k-1$ times and using \eqref{eqDk}, we obtain{\small
\begin{align*}
    &a^{n-2k-1} \inr \left[\sum_{j=0}^{k}\frac{(n-2j)!\,c_{2j}x^{2k-2j+1}}{(2k-2j+1)!}+(n-2k-1)!\,c_{2k+1}\right] e^{-\mu \left(x-az\right)^2}dx\\
    &=b^{n-2k-1}C_n \inr \left[\sum_{j=0}^{k}\frac{(n-2j)!\,c_{2j}x^{2k-2j+1}}{(2k-2j+1)!}+(n-2k-1)!\,c_{2k+1}\right] e^{-\nu \left(x-bz\right)^2}dx.
\end{align*}}
Note that $2k-2j+1$ is odd for all $0\leq j\leq k$.
Comparing the constant terms on both sides of the above equation and using Lemma~\ref{lemcal} again,
we arrive at
$$b^{2k+1}(n-2k-1)!\,c_{2k+1}=a^{2k+1}(n-2k-1)!\,c_{2k+1}.$$
Since $a^{2k+1}\neq b^{2k+1}$, we must have $c_{2k+1}=0$. Thus
$$P_n(x)=\sum_{j=0}^{k}c_{2j}x^{n-2j}+c_{2k+2}x^{n-2k-2}+\cdots+c_n.$$
If $2k+1<n$, we continue this process by differentiating both sides of \eqref{eqHermite} $n-2k-2$ times to
obtain{\small
\begin{align*}
    &a^{n-2k-2} \inr \left[\sum_{j=0}^{k}\frac{(n-2j)!\,c_{2j}x^{2k-2j+2}}{(2k-2j+2)!}+(n-2k-2)!\,c_{2k+2}\right] e^{-\mu \left(x-az\right)^2}dx\\
    &=b^{n-2k-2}C_n \inr \left[\sum_{j=0}^{k}\frac{(n-2j)!\,c_{2j}x^{2k-2j+2}}{(2k-2j+2)!}+(n-2k-2)!\,c_{2k+2}\right] e^{-\nu \left(x-bz\right)^2}dx.
\end{align*}}
Again, comparing the constant terms on both sides of the above equation, we get
\begin{align}\label{eq2k}
    &b^{2k+2} \left[\sum_{j=0}^{k}\frac{(n-2j)!\,c_{2j}}{(2k-2j+2)!}\frac{(2k-2j+1)!!}{2^{k-j+1}\mu^{k-j+1}}+(n-2k-2)!\,c_{2k+2}\right]\nonumber\\
    &=a^{2k+2} \left[\sum_{j=0}^{k}\frac{(n-2j)!\,c_{2j}}{(2k-2j+2)!}\frac{(2k-2j+1)!!}{2^{k-j+1}\nu^{k-j+1}}+(n-2k-2)!\,c_{2k+2}\right].
\end{align}
From the induction hypothesis we deduce that
\begin{multline*}
\frac{(n-2j+2)!\,c_{2j-2}}{(2k-2j+4)!}\,\frac{(2k-2j+3)!!}{2^{k-j+2}\mu^{k-j+2}}\\
=-\frac{j\delta}{(k-j+2)\mu}\left[\frac{(n-2j)!\,c_{2j}}{(2k-2j+2)!}\frac{(2k-2j+1)!!}{2^{k-j+1}\mu^{k-j+1}}\right]
\end{multline*}
for any $j=1,\cdots, k$. Consequently,{\small
\begin{align*}
&\sum_{j=0}^{k}\frac{(n-2j)!\,c_{2j}}{(2k-2j+2)!}\frac{(2k-2j+1)!!}{2^{k-j+1}\mu^{k-j+1}}-\frac{(n-2k)!\,c_{2k}}{2(2k+2)\delta}\\
&=-\frac{(n-2k)!\,c_{2k}}{2(2k+2)\delta}\left[1-\frac{(k+1)\delta}{\mu}+\frac{(k+1)k\delta^2}{2\mu^2}
    +\cdots+(-1)^{k+1}\frac{(k+1)k\cdots 1\, \delta^{k+1}}{1\cdot2\cdots (k+1)\mu^{k+1}}\right]\\
    &=-\frac{(n-2k)!\,c_{2k}}{2(2k+2)\delta}\left(1-\frac{\delta}{\mu}\right)^{k+1}.
\end{align*}}
By \eqref{eqHermieeq}, we have
$$\left(1-\frac{\delta}{\mu}\right)b^{2}=\left(1-\frac{\delta}{\nu}\right)a^{2}.$$
Combining this with \eqref{eq2k}, we get
\begin{multline*}
    b^{2k+2} \left[\frac{(n-2k)!\,c_{2k}}{2(2k+2)\delta}+(n-2k-2)!\,c_{2k+2}\right] \\
    =a^{2k+2}\left[\frac{(n-2k)!\,c_{2k}}{2(2k+2)\delta}+(n-2k-2)!\,c_{2k+2}\right].
\end{multline*}
Since $a^{2k+2}\neq b^{2k+2}$, we must have
$$(n-2k)!\,c_{2k}=-2\delta(2k+2)(n-2k-2)!\,c_{2k+2},$$
this completes the induction argument.

To finish the proof, we assume that condition (c) holds. Recall that the leading term of the polynomial
$P_n(x)$ is $c_0 x^n$. If we let $C =c_0 2^{-n}\neq 0$, then it is easy to see that
$$P_n(x)=C\sum_{k=0}^{[\frac{n}{2}]}\frac{(-1)^k2^nn!}{4^kk!(n-2k)!\delta^k}x^{n-2k}.$$
A straightforward calculation shows that
\begin{align*}
&\frac{1}{C}\left[2 xP_n(x)-\frac{1}{\delta} P^{\prime}_n(x)\right]-(2x)^{n+1}\\
&=\sum_{k=1}^{[\frac{n}{2}]}\frac{(-1)^k2^{n+1}n!}{4^kk!\,(n-2k)!\,\delta^k}x^{n-2k+1}
 -\sum_{k=1}^{[\frac{n}{2}]+1}\frac{(-1)^{k-1}2^nn!\,(n-2k+2)}{4^{k-1}(k-1)!(n-2k+2)!\,\delta^{k}}x^{n-2k+1}\\
 &=\sum_{k=1}^{[\frac{n}{2}]}\frac{(-1)^k2^{n+1}(n+1)!}{4^kk!\,(n-2k+1)!\,\delta^k}x^{n-2k+1}
   -\frac{(-1)^{[\frac{n}{2}]}2^nn!\left(n-2[\frac{n}{2}]\right)}{4^{[\frac{n}{2}]}\left([\frac{n}{2}]\right)!
   \left(n-2[\frac{n}{2}]\right)!\,\delta^{[\frac{n}{2}]+1}}x^{n-2[\frac{n}{2}]-1}.
\end{align*}
Consequently, if $n$ is even, then
\begin{align*}
2 xP_n(x)-\frac{1}{\delta} P^{\prime}_n(x)&=C\sum_{k=0}^{[\frac{n}{2}]}\frac{(-1)^k2^{n+1}(n+1)!}
{4^kk!\,(n-2k+1)!\delta^k}x^{n-2k+1}\\
&=P_{n+1}(x),
\end{align*}
and if $n$ is odd, then
\begin{align*}
2 xP_n(x)-\frac{1}{\delta} P^{\prime}_n(x)
&=C\sum_{k=0}^{[\frac{n}{2}]}\frac{(-1)^k2^{n+1}(n+1)!}{4^kk!\,(n-2k+1)!\,\delta^k}x^{n-2k+1}-\frac{C(-1)^{\frac{n-1}{2}}2^nn!}
{4^{\frac{n-1}{2}}\left(\frac{n-1}{2}\right)!\delta^{\frac{n+1}{2}}}\\
&=C\sum_{k=0}^{[\frac{n+1}{2}]}\frac{(-1)^k2^{n+1}(n+1)!}{4^kk!\,(n-2k+1)!\,\delta^k}x^{n-2k+1}\\
&=P_{n+1}(x).
\end{align*}
This proves that
condition (c) implies (a).
\end{proof}

Recall that for any nonnegative integer $n$, the $n$th Hermite polynomial $H_n(x)$ is
defined by
$$H_n(x)=(-1)^2 e^{x^2} \frac{d^n}{dx^n} e^{-x^2}.$$
In general, it is easy to check that each $H_n(x)$ has degree $n$ and
$$H_{n}(x)=2xH_{n-1}(x)-H^{\prime}_{n-1}(x),\qquad n\geq1,$$
which can be used to compute $H_n(x)$ inductively. Obviously, to show that Theorem E holds, simply take
$\delta=1$ and apply Theorem~\ref{thmHit}. Furthermore, as a by-product of the proof of
Theorem~\ref{thmHit}, we will also obtain the explicit formula for the integral that appeared
in \eqref{eqHermite}.

\begin{corollary}
Let $\mu, a\in\C$ with $\re \mu>0$, and let $P_n(x)$ be the $n$ degree polynomial defined
in \eqref{eqDeHe}. Then
$$\inr P_n(x)e^{-\mu \left(x-az\right)^2}dx=C\sum_{k=0}^{[\frac{n}{2}]}\frac{(-1)^k2^nn!\,a^{n-2k}
\sqrt{\pi}}{4^kk!\,(n-2k)!\delta^{k}\sqrt{\mu}}\left(1-\frac{\delta}{\mu}\right)^k\, z^{n-2k}$$
for all $z\in\C$.
\end{corollary}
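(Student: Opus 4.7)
The plan is to compute the integral as a direct application of Lemma~\ref{lemcal} to each monomial in the explicit expansion \eqref{eqDeHe} of $P_n(x)$, and then to collapse the resulting double sum with a single application of the binomial theorem. The argument is bookkeeping rather than conceptual, so I would organize it to make the combinatorial step as transparent as possible.

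First, I would recast Lemma~\ref{lemcal} in a more convenient form. Replacing $z$ by $-az$ in its statement, noting that the odd-index coefficients vanish, and using the identity $\Gamma(l+\tfrac12)=(2l)!\sqrt{\pi}/(4^l\,l!)$ to eliminate the Gamma function, one obtains
$$\inr x^m e^{-\mu(x-az)^2}\,dx=\sum_{l=0}^{[m/2]}\frac{m!\,\sqrt{\pi}\,a^{m-2l}}{4^l\,l!\,(m-2l)!\,\mu^l\,\sqrt{\mu}}\,z^{m-2l},$$
where the two sign factors $(-1)^m$ from the lemma and $(-1)^{m-2l}=(-1)^m$ from $(-az)^{m-2l}$ combine to $+1$. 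I would then substitute the closed form \eqref{eqDeHe} into the integral on the left-hand side of the corollary, interchange the (finite) sum with the integral, and apply the above displayed identity with $m=n-2k$. The factor $(n-2k)!$ cancels, and after reindexing by $j=k+l$ (so that $k$ ranges over $\{0,\dots,j\}$ for each $j\in\{0,\dots,[n/2]\}$) the result takes the form
$$C\,\sqrt{\pi}\sum_{j=0}^{[n/2]}\frac{2^n n!\,a^{n-2j}}{4^j\,(n-2j)!\,\sqrt{\mu}}\,z^{n-2j}\sum_{k=0}^{j}\frac{(-1)^k}{k!\,(j-k)!\,\delta^k\,\mu^{j-k}}.$$

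Finally, I would evaluate the inner sum by the binomial theorem,
$$\sum_{k=0}^{j}\frac{(-1)^k}{k!\,(j-k)!\,\delta^k\,\mu^{j-k}}=\frac{1}{j!\,\mu^j}\left(1-\frac{\mu}{\delta}\right)^j=\frac{(-1)^j}{j!\,\delta^j}\left(1-\frac{\delta}{\mu}\right)^j,$$
the last equality following from $(1-\mu/\delta)^j=(\delta-\mu)^j/\delta^j=(-1)^j\mu^j(1-\delta/\mu)^j/\delta^j$. Substituting this back and relabeling $j$ as $k$ produces exactly the expression stated in the corollary. There is no substantive obstacle; the only point requiring a little care is the sign bookkeeping in the conversion between $(1-\mu/\delta)^j$ and $(1-\delta/\mu)^j$, which supplies the factor $(-1)^k$ appearing in the final answer.
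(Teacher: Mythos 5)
Your proof is correct, and it takes a genuinely different route from the paper's. The paper computes the Taylor coefficients $Q_n^{(n-2k)}(0)$ of the integral by combining the differentiation identity \eqref{eqDk} with an intermediate identity extracted from the induction in the proof of Theorem~\ref{thmHit} (the telescoping evaluation of the sum $\sum_{j}\frac{(n-2j)!\,c_{2j}}{(2k-2j+2)!}\cdot\frac{(2k-2j+1)!!}{2^{k-j+1}\mu^{k-j+1}}$ as a multiple of $(1-\delta/\mu)^{k+1}$), so its argument is short but leans on that earlier computation. You instead apply Lemma~\ref{lemcal} termwise to the explicit expansion \eqref{eqDeHe}, reindex the resulting double sum along $j=k+l$, and collapse the inner sum by the binomial theorem; the binomial identity you establish directly is essentially the same one the paper imports from Theorem~\ref{thmHit}, so your version is self-contained and arguably more transparent. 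The details all check: the rewriting of Lemma~\ref{lemcal} using $\Gamma(l+\tfrac12)=(2l)!\sqrt{\pi}/(4^l\,l!)$ and the substitution $z\mapsto -az$ (legitimate because the lemma is an identity of entire functions of $z$), the sign cancellation $(-1)^m(-1)^{m-2l}=1$, the range bookkeeping in the reindexing (for $j\le[n/2]$ and $0\le k\le j$ one indeed has $l=j-k\le[(n-2k)/2]$), and the conversion $(1-\mu/\delta)^j/\mu^j=(-1)^j(1-\delta/\mu)^j/\delta^j$ that produces the factor $(-1)^k$ in the stated formula.
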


\begin{proof}
Write $$Q_n(z)=\inr P_n(x)e^{-\mu \left(x-az\right)^2}dx.$$
Since $c_{2k+1}=0$ whenever $2k+1\leq n$, it follows from Lemma~\ref{lemcal} that
$$Q_n(z)=\sum_{k=0}^{[\frac{n}{2}]}\frac{Q_n^{(n-2k)}(0)}{(n-2k)!}\,z^{n-2k}.$$
Furthermore, by the proof of Theorem~\ref{thmHit}, we have
\begin{align*}
    Q_n^{(n-2k)}(0)&=\frac{d^{n-2k}}{dz^{n-2k}}\left(\inr P_n(x)e^{-\mu \left(x-az\right)^2}dx\right)\bigg|_{z=0}\\
    &=a^{n-2k}\inr P_n^{(n-2k)}(x)e^{-\mu \left(x-az\right)^2}dx \bigg|_{z=0}\\
    &=-a^{n-2k}\frac{(n-2k+2)!\,c_{2k-2}\sqrt{\pi}}{4k\delta\sqrt{\mu}}\left(1-\frac{\delta}{\mu}\right)^{k} \\
    &=C a^{n-2k}\frac{(-1)^{k}2^nn!}{4^{k}k!\,\delta^{k}}\frac{\sqrt{\pi}}{\sqrt{\mu}}\left(1-\frac{\delta}{\mu}\right)^{k}.
\end{align*}
whenever $k\in\mathbb{N}$ with $2k\leq n$. This proves the desired result.
\end{proof}

We return to the calculation of eigenvalues and eigenvectors of the operators $T^{(s,t)}$.
Recall from the analysis at the beginning of this section that if $|\re s|<1$ and $|s|^2=1+|t|^2$ then
$T^{(s,t)}$ has an eigenvalue $\lambda_0$ with corresponding eigenvector $e^{\gamma z^2/2}$, where
$\lambda_0=\frac1{\sqrt s}\,\sqrt{\frac s{1+\gamma\overline t}}$ for a certain choice of $\gamma$ in the
unit disk that is guaranteed by Lemma~\ref{existenceofgamma}. With the help of Theorem~\ref{thmHit},
we are going to obtain additional eigenvalues and corresponding eigenvectors for $T^{(s,t)}$ in the
folowing result.

\begin{theorem}\label{thmev}
Suppose $(s,t)\in\C^2$ with $|s|^2=|t|^2+1$ and $|\re s|<1$. Let $\gamma$ be the unique number
in $\D$ from Lemma~\ref{existenceofgamma}. Then for each nonnegative integer $n$ the complex number
$$\lambda_n=\frac{1}{\sqrt{s}}\,\sqrt{\frac{s}{s+\overline{t}\gamma}}\,\frac{1}{(s+\overline{t}\gamma)^n}\in\T$$
is an eigenvalue of $T^{(s,t)}$ and the function $Q_n(z) e^{\gamma z^2/2}$
is a corresponding eigenvector, where
$$Q_n(z)=\inr  H_n\left(\frac{x}{\rho}\right)\exp\left[-\frac{2}{1+\gamma} \left(x-\frac{1+\gamma}{2}z\right)^2
\right]\,dx$$
is a polynomial of degree $n$. Here $H_n(x)$ is the $n$th Hermite polynomial and $\rho$ is a positive
number such that
$$\rho^{2}=\frac{(1+\gamma)\left[(s-\overline{t})(s+\overline{t}\gamma)-1\right]}{2\left[(s+\overline{t}\gamma)^2-1\right]}.$$
\end{theorem}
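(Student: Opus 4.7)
The plan is to verify directly that $T^{(s,t)}f_n=\lambda_n f_n$, reducing the calculation to Theorem E via a Fubini swap. First I observe that $f_n=Q_n(z)e^{\gamma z^2/2}$ is a nonzero element of $F^2$: expanding $H_n$ and applying Lemma~\ref{lemcal} (which is legitimate because $\re(2/(1+\gamma))>0$ when $|\gamma|<1$) shows that $Q_n$ is a polynomial in $z$ of degree exactly $n$, and $e^{\gamma z^2/2}\in F^2$ since $|\gamma|<1$.

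Substituting the definition of $Q_n(w)$ into $T^{(s,t)}f_n(z)$ and applying Fubini produces a double integral over $\C\times\R$, whose inner Gaussian integral in $w$ is of the form \eqref{eqIC}; the convergence hypothesis there reduces to $|s+\overline{t}|<2|s|$, which is automatic from $|t|<|s|$. After evaluating the $w$-integral, collecting terms, and completing the square in $x$, we obtain
\[
T^{(s,t)}f_n(z)=\frac{1}{\sqrt s}\sqrt{\frac{s}{s-\overline{t}}}\,e^{E(z)}\int_\R H_n(x/\rho)\,e^{-\mu'(x-b'z)^2}\,dx,
\]
where $\mu'=\frac{2(s+\overline{t}\gamma)}{(1+\gamma)(s-\overline{t})}$, $b'=\frac{1+\gamma}{2(s+\overline{t}\gamma)}$, and $E(z)$ is a quadratic in $z$ with coefficient $\frac{t}{2s}-\frac{1}{2s(s-\overline{t})}+\frac{1+\gamma}{2(s-\overline{t})(s+\overline{t}\gamma)}$.

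Changing variables $y=x/\rho$ rewrites the remaining integral as the left-hand side of the integral equation \eqref{eqHermite} applied to $H_n(y)$, with parameters $(\mu,a)=(\mu'\rho^2,b'/\rho)$ and $(\nu,b)=(2\rho^2/(1+\gamma),(1+\gamma)/(2\rho))$. A direct computation shows that the compatibility condition $(b^2-a^2)\mu\nu/(\nu b^2-\mu a^2)=1$ from \eqref{eqHermieeqN} is equivalent to the formula for $\rho^2$ stated in the theorem; assuming this, Theorem E identifies the integral as $C_n\,Q_n(z)$, with $C_n=\sqrt{(s-\overline{t})/(s+\overline{t}\gamma)}\cdot(s+\overline{t}\gamma)^{-n}$. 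The prefactors then collapse: $\frac{1}{\sqrt s}\sqrt{\frac{s}{s-\overline{t}}}\cdot C_n=\lambda_n$, and $|\lambda_n|=1$ follows from $|s+\overline{t}\gamma|=1$, which was observed during the proof of Lemma~\ref{existenceofgamma}.

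The main obstacle is the algebraic identity $E(z)=\gamma z^2/2$, equivalently
\[
\frac{t}{2s}-\frac{1}{2s(s-\overline{t})}+\frac{1+\gamma}{2(s-\overline{t})(s+\overline{t}\gamma)}=\frac{\gamma}{2}.
\]
Setting $u=s+\overline{t}\gamma$, clearing denominators, and using the identity $u^2-(s+\overline{s})u+1=0$ (which follows by substituting $\overline{t}\gamma=u-s$ into \eqref{choiceofgamma} and using $|s|^2-|t|^2=1$), one computes $(s-\overline{t})(t-\gamma s)=-(s-\overline{t})\gamma/u$ and hence $u[(s-\overline{t})(t-\gamma s)-1]=-s(1+\gamma)$, which yields the identity. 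One must also check that the $\rho^2$ forced by the compatibility condition is a positive real number; this uses the explicit form $u=\re s+i\,\mathrm{sgn}(\im s)\sqrt{1-(\re s)^2}$ from the proof of Lemma~\ref{existenceofgamma}, which makes $u^2-1=2iu\,\mathrm{sgn}(\im s)\sqrt{1-(\re s)^2}$ and reduces the verification to a direct computation.
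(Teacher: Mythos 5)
Your proposal follows essentially the same route as the paper: substitute the definition of $Q_n$, swap the order of integration, evaluate the inner Gaussian integral via \eqref{eqIC}, match the resulting exponential prefactor to $e^{\gamma z^2/2}$ using the quadratic relation for $\gamma$, and reduce the remaining identity to the Hermite integral equation of Theorem~\ref{thmHit} with the parameters $\mu,\nu,a,b$ you list. Your algebra checks out: the coefficient identity for $E(z)$ is correct (it is equivalent to $\frac{t+\overline{s}\gamma}{2(s+\overline{t}\gamma)}=\frac{\gamma}{2}$, i.e.\ to \eqref{choiceofgamma2}), your relation $u^2-(s+\overline{s})u+1=0$ for $u=s+\overline{t}\gamma$ is valid, and the compatibility condition does force exactly the stated $\rho^2$.

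Two hypotheses of Theorem E are glossed over, and one of them is a genuine issue. First, Theorem E assumes $a^k\neq b^k$ for $1\le k\le n$; in your setup $a/b=1/(s+\overline{t}\gamma)$ is unimodular, so this fails precisely when $(s+\overline{t}\gamma)^k=1$ for some $k\le n$ --- a case not excluded by the hypotheses of Theorem~\ref{thmev} (and discussed explicitly in the remark following it). Invoking ``Theorem E'' as a black box is therefore not legitimate in general; you need the weaker fact that the implication (a)$\Rightarrow$(b) in the proof of Theorem~\ref{thmHit} (``$H_n$ satisfies the integral equation with $C_n=\frac{\sqrt{\nu}}{\sqrt{\mu}}(a/b)^n$'') never uses the hypothesis $a^k\neq b^k$; this is why the paper cites ``the proof of Theorem~\ref{thmHit}'' rather than the theorem itself. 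Second, Theorem E also requires $\re\mu>0$, which for your $\mu=\frac{2(s+\overline{t}\gamma)\rho^2}{(1+\gamma)(s-\overline{t})}$ is not obvious and is verified in the paper by a separate displayed computation; you should include that check (and, relatedly, the sign bookkeeping that lets you replace $\frac{\sqrt{\nu}}{\sqrt{\mu}}$ by $\sqrt{\nu/\mu}$ and collapse $\sqrt{\frac{s}{s-\overline{t}}}\sqrt{\frac{s-\overline{t}}{s+\overline{t}\gamma}}$ into $\sqrt{\frac{s}{s+\overline{t}\gamma}}$, which the paper justifies via $\re\frac{s}{s-\overline{t}}>0$ and $\re\frac{s}{s+\overline{t}\gamma}>0$). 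With these points supplied, your argument coincides with the paper's.
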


\begin{proof}
Since $|\gamma|<1$, we use Fubini's theorem and \eqref{eqIC} to obtain
\begin{align*}
    &T^{(s,t)}[Q_n(z) e^{\frac{\gamma}{2}z^2}](z)\\
    &=\frac{1}{\sqrt{s}}\inc Q_n(w)\exp\left[\frac{\gamma}{2}w^2+\frac{t}{2s}z^2-\frac{\overline{t}}{2s}\overline{w}^2+\frac{ z\overline{w}}{s}\right]\,d\lambda(w)\\
    &=\frac{e^{\frac{t}{2s}z^2}}{\sqrt s}
    \inr H_n\left(\frac{x}{\rho}\right)e^{-\frac{2}{1+\gamma} x^2}\,dx
    \inc\exp\left[-\frac{w^2 }{2}+2xw-\frac{\overline{t}}{2s}\overline{w}^2+\frac{ z\overline{w}}{s}
    \right]\,d\lambda(w)\\
    &=Ce^{\frac{t-\overline{s}}{2s(s-\overline{t})}z^2}
    \inr H_n\left(\frac{x}{\rho}\right)\exp\left[-\left(\frac{2}{1+\gamma}+\frac{2\overline{t}}{s-\overline{t}}\right) x^2
    +\frac{2z}{s-\overline{t}} x\right]\,dx\\
    &=Ce^{\frac{t+\overline{s}\gamma}{2(s+\overline{t}\gamma)}z^2}\inr  H_n\left(\frac{x}{\rho}\right)
    \exp\left[-\frac{2(s+\overline{t}\gamma)}{(1+\gamma)(s-\overline{t})} \left(x-\frac{1+\gamma}{2(s+\overline{t}\gamma)}z\right)^2\right]\,dx,
\end{align*}
where
$$C=\frac{1}{\sqrt{s}}\sqrt{\frac{s}{s-\overline{t}}}.$$
It follows from $|s|^2=1+|t|^2$ that the equation in (\ref{choiceofgamma}) can be rewritten as
\begin{equation}
\gamma(s+\overline{t}\gamma)=t+\overline{s}\gamma,
\label{choiceofgamma2}
\end{equation}
which implies
$$e^{\frac{t+\overline{s}\gamma}{2(s+\overline{t}\gamma)}z^2}=e^{\frac{\gamma}{2}z^2}.$$
So it suffices for us to prove that
\begin{multline}\label{eqintH}
    \frac{1}{\sqrt{s}}\sqrt{\frac{s}{s-\overline{t}}} \inr H_n\left(\frac{x}{\rho}\right)
    \exp\left[-\frac{2(s+\overline{t}\gamma)}{(1+\gamma)(s-\overline{t})} \left(x-\frac{1+\gamma}{2(s+\overline{t}\gamma)}z\right)^2\right]\,dx\\
    =\lambda_n \inr  H_n\left(\frac{x}{\rho}\right)\exp\left[-\frac{2}{1+\gamma}
    (x-\frac{1+\gamma}{2}z)^2\right]\,dx.
\end{multline}
It follows from $|\re s|<1$ and $|s|^2=1+|t|^2$ that $|\im t|<|\im s|$. This together with
\eqref{choiceofgamma2} and \eqref{eqgamma} implies that
\begin{align*}
\rho^{2}&=\frac{(s-\overline{t})(s+\overline{t}\gamma)+(s-\overline{t})(t+\overline{s}\gamma)-1-\gamma}
{2[s(s+\overline{t}\gamma)+\overline{t}(t+\overline{s}\gamma)-1]}\\
&=\frac{(s+\overline{t}\gamma)(s+t-\overline{s}-\overline{t})}
{2[(s+\overline{s})(s+\overline{t}\gamma)-2]}\\
&=\frac{(s+t-\overline{s}-\overline{t})}{2\left(s+\overline{s}-\frac{2}{s+\overline{t}\gamma}\right)}>0.
\end{align*}
It is easy to check that
$$\re \left(\frac{s}{s-\overline{t}}\right)>0 \qquad {\rm and} \qquad \re \left(\frac{s}{s+\overline{t}\gamma}\right)>0.$$
Thus \eqref{eqintH} is equivalent to
\begin{multline}\label{eqintH2}
     \inr H_n(x)\exp\left[-\frac{2(s+\overline{t}\gamma)\rho^2}{(1+\gamma)(s-\overline{t})}
     \left(x-\frac{1+\gamma}{2(s+\overline{t}\gamma)\rho}z\right)^2\right]\,dx\\
    =\sqrt{\frac{s-\overline{t}}{s+\overline{t}\gamma}}\frac{1}{(s+\overline{t}\gamma)^n}
    \inr H_n(x)\exp\left[-\frac{2\rho^2}{1+\gamma} (x-\frac{1+\gamma}{2\rho}z)^2\right]\,dx.
\end{multline}

Write
$$\mu=\frac{2(s+\overline{t}\gamma)\rho^2}{(1+\gamma)(s-\overline{t})},\quad
\nu=\frac{2\rho^2}{1+\gamma},\quad a=\frac{1+\gamma}{2(s+\overline{t}\gamma)\rho},
\quad b=\frac{1+\gamma}{2\rho}.$$
Then a straightforward calculation shows that $\re \nu>0$ and
\begin{align*}
\re \mu   &=\frac{2\rho^2(|s|^2-\re(st)+\re(\gamma)+\re(st)|\gamma|^2-|t|^2|\gamma|^2)}
    {|1+\gamma|^2|s-\overline{t}|^2}\\
&=\frac{\rho^2\left[(1+|s-\overline{t}|^2)+2\re(\gamma)+(1-|s-\overline{t}|^2)|\gamma|^2\right]}
{|1+\gamma|^2|s-\overline{t}|^2}\\
&> \frac{\rho^2}{|s-\overline{t}|^2}>0.
\end{align*}
Recall from the proof of Lemma~\ref{existenceofgamma} that $(s+\overline{t}\gamma)^2\neq1$.
Since $$\frac{(1+\gamma)(s-\overline{t})(s+\overline{t}\gamma)}{2\rho^{2}}-(s+\overline{t}\gamma)^2=\frac{1+\gamma}{2\rho^{2}}-1,$$
it follows that
$$\nu b^2\neq\mu a^2\qquad {\rm and} \qquad \frac{(b^2-a^2)\mu\nu}{\nu b^2-\mu a^2}=1.$$
Then by the proof of Theorem~\ref{thmHit}, the $n$th Hermite polynomial $H_n(x)$
satisfies \eqref{eqintH2}. This complete the proof of the theorem.
\end{proof}


Some special cases are worth mentioning here. If $(s+\overline{t}\gamma)^k=1$ for some $2<k\leq n$, then by the proof of Theorem~\ref{thmHit}, each Hermite polynomial $H_{n+kN}(x)$, $N=0,1,\cdots,$ satisfies \eqref{eqintH2}, which implies that all functions $Q_{n+kN}(z)e^{\gamma z^2/2}$ are eigenvectors for the
operator $T^{(s,t)}$ corresponding to the eigenvalue $\lambda_n$.
Conversely, if $(s+\overline{t}\gamma)^k\neq1$ whenever $1\leq k\leq n$, then 
$H_n(x)$ is the unique polynomial of degree less than or equal to $n$ satisfying \eqref{eqintH2}. Furthermore, if $(s+\overline{t}\gamma)^k\neq1$ for any positive number $k$, then we can completely determine the spectrum of the operator $T^{(s, t)}$. More specifically, we have the following.

\begin{corollary}
Suppose $(s,t)\in\C^2$ with $|s|^2=1+|t|^2$ and $|\re s|<1$. Let $\gamma$ be the number from
Lemma~\ref{existenceofgamma} and $1/(s+\overline t\gamma)=e^{i\theta}$, $\theta\in (-\pi, \pi]$.
If $\theta$ is not a rational multiple of $\pi$, then the spectrum of the unitary operator $T^{(s,t)}$ is
the full unit circle.
\end{corollary}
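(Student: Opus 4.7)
The plan is to combine the fact that each $\lambda_n$ from Theorem~\ref{thmev} is a genuine eigenvalue of $T^{(s,t)}$ with a classical density argument on the unit circle. Since $T^{(s,t)}$ is unitary by Theorem~\ref{thmTB}(c), its spectrum is a closed subset of $\T$, so it suffices to exhibit a dense subset of $\T$ lying in the spectrum. The eigenvalues provided by Theorem~\ref{thmev} are the natural candidates.

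First I would rewrite the family of eigenvalues in a more transparent form. By the statement of Theorem~\ref{thmev}, for every $n\ge 0$,
$$\lambda_n=\lambda_0\cdot\frac{1}{(s+\overline t\gamma)^n}=\lambda_0\,e^{in\theta},$$
where $\lambda_0=\frac{1}{\sqrt s}\sqrt{\frac{s}{s+\overline t\gamma}}$. Because $|s+\overline t\gamma|=1$ (this was verified in the proof of Lemma~\ref{existenceofgamma}) and $|s|^2=1+|t|^2$, one checks that $|\lambda_0|=1$, and that $|\lambda_n|=1$ for every $n$. Thus the eigenvalues lie on the unit circle and are obtained from $\lambda_0$ by successive rotations through the fixed angle $\theta$.

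Next I would invoke Kronecker's equidistribution theorem (or, equivalently, Weyl's theorem): if $\theta$ is not a rational multiple of $\pi$, then the sequence $\{e^{in\theta}\}_{n\ge 0}$ is dense in $\T$. Multiplying by the unimodular constant $\lambda_0$ (a rotation of $\T$) preserves density, so the set $\{\lambda_n\}_{n\ge 0}$ is dense in $\T$.

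Finally, each $\lambda_n$ belongs to the spectrum $\sigma(T^{(s,t)})$ because Theorem~\ref{thmev} exhibits a nonzero eigenvector $Q_n(z)e^{\gamma z^2/2}\in F^2$ with $T^{(s,t)}[Q_n(z)e^{\gamma z^2/2}]=\lambda_n\,Q_n(z)e^{\gamma z^2/2}$. (One should briefly confirm that $Q_n(z)e^{\gamma z^2/2}$ indeed lies in $F^2$ and is nonzero, but this follows because $\gamma\in\D$ makes $e^{\gamma z^2/2}\in F^2$ and $Q_n$ is a polynomial of degree $n$.) Since $\sigma(T^{(s,t)})$ is closed in $\C$, it contains the closure of $\{\lambda_n\}$, which is all of $\T$. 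Combining with the inclusion $\sigma(T^{(s,t)})\subseteq\T$ coming from unitarity yields the equality $\sigma(T^{(s,t)})=\T$.

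The argument is short and the only serious input beyond results already in the paper is the Kronecker/Weyl density statement, which is entirely standard; I do not anticipate a real obstacle, only the minor bookkeeping of confirming $|\lambda_0|=|\lambda_n|=1$ and that the eigenvectors furnished by Theorem~\ref{thmev} are legitimate nonzero elements of $F^2$.
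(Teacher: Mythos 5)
Your proposal is correct and follows essentially the same route as the paper: note that $\lambda_n=\lambda_0 e^{in\theta}$ are eigenvalues by Theorem~\ref{thmev}, use the density of $\{e^{in\theta}\}$ in $\T$ when $\theta$ is an irrational multiple of $\pi$, and conclude via the closedness of the spectrum of a unitary operator. The extra bookkeeping you include (checking $|\lambda_0|=1$ and that the eigenvectors lie in $F^2$) is fine but already implicit in the cited theorem.
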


\begin{proof}
If $\theta$ is not a rational multiple of $\pi$, it is well known that the sequence
$\{e^{in\theta}: n=0,1,2,\cdots\}$ is dense in the unit circle $\T$. Since the spectrum $\sigma(T^{(s,t)})$
of the unitary operator $T^{(s,t)}$ is a closed subset of $\T$, it follows from Theorem~\ref{thmev} that
$\sigma(T^{(s,t)})=\T$.
\end{proof}

\section{Linear canonical transforms}

Recall that the general linear group of order $n$, denoted by $GL(n,\mathbb{R})$, is the group of all invertible
$n\times n$ matrices with real entries. The special linear group of order $n$, denoted by $SL(n, \mathbb{R})$,
is the subgroup of $GL(n, \R)$ consisting of matrices $A$ with $\det(A)=1$.

We will be mostly interested in the case when $n=2$. Instead of using four real entries to describe an
element in $GL(2, \R)$, it will be more convenient for us to think of a matrix in $GL(2,\R)$ as a pair
of complex numbers.

\begin{lemma}
Let $GL(\C\times\C)=\left\{(s, t)\in\C\times\C: |s|\neq|t|\right\}$.
Then $GL(\C\times C)$ is a group with the following operation:
   $$(s_1, t_1)\cdot(s_2, t_2)=(s_1s_2+\overline t_1t_2, t_1s_2+\overline s_1t_2).$$
Furthermore, $SL(\C\times\C)=\left\{(s, t)\in\C\times\C: |s|^2-|t|^2=1\right\}$
is a subgroup of $GL(\C\times\C)$.
\end{lemma}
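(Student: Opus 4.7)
The proof is essentially a direct verification of the group axioms, and the main piece of machinery is already in hand from the identity
$$|s|^2-|t|^2=(|s_1|^2-|t_1|^2)(|s_2|^2-|t_2|^2)$$
for $(s,t)=(s_1,t_1)\cdot(s_2,t_2)$, which was noted in the proof of Lemma~\ref{lem2}. This formula is what drives both closure statements: since the product of two nonzero real numbers is nonzero, if $|s_k|\neq|t_k|$ for $k=1,2$ then $|s|\neq|t|$, so $GL(\C\times\C)$ is closed under the operation; and since $1\cdot 1=1$, if $|s_k|^2-|t_k|^2=1$ for $k=1,2$ then $|s|^2-|t|^2=1$, giving closure for $SL(\C\times\C)$. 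I would begin the proof by recording this identity (with a one-line algebraic check, independent of any analytic content) and deducing the two closure statements.

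Next I would identify $(1,0)$ as the identity: plugging into the formula,
$(1,0)\cdot(s,t)=(s,t)=(s,t)\cdot(1,0)$, and clearly $(1,0)\in SL(\C\times\C)\subset GL(\C\times\C)$. For inverses, given $(s,t)\in GL(\C\times\C)$ I would \emph{derive} the candidate by solving $(s,t)\cdot(s',t')=(1,0)$, i.e., $ss'+\overline t\,t'=1$ and $ts'+\overline s\,t'=0$. Since $|s|\neq|t|$ this linear system has the unique solution
$$s'=\frac{\overline s}{|s|^2-|t|^2},\qquad t'=\frac{-t}{|s|^2-|t|^2},$$
and one checks that $(s',t')\cdot(s,t)=(1,0)$ as well. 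The closure identity already guarantees $(s',t')\in GL(\C\times\C)$, since $|s'|^2-|t'|^2=1/(|s|^2-|t|^2)\neq 0$. Specializing to $(s,t)\in SL(\C\times\C)$ yields the simpler formula $(s,t)^{-1}=(\overline s,-t)$, which automatically lies in $SL(\C\times\C)$; this matches the unitary inverse $cT^{(\overline s,-t)}$ appearing in Theorem~\ref{thmTB}(c) and is a pleasant consistency check.

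The only remaining step is associativity, which is a straightforward but somewhat tedious direct computation. I would expand both $[(s_1,t_1)\cdot(s_2,t_2)]\cdot(s_3,t_3)$ and $(s_1,t_1)\cdot[(s_2,t_2)\cdot(s_3,t_3)]$ in each coordinate, obtaining for the first coordinate in each case
$$s_1s_2s_3+\overline{t_1}t_2s_3+\overline{t_1}\,\overline{s_2}t_3+s_1\overline{t_2}t_3,$$
and for the second coordinate in each case
$$t_1s_2s_3+\overline{s_1}t_2s_3+\overline{s_1}\,\overline{s_2}t_3+t_1\overline{t_2}t_3.$$
The key observation that makes the two expansions agree is that in the group law the first argument of each pair appears without a bar and the second with a bar exactly when it is paired with the \emph{first} coordinate of the next pair, so the conjugations distribute compatibly in both orderings. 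This bookkeeping is the main obstacle, but it is purely mechanical; nothing analytic beyond the closure identity is required. With closure, identity, inverse, and associativity in hand, $GL(\C\times\C)$ is a group and $SL(\C\times\C)$ is a subgroup.
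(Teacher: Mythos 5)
Your proof is correct, and it supplies precisely the ``routine details'' that the paper omits: the closure identity $|s|^2-|t|^2=(|s_1|^2-|t_1|^2)(|s_2|^2-|t_2|^2)$ is the same one recorded in the proof of Lemma~\ref{lem2}, your inverse formula agrees with the one the paper states immediately after the lemma, and your expansions for associativity check out in both coordinates. The one point of difference worth noting is that the paper also offers a second route, via Theorem~\ref{thmG}: the map
$$\varphi\begin{bmatrix} a & b\\ c & d\end{bmatrix}=\left(\frac{a+ib+d-ic}2,\,\frac{a+ib-d+ic}2\right)$$
is a bijection from $GL(2,\R)$ onto $GL(\C\times\C)$ carrying matrix multiplication to the stated operation, so associativity, the identity, and inverses are all inherited from $GL(2,\R)$ for free, and the determinant identity \eqref{eqdet} handles the $SL$ statement. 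That approach buys you associativity without the bookkeeping you describe (which, as you note, is the only genuinely tedious step), at the cost of first verifying that $\varphi$ is a homomorphism --- a computation of comparable length. Your direct verification has the advantage of being self-contained and of producing the explicit inverse $(s,t)^{-1}=(\overline s,-t)$ on $SL(\C\times\C)$, whose consistency with $[T^{(s,t)}]^{-1}=cT^{(\overline s,-t)}$ in Theorem~\ref{thmTB}(c) you rightly flag. Either route is acceptable; nothing is missing from yours.
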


\begin{proof}
The proof follows easily from the definitions. We omit the routine details. This will also follow from
Theorem~\ref{thmG} and its proof below.
\end{proof}

It is clear that the group $GL(\C\times\C)$ has unit $(1,0)$ and it is easy to check that the inverse of
$(s,t)$ is given by
$$(s,t)^{-1}=\left(\frac{\overline{s}}{|s|^2-|t|^2},-\frac{t}{|s|^2-|t|^2}\right).$$

\begin{theorem}\label{thmG}
Let
$$\varphi(A)=\left(\frac{a+bi+d-ci}{2}, \frac{a+bi-d+ci}{2}\right)$$
for $A=\left[ {\begin{array}{*{20}c}
   a & b  \\
   c & d  \\
\end{array}} \right]\in GL(2, \mathbb{R})$. Then $\varphi: GL(2,\R)\to GL(\C\times\C)$
is a group isomorphism, and it maps $SL(2,\R)$ onto $SL(\C\times\C)$.
\end{theorem}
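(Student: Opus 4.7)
The plan is to verify four things: that $\varphi$ sends $GL(2,\R)$ into $GL(\C\times\C)$, that it is a group homomorphism, that it is a bijection, and that it restricts to a bijection $SL(2,\R)\to SL(\C\times\C)$. All four will follow from one key identity together with a direct computation of the product law.

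First I would compute $s\pm t$ from the definition: a direct check gives $s+t=a+bi$ and $s-t=d-ci$, equivalently $\bar s+\bar t=a-bi$ and $\bar s-\bar t=d+ci$. From these four relations the inverse map is immediate: given $(s,t)\in\C^2$, put $a=\re(s+t)$, $b=\im(s+t)$, $c=-\im(s-t)$, $d=\re(s-t)$. Thus $\varphi$ is a bijection of $\R^4$ onto $\C^2$, and it remains only to track the distinguished subsets and the product.

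Next I would prove the determinant identity
$$|s|^2-|t|^2=\det A=ad-bc.$$
A clean way is to use the two expansions
$(s+t)\overline{(s-t)}+(s-t)\overline{(s+t)}=2(|s|^2-|t|^2)$
and to plug in $s+t=a+bi$, $s-t=d-ci$, obtaining $2(ad-bc)$ on the left. (Alternatively, one can observe that with $U=\tfrac1{\sqrt 2}\begin{bmatrix}1&i\\1&-i\end{bmatrix}$ one has $UAU^{*}=\begin{bmatrix}\bar s&t\\\bar t&s\end{bmatrix}$, whose determinant is $|s|^2-|t|^2$.) This identity immediately shows $\varphi(GL(2,\R))\subset GL(\C\times\C)$ and $\varphi(SL(2,\R))\subset SL(\C\times\C)$, with the reverse inclusions supplied by the explicit inverse above. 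Together with the bijection on $\R^4\leftrightarrow\C^2$ this proves the bijective statements.

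The remaining and most laborious step is the homomorphism property $\varphi(A_1A_2)=\varphi(A_1)\cdot\varphi(A_2)$. I will argue this conceptually via the conjugation trick: the unitary $U$ above satisfies $UAU^{*}=\begin{bmatrix}\bar s&t\\\bar t&s\end{bmatrix}=:M(s,t)$, so matrix multiplication in $M_2(\R)$ is transported to multiplication of the matrices $M(s,t)$; a direct multiplication
$$M(s_1,t_1)\,M(s_2,t_2)=\begin{bmatrix}\bar s_1\bar s_2+t_1\bar t_2&\bar s_1t_2+t_1s_2\\\bar t_1\bar s_2+s_1\bar t_2&\bar t_1t_2+s_1s_2\end{bmatrix}$$
has $(2,2)$-entry $s_1s_2+\bar t_1t_2$ and $(2,1)$-entry $\overline{t_1s_2+\bar s_1t_2}$, so the product is again of the form $M(s,t)$ with exactly the $(s,t)$ prescribed by the group law of $GL(\C\times\C)$. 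Then $\varphi(A_1A_2)=(s,t)=\varphi(A_1)\cdot\varphi(A_2)$. (A reader who prefers a brute force calculation can verify this by expanding the four real entries of $A_1A_2$ and matching real and imaginary parts in the definition of $\varphi$; this is precisely the obstacle, in the sense that without the conjugation viewpoint one faces a moderately painful bookkeeping computation, but no step is subtle.) Combining the three steps completes the proof.
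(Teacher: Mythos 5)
Your proposal is correct, and for the one genuinely laborious step it takes a different and arguably cleaner route than the paper. The paper's proof handles the bijection exactly as you do (the identity $|s|^2-|t|^2=ad-bc$ plus the explicit inverse $a=\re(s+t)$, $b=\im(s+t)$, $c=-\im(s-t)$, $d=\re(s-t)$), but it establishes the homomorphism property by brute force: it multiplies out the four real entries of $A_1A_2$, applies $\varphi$, and matches the result against the product $\varphi(A_1)\cdot\varphi(A_2)$ term by term. Your conjugation trick replaces that bookkeeping with structure: one checks once that
$$UAU^{*}=\begin{bmatrix}\overline{s}&t\\\overline{t}&s\end{bmatrix}=:M(s,t),\qquad U=\tfrac1{\sqrt 2}\begin{bmatrix}1&i\\1&-i\end{bmatrix},$$
(which I verified entry by entry is exactly the paper's $(s,t)$), and then multiplicativity of $\varphi$ is just $U(A_1A_2)U^{*}=(UA_1U^{*})(UA_2U^{*})$ together with the observation that the set of matrices $M(s,t)$ is closed under multiplication with the stated product law. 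This buys you more than the paper's computation does: it explains where the otherwise unmotivated group law $(s_1,t_1)\cdot(s_2,t_2)=(s_1s_2+\overline{t}_1t_2,\,t_1s_2+\overline{s}_1t_2)$ comes from, it gives the determinant identity for free (since $\det M(s,t)=|s|^2-|t|^2$ and conjugation by the unitary $U$ preserves determinants), and it also yields the paper's preceding lemma (that $GL(\C\times\C)$ and $SL(\C\times\C)$ are groups) as a corollary rather than a separate routine check. The only caution is that your argument implicitly needs the image of $A\mapsto UAU^{*}$ on real matrices to be precisely the set of matrices of the form $M(s,t)$; your explicit inverse formula supplies exactly that, so there is no gap.
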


\begin{proof}
Since
\begin{equation}\label{eqdet}
    \left|\frac{a+bi+d-ci}{2}\right|^2-\left|\frac{a+bi-d+ci}{2}\right|^2=ad-bc,
\end{equation}
we conclude that $\varphi(A)\in GL(\mathbb{C}\times\mathbb{C})$ for any $A\in GL(2, \mathbb{R})$.

It is clear that $\varphi$ is injective. Moreover, for any $(s, t)\in GL(\C\times\C)$, if we let
$$A=\left[ {\begin{array}{*{20}c}
   \re\ (s+t) & \im\ (s+t)  \\
   -\im\ (s-t) & \re\ (s-t)  \\
\end{array}} \right],$$
then $A\in GL(2, \R)$ and $\varphi(A)=(s, t)$. Thus $\varphi$ is also surjective.

A direct calculation shows that
\begin{align*}
\varphi(A_1A_2)=&\left(\frac{a_1a_2+b_1c_2+(a_1b_2+b_1d_2)i+c_1b_2+d_1d_2-(c_1a_2+d_1c_2)i}{2}
\right.,\\
&\quad\left.\frac{a_1a_2+b_1c_2+(a_1b_2+b_1d_2)i-(c_1b_2+d_1d_2)+(c_1a_2+d_1c_2)i}{2}\right)\\
=&\left(\frac{a_1+b_1i+d_1-c_1i}{2}, \frac{a_1+b_1i-d_1+c_1i}{2}\right)\\
&\quad\cdot\left(\frac{a_2+b_2i+d_2-c_2i}{2}, \frac{a_2+b_2i-d_2+c_2i}{2}\right)\\
=&\varphi(A_1)\cdot \varphi(A_2),
\end{align*}
which shows that $\varphi$ preserves group operations in $GL(2,\R)$ and $GL(\C\times\C)$. Thus
$\varphi$ is an isomorphism from $GL(2,\R)$ onto $GL(\C\times\C)$. It follows from \eqref{eqdet} that
$\varphi$ maps $SL(2, \R)$ onto $SL(\C\times\C)$.
\end{proof}

The following result gives a new unitary projective representation of $SL(\C\times\C)$, and hence $SL(2, \R)$,
on the Fock space. Recall that a mapping $\phi: G\to B(H)$ from a group $G$ to the algebra $B(H)$ of all
bounded linear operators on a Hilbert space $H$ is called a unitary projective (or ray) representation if
\begin{itemize}
\item[(i)] for every $x\in G$ the operator $\phi(x)$ is unitary, and
\item[(ii)] for any $x_1,x_2\in G$ there exists a unimodular constant $\lambda$ such that
$\phi(x_1x_2)=\lambda\phi(x_1)\phi(x_2)$.
\end{itemize}
See \cite{B0, Mackey} for the theory of unitary projective or ray representations.

\begin{theorem}\label{thmT}
The mapping $(s, t)\mapsto T^{(s,t)}$ is a unitary projective representation of the group $SL(\C\times\C)$
on $F^2$.
\end{theorem}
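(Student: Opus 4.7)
The plan is to verify the two defining conditions of a unitary projective representation for the mapping $(s,t) \mapsto T^{(s,t)}$ on $SL(\C\times\C)$.

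First I would dispose of condition (i). Every $(s,t) \in SL(\C\times\C)$ satisfies $|s|^2 = |t|^2 + 1$, which forces $|t| < |s|$ and in particular $|t| < 2|s|$, so part (c) of Theorem~\ref{thmTB} immediately gives that $T^{(s,t)}$ is unitary on $F^2$.

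For condition (ii), I would fix $(s_j, t_j) \in SL(\C\times\C)$ for $j = 1,2$ and set $(s,t) = (s_1,t_1)\cdot(s_2,t_2)$. Because $SL(\C\times\C)$ is a group (already established in the preceding lemma and Theorem~\ref{thmG}), the product $(s,t)$ is again in $SL(\C\times\C)$, so $T^{(s,t)}$ is also unitary. It remains to produce a unimodular constant $c$ with $T^{(s_1,t_1)} T^{(s_2,t_2)} = c\, T^{(s,t)}$. The natural approach is to test on reproducing kernels: by Lemma~\ref{lem3}(a) we have $T^{(s_2,t_2)} K_u = K_u^{(s_2,t_2)}$, and hence
\[
T^{(s_1,t_1)} T^{(s_2,t_2)} K_u(z) = \int_\C K^{(s_1,t_1)}(z,w)\, K^{(s_2,t_2)}(w,u)\, d\lambda(w).
\]
The key observation, which is really the heart of the argument, is that the two correction exponents in Lemma~\ref{lem2} have coefficients proportional to $|t_j|^2 + 1 - |s_j|^2$, and these vanish precisely on $SL(\C\times\C)$. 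Consequently the right-hand side above collapses to $c\, K^{(s,t)}(z,u) = c\, T^{(s,t)} K_u(z)$, where $c = \pm 1$ is the unimodular scalar supplied by Lemma~\ref{lem2}.

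To finish, I would invoke the density of finite linear combinations of reproducing kernels in $F^2$ together with the boundedness (in fact unitarity) of the three operators involved to upgrade the kernel-wise identity to the operator identity $T^{(s_1,t_1)} T^{(s_2,t_2)} = c\, T^{(s,t)}$ on all of $F^2$. Since $|c| = 1$, this is exactly the unimodular cocycle condition required of a projective representation. I do not anticipate a genuine obstacle; the only point demanding care is the cancellation of the two exponential correction factors in Lemma~\ref{lem2}, which is precisely what encodes the passage from the general kernels to the special linear subgroup and makes the composition of integral operators return (up to sign) the operator indexed by the group product.
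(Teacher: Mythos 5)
Your proposal is correct and follows essentially the same route as the paper: both arguments rest on the kernel composition formula of Lemma~\ref{lem2}, the crucial observation that the two correction exponents there carry the factors $|t_j|^2+1-|s_j|^2$ and hence vanish exactly on $SL(\C\times\C)$, and the unitarity supplied by Theorem~\ref{thmTB}(c). The only cosmetic difference is that the paper composes the two integral operators directly on a general $f$ via a Fubini interchange, whereas you test the identity on reproducing kernels and then extend by density and boundedness --- both steps are valid and equally routine.
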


\begin{proof}
For $k=1,2$ let $(s_k, t_k)\in SL(\C\times\C)$, so $|s_k|^2=|t_k|^2+1$. It follows from Lemma~\ref{lem2} that
$$\inc K^{(s_1,t_1)}(z,w)K^{(s_2,t_2)}(w,u)\,d\lambda(w)=C_{(s,t)}K^{(s_1,t_1)\cdot(s_2,t_2)}(z,u),$$
where
$$C_{(s,t)}=\frac{\sqrt{s_1s_2+{\overline{t_1}t_2}}}{\sqrt{s_{1}}\sqrt{s_{2}}}
\sqrt{\frac{s_{1}s_{2}}{s_1s_2+{\overline{t_1}t_2}}}=\pm1.$$
Therefore,
\begin{eqnarray*}
    T^{(s_1, t_1)}T^{(s_2,t_2)} f(z)&=&\inc f(u)\,d\lambda(u)\inc K^{(s_1,t_1)}(z,w)
    K^{(s_2,t_2)}(w,u)\,d\lambda(w)\\
    &=&C_{(s,t)}\inc f(u)K^{(s_1,t_1)\cdot(s_2,t_2)}(z,u)\,d\lambda(u)\\
    &=&C_{(s,t)}T^{(s_1,t_1)\cdot(s_2,t_2)}f(z).
\end{eqnarray*}
By Theorem~\ref{thmTB}, each $T^{(s,t)}$ is a unitary operator on $F^2$, so
$(s,t)\mapsto T^{(s,t)}$ is a unitary projective representation of $SL(\C\times\C)$ on $F^2$.
\end{proof}

Next we will consider the Hilbert space $L^2(\R)=L^2(\R,dx)$ and a family of unitary operators on it, namely,
the so-called linear canonical transforms that were traditionally studied in Hamiltonial mechanics. These
transforms include the fractional Fourier transforms, the Fresnel transform, as well as many other classical
transforms on $L^2(\R)$. The books \cite{G, HKOS, OZK, W} and survey paper
\cite{BM2} are excellent sources of information for these operators.

The linear canonical transforms can be defined in several ways. One way is to define them as parameterized
by the special linear group $SL(2, \R)$. More specifically, for any real matrix
$$A=\left[ {\begin{array}{*{20}c}
   a & b  \\
   c & d  \\
\end{array}} \right],\ \ \ \det(A)= ad-bc=1, $$
we define an operator $\F^A$ on $L^2(\R)$ by
\begin{equation}\label{eq1}
\F^A(f)(x)=\frac{1}{\sqrt{i\pi b}}\,e^{idx^2/b}\inr e^{-i(2xt-at^2)/b}f(t)\,dt
\end{equation}
for $b\neq0$. When $b=0$, we define
$$\F^A(f)(x)=\sqrt{d}\,e^{icdx^2}f(dx),$$
which is the limit in (\ref{eq1}) as $b\to0$. Note that the symbol $d$ here is a number but not the differential!

The linear canonical transforms include several prominent classical transforms as special cases. First,
for any angle $\alpha\in[-\pi,\pi]$, the matrix
$$A_\alpha=\left[\begin{matrix}\cos\alpha & \sin\alpha\\ -\sin\alpha & \cos\alpha\end{matrix}\right]$$
is clearly a member of $SL(2, \R)$. The corresponding linear canonical transform is given by
$$\F^{A_\alpha}f(x)=\frac{1}{\sqrt{i\pi\sin\alpha}}\,e^{ix^2\cot\alpha}\inr e^{-i(2xt\csc\alpha
-t^2\cot\alpha)}f(t)\,dt$$
when $\sin\alpha\not=0$, and
$$\F^{A_\alpha}f(x)=\sqrt{\cos\alpha}\,e^{-ix^2\sin\alpha\cos\alpha}f(x\cos\alpha)=\sqrt{\pm1}\,f(\pm x)$$
when $\sin\alpha=0$. Up to a unimodular constant, these are the classical fractional Fourier transforms.
In fact, since
\begin{equation}\label{eqcot}
\frac{e^{i\alpha/2}}{\sqrt{i\pi\sin\alpha}}=\frac{\sqrt{1-i\cot\alpha}}{\sqrt{\pi}},
\end{equation}
the classical $\alpha$th order fractional Fourier transform $\mathcal{F}^\alpha$ (see \cite{BM2, OZK} for
example) can be written as $\mathcal{F}^\alpha=e^{i\alpha/2}\F^{A_\alpha}$ for any $\alpha\in[-\pi,\pi)$.
When $\alpha=\pi/2$, we obtain the ordinary Fourier transform.
The inverse Fourier transform is obtained when $\alpha=-\pi/2$.

For any $\sigma>0$, the matrix
$$A=\left[\begin{matrix}1/\sigma & 0\\ 0 & \sigma\end{matrix}\right]$$
belongs to $SL(2, \R)$ and the corresponding linear canonical transform is given by
$$\F^A(f)(x)=\sqrt\sigma\,f(\sigma x).$$
This is ``scaling'' or ``dilation'' in $L^2(\R)$.

Let $b={\lambda l}/{\pi}>0$ (where $l$ and $\lambda$ represent distance and wavelength, respectively, in
mechanics). The linear canonical transform corresponding to the matrix
$$A=\left[\begin{matrix} 1 & b\\ 0 & 1\end{matrix}\right]$$
is given by
$$\F^A(f)(x)=\frac{1}{\sqrt{i\pi b}}\inr e^{i(x-t)^2/b}f(t)\,dt.$$
This is basically the classical Fresnel transform $\F^A_l$, which corresponds to shearing in continuum
mechanics. More precisely, we have
$$\F_l^{A}(f)(x)=e^{i\pi l/\lambda}\F^A(f)(x).$$

Finally, for the matrix
$$A=\left[\begin{matrix}1 & 0\\ \tau & 1\end{matrix}\right]$$
in $SL(2, \R)$, the associated linear canonical transform is given by
$$\F^A(f)(x)=e^{i\tau x^2}f(x),$$
which is traditionally called the ``chirp multiplication'' in optics.

Our next goal is to show that every linear canonical transform is unitarily equivalent to some operator
$T^{(s,t)}$ on $F^2$. To this end, recall that the Bargmann transform $B$ is the operator from
$L^2(\R)\rightarrow F^2$ defined by
$$Bf(z)=C\inr f(x)e^{2xz-x^2-(z^2/2)}\,dx,$$
where $C=(2/\pi)^{1/4}$. It is well known that $B$ is a unitary operator from $L^2(\R)$ onto $F^2$.
Furthermore, the inverse of $B$ is also an integral operator, namely,
$$B^{-1}f(x)=C\inc f(z)e^{2x\overline{z}-x^2-(\overline{z}^2/2)}\,d\lambda(z).$$
See \cite{B1, B2, G, Zhu1}.

We consider the action of the Bargmann transform on linear canonical transforms. In other
words, we are going to compute the equivalent form, under the Bargmann transform,
of the linear canonical transforms on the Fock space.

\begin{theorem}\label{thmC}
Suppose
$$A=\left[\begin{matrix} a & b\\ c & d\end{matrix}\right]$$
is a matrix in $SL(2, \R)$. Then the operator $T^{A}=B \F^A  B^{-1}:F^2\rightarrow F^2$ is given by
$T^A=C_AT^{(s, t)}$, where
\begin{equation}\label{eqst}
    s=\frac{a+bi+d-ci}{2}, \qquad t=\frac{a+bi-d+ci}{2}
\end{equation}
with $(s,t)\in SL(\C\times\C)$, and
$$C_A=\pm1=\begin{cases}
    \displaystyle
\frac{\sqrt{s}}{\sqrt{s+t-\overline{s}-\overline{t}}}\sqrt{\frac{s+t-\overline{s}
-\overline{t}}{s}},&{\rm if}\ b \neq 0,\\[10 pt]
    \displaystyle
\sqrt{s}\sqrt{\frac{1}{s+t}}\sqrt{\frac{s+t}{s}},& {\rm if}\ b=0.\end{cases}$$
\end{theorem}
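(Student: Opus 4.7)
The plan is to compute the integral kernel of $T^A=B\F^AB^{-1}$ directly and verify that it equals $C_A\,K^{(s,t)}(z,w)$, where $(s,t)=\varphi(A)$ is given by \eqref{eqst}. Since $B$, $B^{-1}$, and (for $b\neq 0$) $\F^A$ are all integral operators with explicit Gaussian kernels, the composition $T^A$ is an integral operator whose kernel is a two-dimensional real Gaussian integral that can be evaluated in closed form. The cases $b\neq 0$ and $b=0$ are handled separately.

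For $b\neq 0$, I would substitute the three Gaussian kernels and switch the order of integration to write
$$T^Af(z)=\inc f(w)\left[\frac{C^{2}}{\sqrt{i\pi b}}\inr\inr e^{Q(x,y;z,\bar w)}\,dx\,dy\right]d\lambda(w),$$
where $Q$ is a quadratic form in $(x,y)$ with coefficients depending on $a,b,d,z,\bar w$. Evaluating the inner Gaussian integral by completing the square (equivalently, via the multivariate formula $\iint e^{-\tfrac12\mathbf u^{T}M\mathbf u+\mathbf v\cdot\mathbf u}\,d\mathbf u=(2\pi/\sqrt{\det M})\,e^{\frac12\mathbf v^{T}M^{-1}\mathbf v}$, extended by analytic continuation from the positive-definite case), and repeatedly using $ad-bc=1$, one finds $\det M=-8is/b$; then, after the algebraic identifications $a+ib=s+t$, $b-ia=-i(s+t)$, and $b-id=-i(s-\bar t)$, the exponent collapses to exactly
$$\frac{tz^{2}-\overline{tw^{2}}+2z\bar w}{2s}.$$
Thus $K^A(z,w)=C_A\,K^{(s,t)}(z,w)$, with $C_A$ the product of the prefactors $C^{2}/\sqrt{i\pi b}$, $2\pi/\sqrt{\det M}$, and $\sqrt s$ (the last one brought out to match the $1/\sqrt s$ normalization in $K^{(s,t)}$). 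Using $i\pi b\cdot(-8is/b)=8\pi s$ and $s+t-\bar s-\bar t=2ib$, this prefactor rearranges into the form stated in the theorem. The case $b=0$ is simpler and reduces to a single real Gaussian integral: there $\F^A f(x)=\sqrt d\,e^{icdx^{2}}f(dx)$, so the composition becomes a single integration of $B^{-1}f$ against a Gaussian (after modulation and dilation), and \eqref{eqIC} delivers the kernel directly in the desired form.

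The main obstacle will be tracking the branches of the complex square roots so that the constant $C_A$ comes out in precisely the asymmetric form stated in the theorem. Under the convention $\sqrt z=\sqrt{|z|}\,e^{i\theta/2}$ with $\theta\in(-\pi,\pi]$, the identity $\sqrt{zw}=\sqrt z\sqrt w$ can fail by a sign, so the several square roots appearing (from $\sqrt{i\pi b}$, $\sqrt{\det M}=\sqrt{-8is/b}$, and the $\sqrt s$ in $K^{(s,t)}$) must be combined with care. The fact that $|C_A|=1$ will follow automatically, since both $B\F^A B^{-1}$ and $T^{(s,t)}$ are unitary (by Theorem~\ref{thmTB} and unitarity of $\F^A$ on $L^2(\R)$), so the real content of the final bookkeeping step is pinning down the sign. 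This is most efficiently done by arranging the leftover square roots into matching pairs such as $\sqrt{s}\,\sqrt{(s+t-\bar s-\bar t)/s}/\sqrt{s+t-\bar s-\bar t}$, which makes the $\pm 1$ ambiguity transparent while respecting the chosen branch convention.
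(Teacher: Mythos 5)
Your proposal is correct and follows essentially the same route as the paper: compose the three explicit Gaussian kernels, interchange the order of integration so the $w$-integral sits outside, evaluate the resulting real Gaussian integral (your $\det M=-8is/b$ and the identifications $a+ib=s+t$, $s+t-\bar s-\bar t=2ib$ all check out), and then resolve the branch-of-square-root bookkeeping to pin down $C_A=\pm1$. The only cosmetic difference is that the paper evaluates the two one-dimensional Gaussian integrals sequentially via Lemma~\ref{lemcal} (justifying Fubini by taking $f$ a polynomial), whereas you do the two-dimensional Gaussian in one shot.
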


\begin{proof}
First we assume $b\neq0$. For the purpose of applying Fubini's theorem in the calculations below,
we assume that $f$ is any polynomial (recall that the polynomials are dense
in $F^2$, and under the inverse Bargmann transform, they become the Hermite polynomials times the Gauss function, which have very good integrability properties on the real line).
Recall that $C=(2/\pi)^{1/4}$ and let us write $C'=\frac{1}{\sqrt{ib\pi}}$ for simplicity. Then we have
\begin{align*}
    \F^A (B^{-1}f)(x)
    &=CC'e^{idx^2/b}\inr e^{-i(2xt-at^2)/b}\,dt\inc f(w)e^{2t\overline{w}-t^2-(\overline{w}^2/2)}\,d\lambda(w)\\
    &=CC'e^{idx^2/b}\inc f(w)\exp\left[-\frac{\overline{w}^2}{2}+\frac{(b\overline{w}-ix)^2}{b(b-ia)}\right]
    \,d\lambda(w)\\
    &\qquad\qquad\qquad\qquad\cdot\inr\exp\left[-\Bigl(1-i\frac{a}{b}\Bigr)
    \Bigl(t-\frac{b\overline{w}-i x}{b-ia}\Bigr)^2\right]\,dt.
\end{align*}
It follows from Lemma~\ref{lemcal} that{\small
\begin{align*}
   &\F^A (B^{-1}f)(x)\\
   &=C''e^{idx^2/b}
    \inc f(w)\exp\left[-\frac{\overline{w}^2}{2}+\frac{(b\overline{w}-ix)^2}{b(b-ia)}\right]\,d\lambda(w)\\
    &=C''\exp\left[\Bigl(\frac{id}b-\frac1{b(b-ia)}\Bigr)\,x^2\right]\inc f(w)
    \exp\left[\frac{b+ia}{2(b-ia)}\overline{w}^2-\frac{2i}{b-ia}x\overline{w}\right]\,d\lambda(w),
\end{align*}}
where $C''=CC'\sqrt{\frac{b\pi}{b-ia}}$.
Since $ad-bc=1$, a direct calculation shows that
$$(b-id)(b-ia)+1=b(b-ia-c-id).$$
Therefore,{\small
\begin{align*}
    &B \F^AB^{-1}f(z)\\
    &=CC''\inr\exp\left[2xz-\frac{z^2}{2}-\frac{(b-id)(b-ia)+1}{b(b-ia)}
       \right]dx\\
    &\qquad\qquad\qquad\cdot\inc f(w)\exp\left[\frac{b+ia}{2(b-i a)}\overline{w}^2
    -\frac{2i}{b-i a}x\overline{w}\right]\,d\lambda(w)\\
    &=CC''e^{-z^2/2}\inc f(w)\exp\left[\frac{b+ia}{2(b-ia)}\overline{w}^2+
    \frac{\left[(b-ia)z-i\overline{w}\right]^2}{(b-ia)(b-ia-c-id)}\right]\,d\lambda(w)\\
    &\qquad\qquad\qquad\cdot\inr \exp\left[-\frac{b-ia-c-id}{b-ia}
    \left(x-\frac{(b-ia)z-i\overline{w}}{b-ia-c-id}\right)^2\right]dx.
\end{align*}}
Observe that
$$
{\rm Re}\,\frac{b-ia-c-id}{b-ia}
=\frac{b^2+a^2+1}{b^2+a^2}>0.$$
Using Lemma~\ref{lemcal} again, we obtain
\begin{align*}
    B \F^AB^{-1}f(z)
    &=C_A\frac{\sqrt{2}}{\sqrt{a+bi-ci+d}}\,e^{-z^2/2}\\
    &\cdot\inc f(w)\exp\left[\frac{b+ia}{2(b-ia)}\overline{w}^2+
    \frac{\left[(b-ia)z-i\overline{w}\right]^2}{(b-ia)(b-ia-c-id)}\right]\,d\lambda(w)
\end{align*}
where $$C_A=\frac{\sqrt{a+bi-ci+d}}{\sqrt{ib}}\sqrt{\frac{ib}{a+bi+d-ci}}.$$
Then a few lines of elementary calculations show that
\begin{align*}
    &B\F^A  B^{-1}f(z)=C_A\frac{\sqrt{2}}{\sqrt{a+bi-ci+d}}\\
    &\qquad \cdot\inc f(w)\exp\left[\frac{(a+bi+ci- d)z^2-{(a-bi-ci-d)}\overline{w}^2
    +4 z\overline{w}}{2(a+ib-ci+ d)}\right]\,d\lambda(w)\\
    &=\frac{C_A}{\sqrt{s}}\inc f(w)e^{\frac{t}{2s}z^2-\frac{\overline{t}}{2s}\overline{w}^2+\frac{ z\overline{w}}{s}}\,d\lambda(w),
\end{align*}
where
$$s=\frac{a+bi+d-ci}{2},\qquad t=\frac{a+bi-d+ci}{2}.$$
It is easy to check that $|s|^2=|t|^2+1$, so $(s,t)\in SL(\C\times\C)$.

Next we assume $b=0$. Note that
$$d=\frac{1}{a}=\frac{1}{s+t}$$
The arguments used above can be simplified to show that
$$T^{A}f(z)=\sqrt{s}\,\sqrt{\frac{1}{s+t}}\,\sqrt{\frac{s+t}{s}}\,T^{(s, t)}f(z).$$
This completes the proof of the theorem.
\end{proof}

As a consequence of Theorem~\ref{thmC}, we immediately derive a number of basic properties for
the operators $\F^A$. In particular, we obtain an alternative proof of the unitarity and the composition
formula of linear canonical transforms.

\begin{corollary}\label{4}
Let $A, A_{1}, A_{2}\in SL(2, \mathbb{R})$. Then the following statements hold.
\begin{enumerate}
  \item[(a)]$\F^A$ is a unitary operator on $L^2(\R)$.
  \item[(b)] $\F^{A_{1}}\F^{A_{2}}=C\F^{A_{1}A_{2}}$ for some $C=\pm1$.
  \item[(c)] $\left(\F^A\right)^{-1}=C\F^{{A}^{-1}}$, 
  where $C=-1$ whenever $a<0$ and $b=0$, and $C=1$ for other cases.
  Here $A=\left[\begin{matrix} a & b\\ c & d\end{matrix}\right]\in SL(2, \mathbb{R})$.
\end{enumerate}
\end{corollary}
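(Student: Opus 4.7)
The plan is to derive all three parts from Theorem~\ref{thmC}, combined with Theorems~\ref{thmG} and~\ref{thmT}. The unifying point is that Theorem~\ref{thmC} transports $\F^A$ to the Fock space as $\F^A = B^{-1} C_A T^{(s,t)} B$, where $(s,t)=\varphi(A)$, $C_A = \pm 1$, and $B$ is the (unitary) Bargmann transform.

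For part (a), since $B$ is unitary and $(s,t) = \varphi(A) \in SL(\C\times\C)$ by Theorem~\ref{thmG}, Theorem~\ref{thmTB}(c) gives that $T^{(s,t)}$ is unitary on $F^2$. Combined with $|C_A| = 1$, we conclude that $\F^A = B^{-1} C_A T^{(s,t)} B$ is unitary on $L^2(\R)$.

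For part (b), composing the formulas for $\F^{A_1}$ and $\F^{A_2}$ gives
$$\F^{A_1}\F^{A_2} = C_{A_1}C_{A_2}\, B^{-1} T^{(s_1,t_1)} T^{(s_2,t_2)} B,$$
where $(s_k, t_k) = \varphi(A_k)$. By Theorem~\ref{thmT} (whose proof used Lemma~\ref{lem2}), $T^{(s_1,t_1)} T^{(s_2,t_2)} = C_{(s,t)} T^{(s_1,t_1)\cdot(s_2,t_2)}$ with $C_{(s,t)} = \pm 1$, and by Theorem~\ref{thmG} we have $(s_1,t_1) \cdot (s_2,t_2) = \varphi(A_1 A_2)$. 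Applying Theorem~\ref{thmC} in reverse to express $T^{\varphi(A_1 A_2)}$ via $\F^{A_1 A_2}$ and collecting the three $\pm 1$ factors yields $\F^{A_1}\F^{A_2} = C \F^{A_1 A_2}$ with $C = \pm 1$.

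For part (c), take $A_2 = A^{-1}$ in (b). Then $A_1 A_2 = I$, and a direct check from the $b = 0$ formula (with $a = d = 1$, $c = 0$) gives $\F^I = I$. Hence $\F^A \F^{A^{-1}} = C \cdot I$, i.e. $(\F^A)^{-1} = C \F^{A^{-1}}$ with $C = \pm 1$. Pinning down the sign is where the main work lies. When $b = 0$ the direct definitions give $\F^A f(x) = \sqrt{d}\, e^{icdx^2} f(dx)$ and $\F^{A^{-1}} f(x) = \sqrt{a}\, e^{-icax^2} f(ax)$; composing these, the quadratic exponents cancel via $ad = 1$ and one is left with $\F^A \F^{A^{-1}} f(x) = (\sqrt{a}\,\sqrt{d})\, f(x)$. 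When $a > 0$, both $\sqrt{a}$ and $\sqrt{d}$ are positive reals and the product is $+1$; when $a < 0$, so $d < 0$, the chosen branch $\sqrt{z} = \sqrt{|z|}\,e^{i\theta/2}$ with $\theta \in (-\pi, \pi]$ forces $\sqrt{a} = i\sqrt{|a|}$ and $\sqrt{d} = i\sqrt{|d|}$, giving $\sqrt{a}\,\sqrt{d} = -1$. For $b \neq 0$ the most economical route is to apply (b) to the pair $(A, A^{-1})$, carry out a Gaussian convolution identical in form to the one in the proof of Theorem~\ref{thmC} on a Schwartz function, and verify that every branch of every square root composes into $+1$; the obstacle here is purely bookkeeping the square-root branches through the composition.
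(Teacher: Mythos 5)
Parts (a) and (b) of your argument coincide with the paper's proof: both reduce everything through $\F^A=B^{-1}C_AT^{(s,t)}B$ via Theorem~\ref{thmC}, use Theorem~\ref{thmTB}(c) for unitarity, and use Theorem~\ref{thmT} together with the isomorphism $\varphi$ of Theorem~\ref{thmG} for the composition law. Your treatment of the $b=0$ case of (c) is correct and in fact more direct than the paper's: composing $\F^Af(x)=\sqrt d\,e^{icdx^2}f(dx)$ with $\F^{A^{-1}}f(x)=\sqrt a\,e^{-icax^2}f(ax)$ on $L^2(\R)$ and using $ad=1$ gives $\sqrt a\,\sqrt d=\pm1$ with the sign determined by the branch convention, exactly reproducing $C=\mathrm{sgn}(a)$; the paper instead reads this off from the Fock-space constants, where the same answer appears as $\sqrt{1/(s+t)}\sqrt{1/(\overline s-t)}=\mathrm{sgn}(s+t)$ with $s+t=a$.

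The gap is in the case $b\neq0$ of part (c), which is where the real content of the sign claim lies. You assert that ``every branch of every square root composes into $+1$'' and that the obstacle is ``purely bookkeeping,'' but this is not bookkeeping: with the branch $\sqrt z=\sqrt{|z|}e^{i\theta/2}$, $\theta\in(-\pi,\pi]$, a product such as $\sqrt{z/s}\,\sqrt{\overline z/\overline s}$ equals $+|z|/|s|$ only when $z/s$ is \emph{not} a negative real, and nothing in the formal composition rules this out. The paper's proof devotes most of part (c) to exactly this point: writing $z=s+t-\overline s-\overline t=2bi$, it shows that $(s+t-\overline s-\overline t)/s<0$ would force $s=-\overline s$ and then $|s|^2<|\im t|^2\le|t|^2$, contradicting $|s|^2=|t|^2+1$. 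So the exclusion of the bad branch uses the $SL(\C\times\C)$ constraint in an essential way and must be argued, not merely checked. Your alternative suggestion of computing $\F^A\F^{A^{-1}}$ directly as a Gaussian (Fresnel) convolution on a Schwartz function could also work, but there the quadratic phases in the inner variable cancel and the integral degenerates to an oscillatory (non--absolutely convergent) one, so that route needs its own justification and again a branch analysis of $\sqrt{i\pi b}\,\sqrt{-i\pi b}$; as written, the claim $C=1$ for $b\neq0$ is stated but not proved.
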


\begin{proof}
Condition (a) clearly follows from Theorems~\ref{thmTB} and \ref{thmC}.

Recall from Theorem~\ref{thmG} that $SL(2, \R)$ is isomorphic to $SL(\C\times\C)$. 
Therefore, it follows from Theorems~\ref{thmT} and \ref{thmC} that
\begin{eqnarray*}
B\F^{A_{1}}\F^{A_{2}}B^{-1}f(z)&=&C_{A_1}C_{A_2}T^{(s_1, t_1)}T^{(s_2, t_2)} f(z)\\
&=&C_{A_1}C_{A_2}C_{(s,t)}T^{(s_1, t_1)\cdot(s_2, t_2)} f(z)\\
&=&C\ B\F^{A_{1}A_{2}}B^{-1}f(z)
\end{eqnarray*}
for any $f\in F^2$, where $C=C_{A_1}C_{A_2}C_{A_1A_2}C_{(s,t)}=\pm1$. This proves (b).

A special case of assertion (b) is that
$$B\F^{A^{-1}}\F^A B^{-1} f(z)=CT^{(1, 0)}f(z)
=C\inc f(u)e^{\overline{w}z}\,d\lambda(w)=Cf(z),$$
where 
$$C=\frac{|s|}{\sqrt{s+t-\overline{s}-\overline{t}}}\sqrt{\frac{s+t-\overline{s}-\overline{t}}{s}}
\frac{1}{\sqrt{\overline{s}-t-s+\overline{t}}}\sqrt{\frac{\overline{s}-t-s+\overline{t}}{\overline{s}}}$$
when the real number $-i(s+t-\overline{s}-\overline{t})\neq 0$, and
$$C=|s|\sqrt{\frac{1}{s+t}}\sqrt{\frac{s+t}{s}}\sqrt{\frac{1}{\overline{s}-t}}\sqrt{\frac{\overline{s}-t}{\overline{s}}}$$
when $-i(s+t-\overline{s}-\overline{t})= 0$.

Suppose $-i(s+t-\overline{s}-\overline{t})\neq 0$. Then $(s+t-\overline{s}-\overline{t})/{s}<0$ if and only if
$$s=-\overline{s}\qquad {\rm and}\qquad [-i(s+t-\overline{s}-\overline{t})](-is)<0.$$
However, this condition means that
$$|s|^2=|-is|^2<|\im t|\leq |t|^2,$$
which contradicts with the fact that $(s,t)\in SL(\C\times\C)$.
Therefore,
$$\sqrt{\frac{s+t-\overline{s}-\overline{t}}{s}}\sqrt{\frac{\overline{s}-t-s+\overline{t}}{\overline{s}}}=
\frac{|s+t-\overline{s}-\overline{t}|}{|s|}.$$
It is clear that $s+t-\overline{s}-\overline{t}$ is not a negative number as well. So we have  $C=1$.

Now we consider the remaining case $-i(s+t-\overline{s}-\overline{t})=0$.
Since $(s,t)\in SL(\C\times\C)$, it is clear that both $s+t$ and $\overline{s}-t$ are nonzero real numbers, and
$$(s+t)(\overline{s}-t)=1>0.$$
Note that $$\re\left(\frac{s+t}{s}\right)>0\ \qquad {\rm and}\qquad \re\left(\frac{\overline{s}-t}{\overline{s}}\right)>0.$$
So we have $$C=\sqrt{\frac{1}{s+t}}\sqrt{\frac{1}{\overline{s}-t}}={\rm sgn} (s+t).$$
This proves (c) and completes the proof of the corollary.
\end{proof}

We encountered the possibility of an additional minus sign in the unitary representation (sometimes
called the composition formula in the literature) of the special linear group $SL(\C\times\C)$. This problem
is known as the metaplectic sign problem and is carefully studied in \cite[Sect. 9.1.4]{W}.

Recall that for each fixed $(s,t)\in SL(\C\times\C)$ with $|\re s|<1$ we found a sequence $\{\lambda_n\}$
of eigenvalues for the unitary operator $T^{(s,t)}: F^2\to F^2$ with the corresponding eigenfunctions
$$f_n(z)=Q_n(z) e^{\frac{\gamma}{2}z^2}, n=0,1,2,\cdots.$$
Under the inverse Bargmann transform, these eigenfunctions change as follows:
\begin{align*}
    B^{-1}(f_n)(x)&=C\inc\exp\left[2x\overline{z}-x^2-\frac{\overline{z}^2}{2}\right]\,d\lambda(z)\\
    &\qquad\cdot\inr H_n(t/\rho)\exp\left[-\frac{1-\gamma}{1+\gamma}t^2-\frac{1}{2}z^2 +2tz-t^2\right]dt\\
    &=CH_n(x/\rho)\exp\left[-\frac{1-\gamma}{1+\gamma}x^2\right],
\end{align*}
where $H_n$ are the Hermite polynomials and $C=(2/\pi)^{1/4}$.
In view of \eqref{eqst}, we have $|\re s|<1$ if and only if $|a+d|<2$.
See \cite{PD2} for related work on linear canonical transforms.

As another consequence of Theorem~\ref{thmC}, we obtain the equivalent form, via the Bargmann transform,
of several linear canonical transforms on $F^2$.

First consider the fractional Fourier transform, which is one of the most valuable and powerful tools in
mathematics, quantum mechanics, optics and signal processing. There are several well-accepted
normalizations for the fractional Fourier transform. We will chose the folowing definition.
For $\alpha\in[-\pi, \pi]$ (or for any real number $\alpha$), we define the $\alpha$th order fractional
Fourier transform by
$$\mathcal{F}^\alpha(f)(x)=\frac{\sqrt{1-i\cot\alpha}}{\sqrt{\pi}}e^{ix^2\cot\alpha}
\inr e^{-i(2xt\csc\alpha-t^2\cot\alpha)}f(t)\,dt.$$

Several special cases are worth mentioning. First, the case $\alpha=0$ needs to be interpretted as a
limit, and as such, it is just the identity operator (this is partial justification for the particular normalization
above). When $\alpha=\pm\pi$, we also need to interpret the corresponding transform as a limit, and
an elementary calculation shows that $\F^{\pm\pi}(f)(x)=f(-x)$, which is also called the parity transform.
The most prominent special cases are when $\alpha=\pm\,\pi/2$: $\F^{\pi/2}$ is the classical Fourier
transform and $\F^{-\pi/2}$ is the inverse Fourier transform. See \cite{BM2, OZK} for more information
about fractional Fourier transforms.

In the framework of linear canonical transforms, we have
\begin{equation}\label{eq2}
\F^\alpha=e^{i\alpha/2}\F^{A_\alpha},\qquad \alpha\in[-\pi,\pi),
\end{equation}
where
$$A_\alpha=\left[ {\begin{array}{*{20}c}
   \cos\alpha & \sin\alpha  \\
   -\sin\alpha & \cos\alpha  \\
\end{array}} \right].$$
This leads to the following result, which can be found in \cite{DZ}.

\begin{corollary}\label{3}
For any $\alpha\in[-\pi, \pi]$ the operator
$$T=B\mathcal{F}^\alpha  B^{-1}:F^2\rightarrow F^2$$ is given by $Tf(z)=f(e^{-i\alpha}z)$ for all $f\in F^2$. Consequently, the operator $$T^{-1}=B\left(\mathcal{F}^\alpha \right)^{-1}B^{-1}:F^2\rightarrow F^2,$$ where $\left(\mathcal{F}^\alpha \right)^{-1}$ is the inverse fractional Fourier transform, is given by $T^{-1}f(z)=f(e^{i\alpha}z)$ for all $f\in F^2$.
\end{corollary}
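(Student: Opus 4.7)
The plan is to deduce the corollary as a direct specialization of Theorem~\ref{thmC}. Substituting $a=d=\cos\alpha$ and $b=-c=\sin\alpha$ into \eqref{eqst} gives the pair $(s,t)=(e^{i\alpha},0)$, so Theorem~\ref{thmC} supplies the identity $B\F^{A_\alpha}B^{-1}=C_{A_\alpha}T^{(e^{i\alpha},0)}$ with $C_{A_\alpha}=\pm1$ read off from that theorem. The essential simplification is that with $t=0$ the integral kernel collapses to a scalar multiple of the reproducing kernel at $e^{-i\alpha}z$,
$$K^{(e^{i\alpha},0)}(z,w)=\frac{1}{\sqrt{e^{i\alpha}}}\,e^{e^{-i\alpha}z\overline w}=\frac{1}{\sqrt{e^{i\alpha}}}\,K(e^{-i\alpha}z,w),$$
so the reproducing property of $F^2$ yields
$$T^{(e^{i\alpha},0)}f(z)=\frac{1}{\sqrt{e^{i\alpha}}}\,f(e^{-i\alpha}z)$$
for every $f\in F^2$.

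Next I combine this with $\F^\alpha=e^{i\alpha/2}\F^{A_\alpha}$ from \eqref{eq2} to obtain
$$B\F^\alpha B^{-1}f(z)=\frac{e^{i\alpha/2}\,C_{A_\alpha}}{\sqrt{e^{i\alpha}}}\,f(e^{-i\alpha}z),$$
and the entire proof reduces to showing that the scalar prefactor equals $1$. For $\alpha\in(-\pi,\pi)$ the paper's branch convention gives $\sqrt{e^{i\alpha}}=e^{i\alpha/2}$, leaving just the claim $C_{A_\alpha}=1$. Substituting $s=e^{i\alpha}$, $t=0$ into the explicit formula from Theorem~\ref{thmC}, the $b\ne 0$ expression becomes $\sqrt{e^{i\alpha}}\,\sqrt{1/(2i\sin\alpha)}\,\sqrt{2i\sin\alpha/e^{i\alpha}}$, which a direct argument check on $\alpha\in(0,\pi)$ and $\alpha\in(-\pi,0)$ separately (verifying that none of the three factors crosses the branch cut on the negative real axis) collapses to $1$. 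When $\sin\alpha=0$, i.e.\ $\alpha=0$, the $b=0$ branch of the formula trivially gives $C_{A_0}=1$ and $T^{(1,0)}$ is the identity on $F^2$. The endpoints $\alpha=\pm\pi$ are handled by continuity, in line with the paper's limit convention for $\F^{\pm\pi}$.

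Finally, the formula for $(\F^\alpha)^{-1}$ follows instantly from what has been proved, either by specializing to $-\alpha$ and invoking $(\F^\alpha)^{-1}=\F^{-\alpha}$ (a consequence of Corollary~\ref{4}), or by directly inverting the invertible operator $f\mapsto f(e^{-i\alpha}z)$ on $F^2$. The only nontrivial point in the entire argument is the branch-of-square-root bookkeeping in $C_{A_\alpha}$; every other step is a one-line application of Theorem~\ref{thmC} combined with the reproducing property.
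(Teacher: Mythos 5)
Your proof is correct and follows essentially the same route as the paper: specialize Theorem~\ref{thmC} to $\varphi(A_\alpha)=(e^{i\alpha},0)$, collapse $T^{(e^{i\alpha},0)}$ via the reproducing property of $K(z,w)=e^{z\overline w}$, and verify that the unimodular prefactor from \eqref{eq2} and $C_{A_\alpha}$ equals $1$ (which the paper disposes of by citing \eqref{eqcot} where you do an explicit branch check). The only cosmetic difference is your more detailed bookkeeping of square-root branches and the endpoint cases, which the paper leaves implicit.
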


\begin{proof}
First notice that $\varphi(A)=(e^{i\alpha},0)$ for the real rotation matrix $A$ above in view of \eqref{eqst}.
So it follows from \eqref{eqcot}, \eqref{eq2} and Theorem~\ref{thmC} that
$$B\mathcal{F}^\alpha B^{-1}f(z)=\inc f(w)\exp\left[e^{-i\alpha}z\overline{w}\right]\,d\lambda(w)
=f(e^{-i\alpha}z).$$
It is then clear that $B\left(\mathcal{F}^\alpha \right)^{-1}B^{-1}f(z)=f(e^{i\alpha}z)$.
\end{proof}

Recall that for any positive $r$ the (weighted) dilation operator $D_r: L^2(\R)\longrightarrow L^2(\R)$ is
defined by $D_rf(x)=\sqrt{r}\,f(rx)$. We have $D_r=\F^{A_r}$ with
$$A_r=\left[\begin{matrix} 1/r & 0\\ 0 & r\end{matrix}\right],$$
and the linear canonical transform $\F^{A_r}$ in this case is also called {\it scaling}.

\begin{corollary}
For any positive $r$ the Bargmann transform takes the operator $D_r$ to the following unitarily equivalent
form on the Fock space:
\begin{align*}
T_rf(z)&=BD_rB^{-1}f(z)\\
&=\sqrt{\frac{2r}{1+r^2}}\inc f(w)\exp\left[\frac{2r}{1+r^2}z\overline{w}
+\frac{1- r^2}{2(1+r^2)}(z^2-\overline{w}^2)\right]\,d\lambda(w).
\end{align*}
\end{corollary}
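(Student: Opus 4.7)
The plan is to specialize Theorem~\ref{thmC} to the dilation matrix $A_r$ and then simplify the resulting integral kernel. Since $A_r$ has entries $a=1/r$, $b=0$, $c=0$, $d=r$, I would first apply the isomorphism $\varphi$ from Theorem~\ref{thmG} and formula \eqref{eqst} to read off
$$s=\frac{a+d}{2}=\frac{1+r^2}{2r},\qquad t=\frac{a-d}{2}=\frac{1-r^2}{2r}.$$
A quick check gives $|s|^2-|t|^2=\frac{(1+r^2)^2-(1-r^2)^2}{4r^2}=1$, confirming $(s,t)\in SL(\C\times\C)$, which is consistent with $\det A_r=1$.

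Next I would invoke the $b=0$ branch of Theorem~\ref{thmC} to conclude that
$$T_r f(z)=BD_rB^{-1}f(z)=C_{A_r}\,T^{(s,t)}f(z),\qquad C_{A_r}=\sqrt{s}\sqrt{\tfrac{1}{s+t}}\sqrt{\tfrac{s+t}{s}}.$$
Here the point is that $s=(1+r^2)/(2r)>0$ and $s+t=1/r>0$ for $r>0$, so all three complex square roots reduce to ordinary positive square roots, which together collapse to $C_{A_r}=1$. This is the only genuinely delicate step, because in general one only knows $C_A=\pm1$; the positivity of both $s$ and $s+t$ is what pins the sign down.

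Finally, I would just substitute the values of $s$ and $t$ (which are real, so $\overline t=t$) into the definition of the kernel $K^{(s,t)}(z,w)$. Computing
$$\frac{1}{\sqrt{s}}=\sqrt{\frac{2r}{1+r^2}},\qquad \frac{1}{s}=\frac{2r}{1+r^2},\qquad \frac{t}{2s}=\frac{1-r^2}{2(1+r^2)},$$
and grouping the $z^2$ and $\overline w^2$ terms (which share the same coefficient since $t=\overline t$) gives exactly
$$T_r f(z)=\sqrt{\tfrac{2r}{1+r^2}}\inc f(w)\exp\!\left[\tfrac{2r}{1+r^2}z\overline w+\tfrac{1-r^2}{2(1+r^2)}(z^2-\overline w^2)\right]d\lambda(w),$$
which is the claimed formula. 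Hence the whole corollary is a direct computation once Theorem~\ref{thmC} is in hand; there is no substantive obstacle beyond carefully verifying $C_{A_r}=+1$.
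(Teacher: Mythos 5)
Your proposal is correct and follows essentially the same route as the paper: specialize Theorem~\ref{thmC} to $A_r$, compute $\varphi(A_r)=\bigl(\tfrac{1+r^2}{2r},\tfrac{1-r^2}{2r}\bigr)$, and substitute into the kernel. Your explicit check that $C_{A_r}=+1$ (using $s>0$ and $s+t=1/r>0$) is a detail the paper leaves implicit, but it is the same argument.
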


\begin{proof}
This follows from Theorem~\ref{thmC} and the observation that
$$\varphi(A_r)=\left(\frac{1+r^2}{2r},\frac{1- r^2}{2r}\right).\qquad\qquad\qedhere$$
\end{proof}

For a positive parameter $b$, the matrix
$$A_b=\begin{bmatrix} 1 & b\\ 0 & 1\end{bmatrix}$$
in $SL(2, \R)$ gives rise to the following linear canonical transform:
$$\F^{A_b}(f)(x)=\frac{1}{\sqrt{i\pi b}}\inr e^{i(x-t)^2/b}f(t)\,dt.$$
This is called the Fresnel transform or the Gauss-Weierstrass transform. It is also called the chirp convolution.

\begin{corollary}
With the above notation, the Bargmann transform takes the operator $\\F^{A_b}$ on $L^2(\R)$ to the
following unitarily equivalent form on $F^2$:
\begin{align*}
T_bf(z)&=B\F^{A_b}B^{-1}f(z)\\
&=\sqrt{\frac{2}{2+i b}}\inc f(w)\exp\left[z\overline{w}+\frac{bi}{2(2+ bi )}(z-\overline{w})^2\right]
\,d\lambda(w).
\end{align*}
\end{corollary}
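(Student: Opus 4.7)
The plan is to apply Theorem~\ref{thmC} directly to the matrix $A_b=\begin{bmatrix}1 & b\\ 0 & 1\end{bmatrix}$, which corresponds to the parameters $a=1$, $c=0$, $d=1$ (and with the given $b>0$ playing the role of the $(1,2)$ entry). Using formula \eqref{eqst} I would first compute
$$s=\frac{1+ib+1-0}{2}=1+\frac{bi}{2}=\frac{2+bi}{2},\qquad t=\frac{1+ib-1+0}{2}=\frac{bi}{2},$$
observe that $|s|^2-|t|^2=1+b^2/4-b^2/4=1$, so $(s,t)\in SL(\C\times\C)$, and note that $1/\sqrt{s}=\sqrt{2/(2+ib)}$ already matches the prefactor appearing in the statement.

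Since $b\neq 0$, Theorem~\ref{thmC} then gives $T_b=B\F^{A_b}B^{-1}=C_{A_b}\,T^{(s,t)}$ with $C_{A_b}=\pm 1$. Substituting the above values of $s$ and $t$ into the defining formula for $T^{(s,t)}$ from Section~2, the three coefficients inside the exponential become $t/(2s)=bi/[2(2+bi)]$, $-\overline{t}/(2s)=bi/[2(2+bi)]$, and $1/s=2/(2+bi)$. A one-line algebraic rearrangement then verifies
$$\frac{bi}{2(2+bi)}\bigl(z^2+\overline{w}^2\bigr)+\frac{2z\overline{w}}{2+bi}=z\overline{w}+\frac{bi}{2(2+bi)}(z-\overline{w})^2,$$
which is precisely the exponent appearing in the claimed formula.

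The only remaining point is to confirm $C_{A_b}=+1$ rather than $-1$. Here $s+t-\overline{s}-\overline{t}=(1+bi)-(1-bi)=2bi$, so $C_{A_b}=\frac{\sqrt{s}}{\sqrt{2bi}}\sqrt{2bi/s}$. Using the branch convention $\sqrt{z}=\sqrt{|z|}\,e^{i\theta/2}$ with $\theta\in(-\pi,\pi]$ stated in the introduction, and the fact that for $b>0$ both $s$ and $2bi/s$ lie in the open first quadrant (so their half-arguments lie in $(0,\pi/4)$), I would track the four arguments and check that they cancel to give $C_{A_b}=1$. I expect this small branch bookkeeping to be the only delicate step; everything else reduces to routine substitution into the general formula of Theorem~\ref{thmC} and the algebraic identity above.
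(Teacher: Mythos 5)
Your proposal is correct and follows exactly the paper's route: the paper's proof consists of the single observation that $\varphi(A_b)=\bigl(\tfrac{2+ib}{2},\tfrac{ib}{2}\bigr)$ together with an appeal to Theorem~\ref{thmC}. The additional steps you supply -- rearranging the exponent of $K^{(s,t)}$ into the form $z\overline{w}+\tfrac{bi}{2(2+bi)}(z-\overline{w})^2$ and tracking the branch arguments to confirm $C_{A_b}=1$ -- are precisely the routine verifications the paper leaves implicit, and your computations check out.
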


\begin{proof}
This follows from Theorem~\ref{thmC} and the fact that
$$\varphi(A_b)=\left(\frac{2+i b}{2},\frac{i b}{2}\right).\qquad\qquad\qedhere$$
\end{proof}

Finally in this section, recall that the chirp multiplication (or multiplication by a Gaussian) is the linear
canonical transform $\F^{A_c}$ on $L^2(\R)$ corresponding to
$$A_c=\begin{bmatrix} 1& 0\\ c & 1\end{bmatrix}.$$
More specifically, $\F^{A_c}(f)(x)=e^{i cx^2}f(x)$.

\begin{corollary}
The Bargmann transform sends the chirp multiplication $\F^{A_c}$ on $L^2(\R)$ above to the following
unitarily equivalent form on the Fock space $F^2$:
\begin{align*}
T_cf(z)&=B\F^{A_c}B^{-1}f(z)\\
&=\sqrt{\frac{2}{2-i c}}\inc f(w)\exp\left[z\overline{w}+\frac{ic}{2(2-i c)}
(z+\overline{w})^2\right]\,d\lambda(w).
\end{align*}
\end{corollary}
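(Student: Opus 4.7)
The plan is to apply Theorem~\ref{thmC} directly to the matrix
$$A_c=\begin{bmatrix} 1& 0\\ c & 1\end{bmatrix}\in SL(2,\R)$$
and then rearrange the resulting kernel into the advertised form. This is entirely parallel to the two preceding corollaries (dilation and Fresnel), so the main task is bookkeeping rather than any genuine new idea.

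First, I would compute the image of $A_c$ under the isomorphism $\varphi$ of Theorem~\ref{thmG}. With $a=d=1$, $b=0$, and parameter $c$, formula \eqref{eqst} gives
$$s=\frac{a+bi+d-ci}{2}=\frac{2-ic}{2},\qquad t=\frac{a+bi-d+ci}{2}=\frac{ic}{2},$$
and one checks $|s|^{2}-|t|^{2}=1$, so $(s,t)\in SL(\C\times\C)$, as Theorem~\ref{thmG} predicts. Note also the key identity $s+t=1$, which will trivialize the constant.

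Next I would invoke the $b=0$ case of Theorem~\ref{thmC}, which gives $B\F^{A_c}B^{-1}=C_{A_c}T^{(s,t)}$ with
$$C_{A_c}=\sqrt{s}\,\sqrt{\frac{1}{s+t}}\,\sqrt{\frac{s+t}{s}}.$$
Since $s+t=1$ and $\re s=1>0$, all three branches are unambiguous and multiply to $1$, so $T_c=T^{(s,t)}$.

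Finally, I would rewrite the kernel of $T^{(s,t)}$ for these specific values of $s,t$. Starting from
$$K^{(s,t)}(z,w)=\frac{1}{\sqrt{s}}\exp\left[\frac{tz^{2}-\overline{t\,w^{2}}+2z\overline{w}}{2s}\right]=\frac{\sqrt{2}}{\sqrt{2-ic}}\exp\left[\frac{ic(z^{2}+\overline{w}^{2})+4z\overline{w}}{2(2-ic)}\right],$$
I would simply verify the algebraic identity
$$\frac{ic(z^{2}+\overline{w}^{2})+4z\overline{w}}{2(2-ic)}=z\overline{w}+\frac{ic}{2(2-ic)}(z+\overline{w})^{2},$$
which comes down to the clean cancellation
$$\frac{ic}{2-ic}\,z\overline{w}+z\overline{w}=\frac{2}{2-ic}\,z\overline{w}.$$
The only mild subtlety, and the closest thing to an obstacle, is checking that the branch choices in $C_{A_c}$ cohere to yield $+1$ rather than $-1$; but because both $s$ and $s+t=1$ have positive real parts, no branch cut of the square root is crossed, and the computation is straightforward.
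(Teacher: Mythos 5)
Your proposal is correct and follows exactly the paper's route: the paper's proof is simply the one-line observation that $\varphi(A_c)=\bigl(\tfrac{2-ic}{2},\tfrac{ic}{2}\bigr)$ combined with Theorem~\ref{thmC}, and you have carried out the same computation with the details (the check $s+t=1$, the collapse of $C_{A_c}$ to $1$, and the kernel rearrangement) written out explicitly and correctly.
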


\begin{proof}
This is a consequence of Theorem~\ref{thmC} and the identity
$$\varphi(A_c)=\left(\frac{2-i c}{2},\frac{i c}{2}\right). \qquad\qquad\qedhere$$
\end{proof}

In the case of fractional Fourier transforms, the Bargmann transform takes a very complicated integral
transform on $L^2(\R)$ to an extremely simple operator on $F^2$. On the other hand, the Bargmann
transform takes the very simple dilation and chirp multiplication on $L^2(\R)$ to relatively complicated
integral operators on $F^2$. This is a good illustration of the usefulness of the Bargmann transform (and
its inverse): it can sometimes be used to simplify operators on $L^2(\R)$ and it can sometimes also be
used to simplify operators on $F^2$.


\begin{thebibliography}{99}
\bibitem{B0} V. Bargmann, On unitary ray representations of continuous groups, \textit{Ann. Math.}
\textbf{59} (1954), 1-46.
\bibitem{B1} V. Bargmann, On a Hilbert space of analytic functions and an associated integral
transform I, \textit{Comm. Pure Appl. Math}. \textbf{14} (1961), 187-214.
\bibitem{B2} V. Bargmann, On a Hilbert space of analytic functions and an associated integral
transform II, \textit{Comm. Pure Appl. Math}. \textbf{20} (1967), 1-101.
\bibitem{BM2} A. Bultheel and H. Mart\'inez, Recent developments in the theory of the
fractional Fourier transforms and linear canonical transforms. \textit{Bull. Belg. Math. Soc. Simon Stevin}
\textbf{13} (2007), 971-1005.
\bibitem{BC1} C. Berger and L. Coburn, Toeplitz operators and quantum mechanics, \textit{J. Funct. Anal.}
\textbf{68} (1986), 273-299.
\bibitem{BC2} C. Berger and L. Coburn, Toeplitz operators on the Segal-Bargmann space, \textit{Trans.
Amer. Math. Soc.} \textbf{301} (1987), 813-829.
\bibitem{BC3} C. Berger and L. Coburn, Heat flow and Berezin-Toeplitz estimates, \textit{Amer. J. Math.} \textbf{116} (1994), 563-590.
\bibitem{CLS} G. Cao, J. Li, M. Shen, B.D. Wick and L. Yan, A Boundedness Criterion for Singular Integral Operators of convolution type on the Fock Space. \textit{Adv. Math.} \textbf{363} (2020), 107001.
\bibitem{DZ} X. Dong and K. Zhu, The Fourier and Hilbert transforms under the Bargmann transform,
\textit{Complex Variables and Elliptic Equations}, \textbf{63} (2018), 517-531.
\bibitem{G} K. Gr\"{o}chenig, Foundations of Time-Frequency Analysis, Birkh\"{a}user, Boston, 2001.
\bibitem{HKOS} J. Healy, M. Kutay, H. Ozaktas, and J. Sheridan, Linear Canonical Transforms Theory
and Applications, Springer, New York, 2016.
\bibitem{Mackey} G. Mackey, Unitary representations of group extensions, \textit{Acta Math.}
\textbf{99} (1958), 265-311.
\bibitem{OZK} H. Ozaktas, Z. Zalevsky, and M.A. Kutay, The fractional Fourier transform, Wiley, Chichester, 2001.
\bibitem{PD2} S. Pei and J. Ding. Eigenfunctions of linear canonical transform. \textit{IEEE
Trans. Sig. Proc.}, \textbf{50} (2002), 11-26.
\bibitem{Seip} K. Seip, Density theorems for sampling and interpolation in the Bargmann-Fock space I,
\textit{J. Reine Angew. Math.} \textbf{429} (1992), 91-106.
\bibitem{SW} K. Seip and R. Wallsten, Density theorems for sampling and interpolation in the
Bargmann-Fock space, \textit{J. Reine Angew. Math.} \textbf{429} (1992), 107-113.
\bibitem{WW} B. Wick  and  S. Wu, Integral Operators on Fock-Sobolev Spaces via Multipliers on
Gauss-Sobolev Spaces, \textit{Integr. Equat. Oper. Theory} \textbf{94} (2022), no. 2,
Paper No. 22, 24 pp.
\bibitem{W} K.B. Wolf, Integral Transforms in Science and Engineering, Plenum Press, New York, 1979.
\bibitem{Zhu1} K. Zhu, Analysis on Fock Spaces, Springer, New York, 2012.
\bibitem{Zhu2} K. Zhu, Singular integral operators on the Fock space, \textit{Integr. Equat. Oper. Theory},
\textbf{81} (2015), 451-454.
\bibitem{Zhu3} K. Zhu, Towards a dictionary for the Bargmann transform, in: \textit{Handbook of Analytic Operator Theory}, Chapman and Hall/CRC, 2019.
\end{thebibliography}
\end{document}